\def\ls{\lesssim}
\def\fz{\infty}
\def\red{\color{red}}
\renewcommand{\r}{\right}
\newcommand{\lf}{\left}
\def\ls{\lesssim}
\def\supp{{\mathop\mathrm{\,supp\,}}}
\def\rr{{\mathbb R}}
\def\rn{{{\rr}^n}}
\newcommand{\wz}{\widetilde}
\newcommand{\cm}{{\mathcal M}}
\def\az{\alpha}
\def\lz{\lambda}
\def\dz{\delta}
\def\bz{\beta}
\def\fai{\varphi}
\def\gz{{\gamma}}
\def\tz{\theta}
\def\sz{\sigma}
\def\wz{\widetilde}
\def\ls{\lesssim}
\def\boz{\Omega}
\def\uc{{\varepsilon}}
\def\esup{\mathop\mathrm{\,ess\,sup\,}}
\def\hs{\hspace{0.3cm}}
\newtheorem{theorem}{Theorem}[section]
\newtheorem{lemma}[theorem]{Lemma}
\newtheorem{corollary}[theorem]{Corollary}
\newtheorem{assumption}[theorem]{Assumption}
\theoremstyle{definition}
\newtheorem{remark}[theorem]{Remark}
\newtheorem{definition}[theorem]{Definition}
\def\supp{{\mathop\mathrm{\,supp\,}}}
\def\diam{{\mathop\mathrm{diam\,}}}
\def\dist{{\mathop\mathrm{\,dist\,}}}
\def\loc{{\mathop\mathrm{loc}}}
\numberwithin{equation}{section}
\begin{document}

\title{\Large\bf Global Gradient Estimates for Dirichlet Problems of Elliptic Operators
with a BMO Anti-Symmetric Part
\footnotetext{\hspace{-0.35cm} 2020 {\it Mathematics Subject
Classification}. {Primary 35J25; Secondary 35J15, 42B35, 42B37.}
\endgraf{\it Key words and phrases}. elliptic operator, Dirichlet problem, NTA domain,
quasi-convex domain,  weak reverse H\"older inequality,
gradient estimate, Muckenhoupt weight.
\endgraf This work is supported by the National Natural Science Foundation
of China (Grant Nos. 11871254, 12071431, 11971058, 12071197 and 11871100)
and the National Key Research and Development Program of China
(Grant No.\ 2020YFA0712900).}}
\author{Sibei Yang, Dachun Yang
 and Wen Yuan\,\footnote{Corresponding author,
E-mail: \texttt{wenyuan@bnu.edu.cn}/{\red November 17, 2021}/Final version.}}
\date{ }
\maketitle

\vspace{-0.8cm}

\begin{center}
\begin{minipage}{13.5cm}\small
{{\bf Abstract.} Let $n\ge2$ and $\boz\subset\mathbb{R}^n$
be a bounded NTA domain. In this article, the authors investigate (weighted) global gradient estimates for
Dirichlet boundary value problems of second order elliptic equations of divergence form
with an elliptic symmetric part and a BMO anti-symmetric part in $\Omega$.
More precisely, for any given $p\in(2,\infty)$, the authors prove that a weak reverse H\"older
inequality with exponent $p$ implies the global $W^{1,p}$ estimate and
the global weighted $W^{1,q}$ estimate, with $q\in[2,p]$ and some Muckenhoupt weights,
of solutions to Dirichlet boundary value problems.
As applications, the authors establish some global gradient estimates for solutions to Dirichlet
boundary value problems of second order elliptic equations of divergence form with small
$\mathrm{BMO}$ symmetric part and small $\mathrm{BMO}$ anti-symmetric part, respectively,
on bounded Lipschitz domains, quasi-convex domains, Reifenberg flat domains, $C^1$
domains, or (semi-)convex domains, in weighted Lebesgue spaces.
Furthermore, as further applications, the authors obtain the global gradient estimate,
respectively, in (weighted) Lorentz spaces, (Lorentz--)Morrey spaces,
(Musielak--)Orlicz spaces, and variable Lebesgue spaces. Even on global gradient
estimates in Lebesgue spaces, the results obtained in this article improve the
known results via weakening the assumption on the coefficient matrix.}
\end{minipage}
\end{center}

\vspace{0.1cm}

\section{Introduction\label{s1}}

 It is well known that the research of global regularity estimates in various function spaces
for (non-)linear elliptic equations (or systems) in non-smooth domains
is one of the most interesting and important topics in partial differential equations
(see, for instance, \cite{amp18,aq02,b05,bw04,d96,dk10,dk18,g12,jk95,sh05a}
for the linear case, and \cite{ap15,b18,bd18,byz08,cp98,mp12,mp11,yzz19,zz14} for the non-linear case).
Moreover, the global regularity estimates for solutions to
elliptic boundary problems depend not only on the structure of equations,
the integrability of the right-hand side datum, and the properties of coefficients appearing
in equations, but also on the smooth property or
the geometric property of the boundary of domains
(see, for instance, \cite{amp18,aq02,bw04,dk12,dk18,g12,jlw10,k07,mp12,ycyy20}).

Motivated by \cite{g12,lp19,sssz12,sh05a,ycyy20}, in this article, we study
global gradient estimates in various function spaces for
Dirichlet boundary value problems of second order elliptic equations of divergence form
with an elliptic symmetric part and a BMO anti-symmetric part on non-smooth domains of $\rn$.
More precisely, let $n\ge2$ and $\boz\subset\mathbb{R}^n$
be a bounded non-tangentially accessible domain (for short,
NTA domain). For any given $p\in(2,\infty)$, using a real-variable argument,
we show that a weak reverse H\"older
inequality with exponent $p$ implies the global $W^{1,p}$ estimate and
the global weighted $W^{1,q}$ estimate, with $q\in[2,p]$ and some Muckenhoupt weights,
of solutions to Dirichlet boundary value problems.
As applications, we obtain some global gradient estimates for solutions to Dirichlet
boundary value problems of second order elliptic equations of divergence form with small
$\mathrm{BMO}$ symmetric part and small $\mathrm{BMO}$ anti-symmetric part, respectively,
on bounded Lipschitz domains, quasi-convex domains, Reifenberg flat domains, $C^1$
domains, or (semi-)convex domains, in the scale of weighted Lebesgue spaces.
Applying these weighted global estimates and some
technique from harmonic analysis, such as properties of Muckenhoupt weights, the interpolation theorem of
operators, and the extrapolation theorem, we further establish the global gradient estimate,
respectively, in (weighted) Lorentz spaces, (Lorentz--)Morrey spaces, (Musielak--)Orlicz
spaces, and variable Lebesgue spaces. Even on global gradient
estimates in Lebesgue spaces, the results obtained in this article improve the
corresponding results in \cite{amp18,bw04,jlw10,sh05a}
via weakening the assumption on the coefficient matrix.

To describe the main results of this article,
we first recall the notions of the Muckenhoupt weight class
and the reverse H\"older class (see, for instance, \cite{am07,g14,St93}).

\begin{definition}\label{d1.1}
Let $q\in[1,\fz)$. A non-negative and locally integrable function $\omega$ on $\rn$
is said to belong to the \emph{Muckenhoupt weight class} $A_q(\rn)$, denoted by $\omega\in A_q(\rn)$,
if, when $q\in(1,\fz)$,
\begin{equation*}
[\omega]_{A_q(\rn)}:=\sup_{B\subset\rn}\lf[\frac{1}{|B|}\int_B
\omega(x)\,dx\r]\lf\{\frac{1}{|B|}\int_B
[\omega(x)]^{-\frac{1}{q-1}}\,dx\r\}^{q-1}<\fz,
\end{equation*}
or
\begin{equation*}
[\omega]_{A_1(\rn)}:=\sup_{B\subset\rn}\lf[\frac{1}{|B|}\int_B \omega(x)\,dx\r]
\lf\{\esup_{y\in B}[\omega(y)]^{-1}\r\}<\fz,
\end{equation*}
where the suprema are taken over all balls $B$ of $\rn$.

Let $r\in(1,\fz]$. A non-negative and locally integrable function $\omega$ on $\rn$
is said to belong to the \emph{reverse H\"older class} $RH_r(\rn)$,
denoted by $\omega\in RH_r(\rn)$, if, when $r\in(1,\fz)$,
\begin{align*}
[\omega]_{RH_r(\rn)}:=\sup_{B\subset\rn}\lf\{\frac{1}
{|B|}\int_B [\omega(x)]^r\,dx\r\}^{\frac1r}\lf[\frac{1}{|B|}\int_B
\omega(x)\,dx\r]^{-1}<\fz,
\end{align*}
or
\begin{equation*}
[\omega]_{RH_\fz(\rn)}:=\sup_{B\subset\rn}\lf[\esup_{y\in
B}\omega(y)\r]\lf[\frac{1}{|B|}\int_B\omega(x)\,dx\r]^{-1} <\fz,
\end{equation*}
where the suprema are taken over all balls $B$ of $\rn$.
\end{definition}

Furthermore, we recall the definition of the so-called NTA domain introduced by Jerison and Kenig
\cite{jk82} (see also \cite{kt97,t97}) as follows.
We begin with recalling several notions. For any given $x\in\rn$ and measurable subset $E\subset\rn$,
let $\dist(x,E):=\inf\{|x-y|:\ y\in E\}$. Meanwhile, for any measurable subsets $E,\ F\subset\rn$,
let $\dist(E,F):=\inf\{|x-y|:\ x\in E,\,y\in F\}$ and $\diam(E):=\sup\{|x-y|:\ x,\,y\in E\}$.
Moreover, for any given $x\in\rn$ and $r\in(0,\fz)$, let $B(x,r):=\{y\in\rn:\ |y-x|<r\}$.

\begin{definition}\label{d1.2}
Let $n\ge2$, $\boz\subset\rn$ be a \emph{domain}
which means that $\boz$ is a connected open set, and $\boz^\complement:=\rn\backslash\boz$.
Denote by $\partial\boz$ the \emph{boundary} of $\boz$.
\begin{itemize}
\item[{\rm(i)}] Then the domain $\boz$ is said to satisfy the \emph{interior}
[resp., the \emph{exterior}] \emph{corkscrew condition}
 if there exist positive constants $R\in(0,\fz)$ and $\sz\in(0,1)$ such that,
for any $x\in\partial\boz$ and $r\in(0,R)$, there exists a point $x_0\in\boz$
[resp., $x_0\in\boz^\complement$], depending on $x$,
such that $B(x_0,\sz r)\subset\boz\cap B(x,r)$ [resp., $B(x_0,\sz r)\subset\boz^\complement\cap B(x,r)$].
\item[{\rm(ii)}] The domain $\boz$ is said to satisfy the \emph{Harnack chain condition} if there exist constants
$m_1\in(1,\fz)$ and $m_2\in(0,\fz)$ such that, for any $x_1,\,x_2\in\boz$ satisfying
$$M:=\frac{|x_1-x_2|}{\min\{\dist(x_1,\partial\boz),\dist(x_2,\partial\boz)\}}>1,
$$
there exists a chain $\{B_i\}_{i=1}^N$ of open Harnack balls,
$B_i\subset\boz$ for any $i\in\{1,\,\ldots,\,N\}$, that connects $x_1$ to $x_2$; namely,
$x_1\in B_1$, $x_2\in B_N$, $B_i\cap B_{i+1}\neq\emptyset$ for any $i\in\{1,\,\ldots,\,N-1\}$, and, for any
$i\in\{1,\,\ldots,\,N\}$,
$$m_1^{-1}\diam(B_i)\le\dist(B_i,\partial\boz)\le m_1\diam(B_i),
$$
where the integer $N$ satisfies $N\le m_2\log_2 M$.
\item[{\rm(iii)}] The domain $\boz$ is called a \emph{non-tangentially accessible domain} (for short,
NTA \emph{domain}) if $\boz$ satisfies the interior
and the exterior corkscrew conditions, and the Harnack chain condition.
\end{itemize}
\end{definition}

We point out that NTA domains include Lipschitz domains, Zygmund domains, quasi-spheres,
and some Reifenberg flat domains as special examples (see, for instance, \cite{jk82,kt97,t97}).

Let $n\ge2$ and $\boz$ be a bounded NTA domain in $\rn$.
Assume that $p\in[1,\fz)$ and $\omega\in A_q(\rn)$ with some $q\in[1,\fz)$.
Recall that the \emph{weighted Lebesgue space} $L^p_\omega(\Omega)$ is defined by setting
\begin{align}\label{1.1}
L^p_\omega(\Omega):=\lf\{f\ \text{is measurable on}\ \Omega: \
\|f\|_{L^p_\omega(\Omega)}:=\lf[\int_{\boz}
|f(x)|^p\omega(x)\,dx\r]^{\frac1p}<\fz\r\}.
\end{align}
Moreover, let
\begin{equation}\label{1.2}
L^p_\omega(\boz;\rn):=\lf\{\mathbf{f}:=(f_1,\,\ldots,\,f_n):\ \text{for any}
\ i\in\{1,\,\ldots,\,n\},\ f_i\in L^p_\omega(\boz)\r\}
\end{equation}
and
$$\|\mathbf{f}\|_{L^p_\omega(\boz;\rn)}:
=\sum_{i=1}^n\|f_i\|_{L^p_\omega(\boz)}.
$$
Denote by $W^{1,p}_\omega(\boz)$ the \emph{weighted Sobolev space on $\boz$} equipped
with the \emph{norm}
$$\|f\|_{W^{1,p}_\omega(\boz)}:=\|f\|_{L^p_\omega(\boz)}+\|\nabla f\|_{L^p_\omega
(\boz;\rn)},$$
where $\nabla f$ denotes the \emph{distributional
gradient} of $f$. Furthermore, $W^{1,p}_{0,\,\omega}(\boz)$ is defined to be
the \emph{closure} of $C^{\fz}_{\mathrm{c}} (\boz)$ in $W^{1,p}_\omega(\boz)$, where
$C^{\fz}_{\mathrm{c}}(\boz)$ denotes the set of all \emph{infinitely differentiable functions on
$\boz$ with compact support contained in $\boz$}.
In particular, when $\omega\equiv1$, the weighted spaces $L^p_\omega(\boz)$,
$W^{1,p}_{\omega}(\boz)$, and $W^{1,p}_{0,\,\omega}(\boz)$ are denoted simply, respectively,
by $L^p(\boz)$, $W^{1,p}(\boz)$, and $W^{1,p}_0(\boz)$, which are
just, respectively, the classical Lebesgue space
and the classical Sobolev spaces.

Let $\boz$ be a domain of $\rn$. Denote by $L^1_{\loc}(\boz)$ the \emph{set of all locally integrable functions on $\boz$}.

\begin{definition}\label{d1.3}
Let $\boz\subset\rn$ be a domain and $f\in L^1_\loc(\boz)$.
Then $f$ is said to belong to the \emph{space} $\mathrm{BMO}(\boz)$ if
$$\|f\|_{\mathrm{BMO}(\boz)}:=\sup_{B\subset\boz}\frac{1}{|B|}\int_{B}|f(x)-f_B|\,dx<\fz,
$$
where the supremum is taken over all balls $B\subset\boz$ and $f_B:=\frac{1}{|B|}\int_B f(y)\,dy$.
\end{definition}

For any given $x\in\boz$, let $a(x):=\{a_{ij}(x)\}_{i,j=1}^n$ denote
an $n\times n$ matrix with real-valued, bounded and measurable entries.
Then $a$ is said to satisfy the \emph{uniform ellipticity condition}
if there exists a positive constant $\mu_0\in(0,1]$ such that,
for any $x\in\boz$ and $\xi:=(\xi_1,\,\ldots,\,\xi_n)\in\rn$,
\begin{equation}\label{1.3}
\mu_0|\xi|^2\le\sum_{i,j=1}^na_{ij}(x)\xi_i\xi_j\le \mu_0^{-1}|\xi|^2.
\end{equation}

Recall that the matrix $b:=\{b_{ij}\}_{i,j=1}^n$ is said to be \emph{anti-symmetric}
if $b_{ij}=-b_{ji}$ for any $i,\,j\in\{1,\,\ldots,\,n\}$.
Throughout this article, we \emph{always assume} that the matrix
$A$ satisfies the following assumption.

\begin{assumption}\label{a1}
Let $\boz\subset\rn$ be a domain. Assume that, for any given $x\in\boz$, $A(x)$ is an $n\times n$ matrix satisfying that
$A(x)=a(x)+b(x)$, where the matrix
$a(x):=\{a_{ij}(x)\}_{i,j=1}^n$ is real-valued, symmetric, and measurable, and satisfies
the uniform ellipticity condition \eqref{1.3}, and the matrix
$b(x):=\{b_{ij}(x)\}_{i,j=1}^n$ is real-valued, anti-symmetric, and measurable, and satisfies
$b_{ij}\in\mathrm{BMO}(\boz)$ for any $i,\,j\in\{1,\,\ldots,\,n\}$.
\end{assumption}

\begin{remark}\label{r1.1}
Let $\boz\subset\rn$ be a domain and $A:=a+b$ satisfy Assumption \ref{a1}.
\begin{itemize}
\item[\rm(i)] By the assumption that $a$ satisfies \eqref{1.3}, we conclude that
$a\in L^\fz(\boz;\rr^{n^2})$, which, together with the facts that $L^\fz(\boz)\subset\mathrm{BMO}(\boz)$
and $b\in\mathrm{BMO}(\boz;\rr^{n^2})$, further implies that $A\in \mathrm{BMO}(\boz;\rr^{n^2})$.

\item[\rm(ii)] Via replacing $\boz$ by $\rn$ in Definition \ref{d1.3},
we obtain the definition of the space $\mathrm{BMO}(\rn)$.
Jones \cite{j80} proved that any given function $f\in\mathrm{BMO}(\boz)$
admits an extension to some $\wz{f}\in\mathrm{BMO}(\rn)$ if and only if the domain $\boz$
is a \emph{uniform domain} (namely, the domain satisfying the interior corkscrew condition
and the Harnack chain condition). Thus, if $\boz$ is an NTA domain, then, for any given $f\in\mathrm{BMO}(\boz)$,
there exists an $\wz{f}\in\mathrm{BMO}(\rn)$ such that
$$\wz{f}\Big|_\boz=f\quad\text{and}\quad \lf\|\wz{f}\r\|_{\mathrm{BMO}(\rn)}\le C
\|f\|_{\mathrm{BMO}(\boz)},
$$
where $C$ is a positive constant depending only on $\boz$ and $n$.

\item[\rm(iii)] By the assumptions that $a$ satisfies \eqref{1.3} and $b$ is anti-symmetric,
we conclude that, for any $x\in\boz$ and $\xi\in\rn$,
$$\lf(A(x)\xi\r)\cdot\xi=(a(x)\xi)\cdot\xi\ge\mu_0|\xi|^2.
$$
\end{itemize}
\end{remark}

Let $\boz\subset\rn$ be a bounded domain and the matrix $A$ satisfy Assumption
\ref{a1}. Assume that $p\in(1,\fz)$, $\omega\in A_q(\rn)$ with some $q\in[1,\fz)$,
and $\mathbf{f}\in L^p_\omega(\boz;\rn)$. Then a function $u$ is called a \emph{weak solution} of the
following \emph{weighted Dirichlet boundary value problem}
\begin{equation}\label{1.4}
\begin{cases}
-\mathrm{div}(A\nabla u)=\mathrm{div}(\mathbf{f})\ \ &\text{in}\ \ \boz,\\
u=0 \ \ &\text{on}\ \ \partial\boz
\end{cases}
\hspace{4cm}(D)_{p,\,\omega}
\end{equation}
if $u\in W^{1,p}_{0,\,\omega}(\boz)$ and, for any $\varphi\in C^\fz_\mathrm{c}(\boz)$,
\begin{equation}\label{1.5}
\int_{\boz}A(x)\nabla u(x)\cdot\nabla\varphi(x)\,dx=-\int_\boz\mathbf{f}(x)\cdot\nabla\varphi(x)\,dx.
\end{equation}
In particular, when $\omega\equiv1$,
the weighted Dirichlet problem $(D)_{p,\,\omega}$ is just the Dirichlet problem $(D)_{p}$.
The weighted Dirichlet problem $(D)_{p,\,\omega}$ is said to
be \emph{uniquely solvable} if, for any given $\mathbf{f}\in L^p_\omega(\boz;\rn)$,
there exists a \emph{unique} $u\in W^{1,p}_{0,\,\omega}(\boz)$
such that \eqref{1.5} holds true for any $\varphi\in C^\fz_{\mathrm{c}}(\boz)$.

Let $L:=-\mathrm{div}(A\nabla)$ with the matrix $A$ satisfying Assumption \ref{a1}.
The elliptic operator $L$ naturally arises in the study of the elliptic equation of the form
\begin{equation}\label{1.6}
-\Delta u+\mathbf{c}\cdot\nabla u=f
\end{equation}
(see, for instance, \cite{mv06,sssz12}) and the parabolic  equation
$$\frac{\partial u}{\partial t}-\Delta u+\mathbf{c}\cdot\nabla u=f,
$$
where the drift term $\mathbf{c}$ satisfies $\mathrm{div}(\mathbf{c})=0$.
By $\mathrm{div}(\mathbf{c})=0$, we know that $\mathbf{c}=\mathrm{div}(b)$
for some anti-symmetric tensor $b:=\{b_{ij}\}_{i,j=1}^n$. Therefore, the equation
\eqref{1.6} becomes
$$-\mathrm{div}(I-b)\nabla u=f,
$$
where $I$ denotes the unit matrix on $\rn$. In particular, Seregin et al. \cite{sssz12}
discovered that the well-known Moser iteration works for such an elliptic operator
$L$. Via the Moser iteration,  Seregin et al. \cite{sssz12} proved
the Liouville theorem and the Harnack inequality for solutions
to the equation \eqref{1.6} or its parabolic case. Moreover, Li and Pipher
\cite{lp19} studied the boundary behavior of solutions of
the equation $Lu=0$ in NTA domains. Furthermore,
Dong and Phan \cite{dp21} investigated the mixed-norm Sobolev estimate for solutions to non-stationary
Stokes systems with coefficients having unbounded anti-symmetric part in cylindrical domains.

\begin{remark}\label{r1.2}
Let $\boz\subset\rn$ be a bounded NTA domain and the matrix $A:=a+b$ satisfy Assumption \ref{a1}.
For any $u,\,v\in W^{1,2}_0(\boz)$, let
$$B[u,v]:=\int_\boz A(x)\nabla u(x)\cdot\nabla v(x)\,dx.
$$
From Remark \ref{r1.1}(iii), it follows that, for any $u\in W^{1,2}_0(\boz)$,
$$B[u,u]\ge\mu_0\|\nabla u\|_{L^2(\boz;\rn)}^2.$$
Moreover, it was showed in \cite[(2.11)]{lp19}, via using the div-curl lemma, that,
for any $u,\,v\in W^{1,2}_0(\boz)$,
$$|B[u,v]|\le C\|\nabla u\|_{L^2(\boz;\rn)}\|\nabla v\|_{L^2(\boz;\rn)},$$
where $C$ is a positive constant depending only on $A$ and $\boz$.
Thus, by the Lax--Milgram theorem (see, for instance, \cite[Theorem 5.8]{gt01}),
we conclude that the Dirichlet problem $(D)_2$ is uniquely solvable
and, for any given $\mathbf{f}\in L^2(\boz;\rn)$, the weak solution $u\in W^{1,2}_0(\boz)$
of $(D)_2$ satisfies that
$$\|\nabla u\|_{L^2(\boz;\rn)}\le \mu_0^{-1}\|\mathbf{f}\|_{L^2(\boz;\rn)},
$$
where $\mu_0$ is as in \eqref{1.3}.

Moreover, via an example given by Meyers \cite[Section 5]{m63} (see also \cite[p.\,1285]{bw04}),
we know that, for any $p\in(1,\fz)$ with $p\neq2$,
the Dirichlet problem $(D)_p$ may not be uniquely solvable,
even when the domain $\boz$ is smooth.
\end{remark}

Let $n\ge2$, $\boz\subset\rn$ be a bounded NTA domain, and the matrix $A$
satisfy Assumption \ref{a1}. Assume further that $A$ satisfies
the $(\dz,R)$-BMO condition (see Definition \ref{d2.1} below)
or $A$ belongs to the space $\mathrm{VMO}(\boz)$
(see, for instance, \cite{s75}). In this article, our aim is to establish
the weighted Calder\'on--Zygmund type estimates
\begin{equation}\label{1.7}
\|\nabla u\|_{L^p_\omega(\boz;\rn)}\le C\|\mathbf{f}\|_{L^p_\omega(\boz;\rn)}
\end{equation}
for the Dirichlet problem \eqref{1.4}, with $p\in(1,\fz)$ and $\omega\in A_q(\rn)$
for some $q\in[1,\fz)$, and then give their applications, where $C$ is a positive constant
independent of $u$ and $\mathbf{f}$.

Let $n\ge2$, $\boz\subset\rn$ be a bounded domain, and the matrix $A:=a+b$ satisfy Assumption \ref{a1}.
For the Dirichlet problem $(D)_p$, the estimate \eqref{1.7} with $p\in(1,\fz)$ and $\omega\equiv1$
was established by Di Fazio \cite{d96}, under the assumptions that $a\in\mathrm{VMO}(\rn)$, $b\equiv0$,
and $\partial\boz\in C^{1,1}$, which was weakened to $\partial\boz\in C^{1}$ by Auscher and Qafsaoui \cite{aq02}.
Moreover, for the Dirichlet problem $(D)_p$,  the estimate \eqref{1.7}
with $p\in(1,\fz)$ and $\omega\equiv1$ was obtained by Byun and Wang in \cite{b05,bw04},
under the assumptions that $a$ satisfies the $(\dz,R)$-BMO
condition for sufficiently small $\dz\in(0,\fz)$, $b\equiv0$, and $\boz$ is a bounded Lipschitz domain
with a small Lipschitz constant or a bounded Reifenberg flat domain (see, for instance, \cite{r60,t97}).
Furthermore, for the Dirichlet problem $(D)_p$ with $a$ having partial small $\mathrm{BMO}$ coefficients
and $b\equiv0$, the estimate \eqref{1.7} with $p\in(1,\fz)$ and $\omega\equiv1$
was studied, respectively, by Dong and Kim \cite{dk10}, and Krylov \cite{k07},
under the assumption that $\boz$ is a bounded Lipschitz domain with small Lipschitz constant.
Moreover, if $\boz$ is a bounded quasi-convex domain, $a$ satisfies the $(\dz,R)$-BMO
condition for sufficiently small $\dz\in(0,\fz)$, and $b\equiv0$,
the estimate \eqref{1.7} with $p\in(1,\fz)$ and $\omega\equiv1$ was
established by Jia et al. \cite{jlw10} for the Dirichlet problem $(D)_p$.
For the Dirichlet problem $(D)_p$ in a general Lipschitz domain $\boz$,
it was proved by Shen \cite{sh05a} that, if $a\in\mathrm{VMO}(\rn)$ and $b\equiv0$, then
\eqref{1.7} with $\omega\equiv1$ holds true for any given $p\in(\frac32-\uc,3+\uc)$ when $n\ge3$, or
$p\in(\frac43-\uc,4+\uc)$ when $n=2$, where $\uc\in(0,\fz)$ is a positive constant
depending on $\boz$. It is worth pointing out that, when $A:=I$ (the identity matrix)
in \eqref{1.7}, the range of $p$ obtained in \cite{sh05a} is even sharp for
general Lipschitz domains (see, for instance, \cite{jk95}).

For the weighted Dirichlet problem $(D)_{p,\,\omega}$ with $a$ having partial small
$\mathrm{BMO}$ coefficients and $b\equiv0$, \eqref{1.7} with $p\in(2,\fz)$
and $\omega\in A_{p/2}(\rn)$ was obtained by Byun and Palagachev \cite{bp14} under the assumption that
$\boz$ is a bounded Reifenberg flat domain. Furthermore,
for the problem $(D)_{p,\,\omega}$ with $a$ having partial small
$\mathrm{BMO}$ coefficients and $b\equiv0$, the estimate \eqref{1.7} with $p\in(1,\fz)$
and $\omega\in A_{p}(\rn)$ was established by Dong and Kim \cite{dk18} under the assumption that
$\boz$ is a bounded Reifenberg flat domain. For the problem $(D)_{p,\,\omega}$ with
$a$ having small $\mathrm{BMO}$ coefficients and $b\equiv0$, \eqref{1.7} with $p\in(1,\fz)$ and
$\omega\in A_{p}(\rn)$ was obtained by Adimurthi et al. \cite{amp18}
under the assumption that $\boz$ is a bounded Lipschitz domain with small Lipschitz constant.

Now, we state the main results of this article as follows.

\begin{theorem}\label{t1.1}
Let $n\ge2$, $\boz\subset\rn$ be a bounded $\mathrm{NTA}$ domain, the matrix $A$ satisfy Assumption \ref{a1},
and $p\in(2,\fz)$. Assume that there exist positive constants $C_0\in(0,\fz)$ and $r_0\in(0,\diam(\boz))$
such that, for any ball $B(x_0,r)$ having the property that $r\in(0,r_0/4)$
and either $x_0\in\partial\boz$ or $B(x_0,2r)\subset\boz$,
the weak reverse H\"older inequality
\begin{align}\label{1.8}
\lf[\frac{1}{|B_\boz(x_0,r)|}\int_{B_\boz(x_0,r)}|\nabla v(x)|^p\,dx\r]^{\frac1p}\le
C_0\lf[\frac{1}{|B_\boz(x_0,2r)|}\int_{B_\boz(x_0,2r)}
|\nabla v(x)|^{2}\,dx\r]^{\frac1{2}}
\end{align}
holds true for any function $v\in W^{1,2}(B_\boz(x_0,2r))$ satisfying
$\mathrm{div}(A\nabla v)=0$ in $B_\boz(x_0,2r)$, and
$v=0$ on $B(x_0,2r)\cap\partial\boz$ when $x_0\in\partial\boz$, where $B_\boz(x_0,r):=B(x_0,r)\cap\boz$.

\begin{itemize}
\item[\rm(i)] Then the weak solution $u\in W^{1,2}_0(\boz)$ of the Dirichlet problem
$(D)_p$ with $\mathbf{f}\in L^p(\boz;\rn)$ exists and, moreover, $u\in W^{1,p}_0(\boz)$ and
there exists a positive constant $C$, depending only on $n$, $p$, and $\boz$, such that
\begin{equation}\label{1.9}
\|\nabla u\|_{L^p(\boz;\rn)}\le C\|\mathbf{f}\|_{L^p(\boz;\rn)}.
\end{equation}

\item[\rm(ii)] Let $q\in[2,p]$, $q_0\in[1,\frac{q}{p'}]$, $r_0\in[(\frac{p}{q})',\fz]$,
and $\omega\in A_{q_0}(\rn)\cap RH_{r_0}(\rn)$.
Then a weak solution $u$ of the weighted Dirichlet problem
$(D)_{q,\,\omega}$ with $\mathbf{f}\in L^q_\omega(\boz;\rn)$
exists and, moreover, $u\in W^{1,q}_{0,\,\omega}(\boz)$ and there exists a positive constant $C$,
depending only on $n$, $p$, $q$, $[\omega]_{A_{q_0}(\rn)}$,
$[\omega]_{RH_{r_0}(\rn)}$, and $\boz$, such that
\begin{equation}\label{1.10}
\|\nabla u\|_{L^q_\omega(\boz;\rn)}\le C\|\mathbf{f}\|_{L^q_\omega(\boz;\rn)}.
\end{equation}
Here and thereafter, for any $s\in[1,\fz]$, $s'$ denotes its conjugate exponent, namely, $1/s + 1/s'= 1$.
\end{itemize}
\end{theorem}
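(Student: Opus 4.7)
The plan is to prove both parts by a real-variable argument in the spirit of Shen \cite{sh05a}, applied to the solution operator $S\colon \mathbf{f} \mapsto \nabla u$ of the Dirichlet problem. The $L^2$-boundedness of $S$ is already secured by Remark \ref{r1.2} (via Lax--Milgram and the div--curl trick from \cite{lp19}); the task is to bootstrap this to $L^p$ in part (i), and then to the weighted $L^q_\omega$ bound in part (ii), using only the weak reverse H\"older hypothesis \eqref{1.8}.

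For part (i), fix a ball $B(x_0,r)$ with $r<r_0/4$ satisfying either $B(x_0,2r)\subset\boz$ or $x_0\in\partial\boz$. On $B_\boz(x_0,2r)$ decompose the weak $W^{1,2}_0(\boz)$-solution as $u=v+w$, where $v$ solves $\mathrm{div}(A\nabla v)=0$ in $B_\boz(x_0,2r)$ with $v-u\in W^{1,2}_0(B_\boz(x_0,2r))$ (and $v=0$ on $B(x_0,2r)\cap\partial\boz$ in the boundary case), while $w:=u-v\in W^{1,2}_0(B_\boz(x_0,2r))$ satisfies $-\mathrm{div}(A\nabla w)=\mathrm{div}(\mathbf{f})$. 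Testing with $w$ itself exploits Remark \ref{r1.1}(iii) to cancel the anti-symmetric contribution and yields the energy bound
\begin{equation*}
\|\nabla w\|_{L^2(B_\boz(x_0,2r);\rn)}\ls\|\mathbf{f}\|_{L^2(B_\boz(x_0,2r);\rn)}.
\end{equation*}
Combined with \eqref{1.8} applied to $v$, this produces the local Shen-type comparison between the $L^p$-average of $\nabla u$ on $B_\boz(x_0,r)$ and the $L^2$-average of $\nabla u$ on $B_\boz(x_0,2r)$ plus an $\mathbf{f}$-term at the same scale. A Gehring-type self-improvement of \eqref{1.8} raises the exponent from $p$ to some $p+\uc$, so that Shen's real-variable lemma applies and gives \eqref{1.9}. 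Existence of $u\in W^{1,p}_0(\boz)$ then follows by approximating $\mathbf{f}\in L^p$ by $L^p\cap L^2$ data, using the a priori estimate, and invoking $W^{1,2}_0$-uniqueness.

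For part (ii), the same local comparison feeds into a weighted version of Shen's lemma. The range $q_0\in[1,q/p']$, $r_0\in[(p/q)',\fz]$ is exactly the range of weights for which the level-set/maximal-function argument remains valid when Lebesgue measure is replaced by $\omega\,dx$; equivalently, one may deduce (ii) from (i) via Rubio de Francia extrapolation along the pair $(q,\omega)$ in the stated range. The same approximation-plus-a priori-estimate argument produces $u\in W^{1,q}_{0,\omega}(\boz)$ and \eqref{1.10}.

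The principal obstacle is that the anti-symmetric part $b$ lies only in $\mathrm{BMO}$, not in $L^\fz$, so the bilinear form $B[\cdot,\cdot]$ is controlled only via the div--curl lemma. Consequently, Caccioppoli-style testing must be performed carefully: in the energy step one tests with $w$ itself so that the BMO term is annihilated by anti-symmetry, and boundary cutoffs at $\partial\boz$ must be produced using the NTA geometry together with the Jones-type $\mathrm{BMO}$-extension in Remark \ref{r1.1}(ii). Tracking the constants in the local comparison to ensure uniformity in the position of $B(x_0,r)$---in particular, the seamless transition between interior and boundary balls---is expected to be the bulk of the technical work.
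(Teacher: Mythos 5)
Your part (i) is essentially correct and amounts to the classical Caffarelli--Peral local decomposition, whereas the paper uses a global one: it cuts $\mathbf{f}$ by a smooth bump $\phi$ supported in $4B$, solves the global Dirichlet problem on $\boz$ with data $\phi\mathbf{f}$ to get $w$, and with $(1-\phi)\mathbf{f}$ to get $v$, noting that $v$ is automatically a global $W^{1,2}_0(\boz)$ function that is $A$-harmonic in $2B_\boz$ so that \eqref{1.8} applies. The global decomposition sidesteps having to set up the local Dirichlet problem in the rough subdomain $B_\boz(x_0,2r)$ (well-posedness of the bilinear form there, uniformity of constants in the ball, etc.), and it obtains the energy bound for $w$ from the already-known global $L^2$ estimate rather than a local Caccioppoli-type test. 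Both routes lead to the same two inequalities feeding into the Shen real-variable argument (Theorem \ref{t3.1}), so for part (i) this is a genuine but equivalent variant.

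For part (ii), however, there is a real gap. You propose to use ``the same local comparison'' (i.e.\ the decomposition with base exponent $p_1=2$) in a weighted version of Shen's lemma. Theorem \ref{t3.1} applied with $p_1=2$ requires $\omega\in A_{q/2}(\rn)\cap RH_s(\rn)$ with $s>(\frac{p_2}{q})'$; but the hypothesis of Theorem \ref{t1.1}(ii) only gives $\omega\in A_{q_0}(\rn)$ with $q_0\in[1,\frac{q}{p'}]$, and since $p>2$ implies $p'<2$, we have $\frac{q}{p'}>\frac{q}{2}$, so $A_{q_0}$ contains weights outside $A_{q/2}$. Correspondingly, $\mathbf{f}\in L^q_\omega(\boz;\rn)$ only lands in $L^{q/q_0}(\boz;\rn)$ with $q/q_0\ge p'$, which can be strictly below $L^2$, so your energy estimate for the corrector (which needs $\mathbf{f}\in L^2$ locally) may not even be available. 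The paper closes this gap by lowering the base exponent: it shows via duality (Lemma \ref{l3.1}) that the unweighted $W^{1,p+\uc_0}$ estimate of part (i) yields a $W^{1,(p+\uc_0)'}$ estimate, and then runs the decomposition with $p_1=(p+\uc_0)'<p'$ and $p_2=p+\uc_0$, for which the hypotheses $q_0\le\frac{q}{p'}$ and $r_0\ge(\frac pq)'$ slot exactly into the conditions of Theorem \ref{t3.1}. This duality step is a distinct idea, not a routine repetition, and it is missing from your outline. Also, the side remark that (ii) follows from (i) ``by Rubio de Francia extrapolation'' is not correct as stated: extrapolation requires a weighted estimate at a single exponent valid across a family of weights as its input, not a single unweighted estimate, so one cannot pass from the unweighted \eqref{1.9} to the weighted \eqref{1.10} by extrapolation alone.
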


We prove Theorem \ref{t1.1} via using a (weighted) real-variable argument
(see Theorem \ref{t3.1} below), which was essentially established
in \cite[Theorem 3.4]{sh07} (see also \cite{g12,g18,sh18,sh05a,ycyy20}) and inspired by \cite{cp98,w03}.
It is worth pointing out that a similar real-variable argument with the different motivation was
used in \cite{a07,am07}. Moreover, a different weighted real-variable argument
was obtained by Shen \cite[Theorem 2.1]{sh20}. Furthermore, the linear structure
of second order elliptic operators of divergence form and the properties of Muckenhoupt weights
are subtly used in the proof of Theorem \ref{t1.1}.

Let the $(\dz,R)$-$\mathrm{BMO}$ condition and the space $\mathrm{VMO}(\boz)$ be as in Definition \ref{d2.1} below.
As an application of Theorem \ref{t1.1}, we obtain the (weighted) global gradient estimates for solutions
to Dirichlet boundary problems on bounded Lipschitz domains as follows.

\begin{theorem}\label{t1.2}
Let $n\ge2$, $\boz\subset\rn$ be a bounded Lipschitz domain, and the matrix $A$ satisfy Assumption \ref{a1}.
\begin{itemize}
\item[\rm(i)] Then there exist positive constants $\uc_0,\ \dz_0\in(0,\fz)$,
depending only on $n$ and the Lipschitz constant of $\boz$, such that, for any given
$p\in((3+\uc_0)', 3+\uc_0)$ when $n\ge3$, or $p\in((4+\uc_0)', 4+\uc_0)$ when $n=2$,
if $A$ satisfies the $(\dz,R)$-$\mathrm{BMO}$
condition for some $\dz\in(0,\dz_0)$ and $R\in(0,\fz)$, or $A\in\mathrm{VMO}(\boz)$, then
the Dirichlet problem $(D)_{p}$ with $\mathbf{f}\in L^p(\boz;\rn)$ is uniquely solvable and
there exists a positive constant $C$, depending only on $n$, $p$, and the Lipschitz constant of $\boz$,
such that, for any weak solution $u$, $u\in W^{1,p}_0(\boz)$ and
\begin{equation}\label{1.11}
\|\nabla u\|_{L^p(\boz;\rn)}\le C\|\mathbf{f}\|_{L^p(\boz;\rn)}.
\end{equation}

\item[\rm(ii)] Let $\uc_0$ be as in (i) and $p_0:=3+\uc_0$ when $n\ge3$, and $p_0:=4+\uc_0$ when $n=2$.
Then, for any given $p\in(p_0',p_0)$ and any $\omega\in A_{\frac{p}{p_0'}}(\rn)\cap RH_{(\frac{p_0}{p})'}(\rn)$,
there exists a positive constant $\dz_0\in(0,\fz)$,
depending only on $n$, $p$, the Lipschitz constant of $\boz$, $[\omega]_{A_{\frac{p}{p_0'}}(\rn)}$,
and $[\omega]_{RH_{(\frac{p_0}{p})'}(\rn)}$,  such that, if
$A$ satisfies the $(\dz,R)$-$\mathrm{BMO}$ condition for some $\dz\in(0,\dz_0)$
and $R\in(0,\fz)$, or $A\in\mathrm{VMO}(\boz)$,
then the weighted Dirichlet problem $(D)_{p,\,\omega}$ with
$\mathbf{f}\in L^p_\omega(\boz;\rn)$ is uniquely solvable and there exists a positive constant
$C$, depending only on $n$, $p$,
$[\omega]_{A_{\frac{p}{p_0'}}(\rn)}$, $[\omega]_{RH_{(\frac{p_0}{p})'}(\rn)}$,
and the Lipschitz constant of $\boz$, such that,
for any weak solution $u$, $u\in W^{1,p}_{0,\,\omega}(\boz)$ and
\begin{equation}\label{1.12}
\|\nabla u\|_{L^p_\omega(\boz;\rn)}\le C\|\mathbf{f}\|_{L^p_\omega(\boz;\rn)}.
\end{equation}
\end{itemize}
\end{theorem}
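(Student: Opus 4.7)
The plan is to derive Theorem \ref{t1.2} from Theorem \ref{t1.1} by verifying the weak reverse H\"older inequality \eqref{1.8} for the Lipschitz domain $\boz$ at the sharp exponent $p_0:=3+\uc_0$ (or $p_0:=4+\uc_0$ when $n=2$). Once \eqref{1.8} holds at $p_0$, the range $p\in[2,p_0)$ of part (i) is immediate from Theorem \ref{t1.1}(i), and the range $p\in(p_0',2)$ is recovered by duality: the adjoint matrix $A^T=a-b$ satisfies Assumption \ref{a1} with the same ellipticity constant and the same small-$\mathrm{BMO}$ (or $\mathrm{VMO}$) hypothesis, so Theorem \ref{t1.1}(i) applied to the adjoint problem at the dual exponent $p'\in(2,p_0)$, together with the pairing
\begin{equation*}
\int_\boz \mathbf{g}(x)\cdot\nabla u(x)\,dx = \int_\boz \mathbf{f}(x)\cdot\nabla v(x)\,dx,
\end{equation*}
valid when $u$ solves $(D)_p$ with datum $\mathbf{f}$ and $v$ solves the adjoint problem with datum $\mathbf{g}$, yields \eqref{1.11} for $p\in(p_0',2)$. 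Uniqueness for $p\ne 2$ then follows by combining this estimate with the $L^2$ well-posedness from Remark \ref{r1.2}.

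To establish \eqref{1.8} at $p_0$, I would run a classical freezing-coefficients perturbation adapted to the anti-symmetric term. Fix a ball $B(x_0,2r)$ as in the hypothesis and let $v$ solve $\divz(A\nabla v)=0$ in $B_\boz(x_0,2r)$, with $v=0$ on $B(x_0,2r)\cap\pat\boz$ in the boundary case. Let $\bar A:=a_{B_\boz(x_0,2r)}+b_{B_\boz(x_0,2r)}=:\bar a+\bar b$ denote the matrix average, and let $w$ solve $\divz(\bar A\nabla w)=0$ in $B_\boz(x_0,2r)$ with the same Dirichlet data as $v$. The key observation is that $\bar b$ is a \emph{constant} anti-symmetric matrix, so
\begin{equation*}
\divz(\bar b\nabla w)=\sum_{i,j=1}^n \bar b_{ij}\,\pat_i\pat_j w\equiv 0,
\end{equation*}
because $\pat_i\pat_j w$ is symmetric in $i,j$. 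Hence $w$ is a weak solution of the symmetric constant-coefficient equation $\divz(\bar a\nabla w)=0$, which a linear change of variables diagonalizing $\bar a$ transforms into a harmonic function on a bounded Lipschitz domain whose Lipschitz character is controlled by that of $\boz$ and by $\mu_0$. Shen's sharp-range result for harmonic functions on Lipschitz domains \cite{sh05a} then supplies the reverse H\"older inequality for $\nabla w$ at exponent $p_0$.

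The perturbation closes by comparing $v$ and $w$: the difference satisfies $\divz(A\nabla(v-w))=\divz((\bar A-A)\nabla w)$ with vanishing Dirichlet data on $\pat B_\boz(x_0,2r)$, so the coercivity in Remark \ref{r1.1}(iii) combined with the div-curl bilinear bound in Remark \ref{r1.2} produces
\begin{equation*}
\|\nabla(v-w)\|_{L^2(B_\boz(x_0,2r);\rn)}\le C\|(A-\bar A)\nabla w\|_{L^2(B_\boz(x_0,2r);\rn)}.
\end{equation*}
Using the higher integrability of $\nabla w$ just obtained, together with the John--Nirenberg inequality for the $\mathrm{BMO}$ function $A-\bar A$, the right-hand side is controlled by $C\dz^\az$ times the natural $L^2$ average of $|\nabla v|$ on $B_\boz(x_0,2r)$ for some $\az\in(0,\fz)$. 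Choosing $\dz_0$ sufficiently small (which under the $\mathrm{VMO}$ hypothesis amounts to restricting $r$) absorbs the error term and delivers \eqref{1.8} for $v$ at exponent $p_0$, completing the reduction to Theorem \ref{t1.1} for part (i).

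For part (ii) the scheme is identical. For $p\in[2,p_0)$ the hypothesis of Theorem \ref{t1.1}(ii) is supplied by the same reverse H\"older inequality at $p_0$, and for $p\in(p_0',2)$ the duality step is repeated at the weighted level, using the standard fact that the adjoint weight $\omega^{1-p'}$ belongs to the matching $A_q\cap RH_r$ class on the $p'$ side (see, for instance, \cite{am07,g14}). The main obstacle I anticipate in this program is the perturbation estimate when $b$ is merely $\mathrm{BMO}$: one cannot invoke an $L^\fz$ bound on $A-\bar A$, so the required smallness must be extracted through the higher integrability of $\nabla w$ combined with John--Nirenberg rather than by a pointwise estimate. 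This is also why $\dz_0$ in part (ii) must be allowed to depend on the weight constants: the exponent shifts built into Theorem \ref{t1.1}(ii) have to be compensated by tightening $\dz$ enough that the higher-integrability margin on $\nabla w$ remains strictly inside $(p_0',p_0)$.
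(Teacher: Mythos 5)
The overall architecture you propose — prove a weak reverse H\"older inequality \eqref{1.8} at an exponent close to $p_0$ by a freezing-coefficients perturbation, then plug into Theorem \ref{t1.1}(i)/(ii), and cover $p<2$ by the duality argument with the adjoint matrix $A^\ast=a-b$ — is the same route as the paper. Your observation that freezing the entire matrix $\bar A=\bar a+\bar b$ still yields a constant-coefficient \emph{symmetric} equation, because $\mathrm{div}(\bar b\nabla w)=0$ when $\bar b$ is a constant anti-symmetric matrix, is correct and mirrors what the paper does in \eqref{4.4}--\eqref{4.5} by inserting $b_{B_\boz(x_0,2r)}$ for free. Your treatment of the $p<2$ range and the weighted range in (ii) via $\omega^{1-p'}$ duality is also what the paper does.

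The genuine gap is in how you close the perturbation. You take the energy identity in the form that puts the \emph{frozen-coefficient} gradient on the right,
$$\|\nabla(v-w)\|_{L^2(B_\boz(x_0,2r);\rn)}\ls\|(A-\bar A)\nabla w\|_{L^2(B_\boz(x_0,2r);\rn)},$$
and then propose to control the right-hand side via H\"older, John--Nirenberg on $A-\bar A$, and higher integrability of $\nabla w$. The problem is that Shen's result (Lemma \ref{l4.3}) is a reverse H\"older inequality from $B_\boz(x_0,r)$ to $B_\boz(x_0,2r)$: it gives $\nabla w\in L^{p_0}$ only on the smaller set $B_\boz(x_0,r)$. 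The function $w$ carries the raw $W^{1,2}$ boundary trace $v$ on $\partial B(x_0,2r)\cap\boz$, so $\nabla w$ is not higher-integrable in the annular region near $\partial B(x_0,2r)$. Thus the $L^{p_0}$ (or any $q>2$) norm of $\nabla w$ that the H\"older/John--Nirenberg step requires is taken over exactly the set where it is unavailable. Without a pointwise $L^\fz$ bound on $A-\bar A$ (which you correctly note is not at hand since $b$ is only $\mathrm{BMO}$), there is no way to control $\|(A-\bar A)\nabla w\|_{L^2(B_\boz(x_0,2r))}$ from this identity.

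The paper avoids this by using the dual form of the energy identity, which puts the \emph{variable-coefficient} gradient on the right:
$$\int_{B_\boz(x_0,2r)} a_0\nabla(u-v)\cdot\nabla(u-v)\,dx
=\int_{B_\boz(x_0,2r)}\lf(A_{B_\boz(x_0,2r)}-A\r)\nabla u\cdot\nabla(u-v)\,dx,$$
with $u$ the variable-coefficient solution defined on the \emph{larger} ball $B_\boz(x_0,4r)$. That is the key: $u$ solves on $B_\boz(x_0,4r)$, so Li--Pipher's reverse H\"older inequality for the BMO anti-symmetric operator (Lemma \ref{l4.1}) yields $\nabla u\in L^{2\wz{p}}(B_\boz(x_0,2r))$, controlled by $\fint_{B_\boz(x_0,4r)}|\nabla u|^2$. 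This supplies exactly the higher integrability your step needs, over the correct set, and lets John--Nirenberg deliver the small factor $\tz(r)$. You never invoke Lemma \ref{l4.1}, and your argument cannot succeed without it (or some substitute providing up-to-$\partial B(x_0,2r)$ higher integrability, which the $W^{1,2}$ boundary data rules out). The rest of your program is sound modulo this substitution.
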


Let $A:=a+b$ satisfy Assumption \ref{a1} and $p_0$ be as in Theorem \ref{t1.2}(ii).
The key to proving Theorem \ref{t1.2} is to show that
the weak reverse H\"older inequality \eqref{1.8} is valid for any
$p\in(2,p_0)$. To do this, we flexibly apply the real-variable argument established in Theorem \ref{t3.1} below,
the method of perturbation, the assumptions that $b\in\mathrm{BMO}(\boz;\rr^{n^2})$ and $b$ is anti-symmetric,
and the properties of Muckenhoupt weights.

\begin{remark}\label{r1.3}
Let $n\ge2$, $\boz\subset\rn$ be a bounded Lipschitz domain, and $A:=a+b$ satisfy Assumption \ref{a1}.
If $a\in\mathrm{VMO}(\boz)$ and $b\equiv0$, then
Theorem \ref{t1.2} in this case was essentially established by Shen \cite[Theorem C]{sh05a}.
Thus, Theorem \ref{t1.2} improves \cite[Theorem C]{sh05a} via weakening the condition
for the matrix $A$.
\end{remark}

Let the quasi-convex domain be as in Definition \ref{d2.2} below.
We further obtain the following (weighted) global gradient estimates for the
Dirichlet problems on quasi-convex domains by using Theorem \ref{t1.1}.

\begin{theorem}\label{t1.3}
Let $n\ge2$, $\boz\subset\rn$ be a bounded $\mathrm{NTA}$ domain, and $p\in(1,\fz)$.
Assume further that the matrix $A$ satisfies Assumption \ref{a1}, and
$\boz$ is a $(\dz,\sz,R)$ quasi-convex domain with some $\dz,\ \sz\in(0,1)$ and $R\in(0,\fz)$.
\begin{itemize}
\item[\rm(i)] Then there exists a positive constant $\dz_0\in(0,1)$,
depending only on $n$, $p$, and $\boz$, such that, if $\boz$ is a $(\dz,\sz,R)$ quasi-convex domain
and $A$ satisfies the $(\dz,R)$-$\mathrm{BMO}$
condition for some $\dz\in(0,\dz_0)$, $\sz\in(0,1)$, and $R\in(0,\fz)$, or $A\in\mathrm{VMO}(\boz)$, then
the Dirichlet problem $(D)_{p}$ with $\mathbf{f}\in L^p(\boz;\rn)$ is uniquely solvable and,
for any weak solution $u$ of the problem $(D)_{p}$, $u\in W^{1,p}_0(\boz)$ and
\begin{equation*}
\|\nabla u\|_{L^{p}(\boz;\rn)}\le C\|\mathbf{f}\|_{L^p(\boz;\rn)},
\end{equation*}
where $C$ is a positive constant depending only on $n$, $p$, and $\boz$.

\item[\rm(ii)] Let $\omega\in A_p(\rn)$. Then there exists a positive constant $\dz_0\in(0,1)$,
depending only on $n$, $p$, $\boz$, and $[\omega]_{A_p(\rn)}$,  such that, if
$\boz$ is a $(\dz,\sz,R)$ quasi-convex domain and $A$ satisfies the $(\dz,R)$-$\mathrm{BMO}$
condition for some $\dz\in(0,\dz_0)$, $\sz\in(0,1)$,
and $R\in(0,\fz)$, or $A\in\mathrm{VMO}(\boz)$,  then the weighted Dirichlet
problem $(D)_{p,\,\omega}$ with
$\mathbf{f}\in L^p_\omega(\boz;\rn)$ is uniquely solvable and there exists a positive constant
$C$, depending only on $n$, $p$, $[\omega]_{A_{p}(\rn)}$, and $\boz$, such that,
for any weak solution $u$, $u\in W^{1,p}_{0,\,\omega}(\boz)$ and
\begin{equation}\label{1.13}
\|\nabla u\|_{L^p_\omega(\boz;\rn)}\le C\|\mathbf{f}\|_{L^p_\omega(\boz;\rn)}.
\end{equation}
\end{itemize}
\end{theorem}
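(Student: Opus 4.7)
The plan is to deduce both parts of Theorem~\ref{t1.3} from Theorem~\ref{t1.1}, the essential work being the verification of the weak reverse H\"older inequality~\eqref{1.8} for arbitrarily large exponents once $\dz_0$ is chosen sufficiently small. Concretely, the central claim is: for any target $p_1\in(2,\fz)$, there exists $\dz_0=\dz_0(n,p_1,\boz)\in(0,1)$ such that, whenever $\boz$ is $(\dz,\sz,R)$ quasi-convex and $A$ satisfies the $(\dz,R)$-BMO condition with $\dz\in(0,\dz_0)$, or $A\in\mathrm{VMO}(\boz)$, the inequality~\eqref{1.8} holds with exponent $p_1$ for every local solution $v$ of $\divz(A\nabla v)=0$ that vanishes on the portion of $\partial\boz$ inside $B(x_0,2r)$.

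The proof of this claim is the heart of the argument and proceeds by a two-scale comparison. Freezing the symmetric part to its mean $\oz a$ over $B_\boz(x_0,r)$ and letting $w$ solve $\divz((\oz a+b)\nabla w)=0$ with boundary data $v$, one observes that $v-w$ satisfies $-\divz((\oz a+b)\nabla(v-w))=\divz((a-\oz a)\nabla v)$, so the smallness of $a-\oz a$ in BMO yields a quantitative decay for the energy norm of $\nabla(v-w)$. For $w$ itself, the div-curl cancellation built into Remark~\ref{r1.1}(iii) (as used in Remark~\ref{r1.2} and in~\cite{sssz12,lp19}) shows that the anti-symmetric part $b$ does not spoil the Caccioppoli estimate despite being merely BMO and not small; pairing this with a further comparison on the nearby truly convex region afforded by $(\dz,\sz,R)$ quasi-convexity and with the classical Lipschitz regularity for constant-coefficient problems on convex domains (compare~\cite{jlw10}) supplies a boundary Lipschitz bound on $\nabla w$. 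A standard iteration plus Gehring's lemma then upgrades the a priori $L^2$-integrability of $\nabla v$ to $L^{p_1}$.

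With the claim at hand, part~(i) for $p\in[2,\fz)$ is immediate from Theorem~\ref{t1.1}(i) applied with some $p_1>p$. For $p\in(1,2)$, we observe that $A^T=a-b$ still satisfies Assumption~\ref{a1} and inherits the same BMO or VMO hypothesis, so the $W^{1,p'}_0$ estimate holds for the adjoint Dirichlet problem; a standard duality pairing between $W^{1,p}_0(\boz)$ and $W^{1,p'}_0(\boz)$, combined with density of $L^2\cap L^p$ in $L^p$ and the $L^2$-solvability of Remark~\ref{r1.2}, yields the desired $W^{1,p}_0$ bound and thereby existence. Uniqueness in each $W^{1,p}_0(\boz)$ follows again from the $L^2$-energy identity of Remark~\ref{r1.2}. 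For part~(ii) with $p\in[2,\fz)$ and $\omega\in A_p(\rn)$, the openness of the Muckenhoupt class and the reverse H\"older property for $A_p$-weights supply $q_0\in[1,p)$ and $r_0\in(1,\fz]$, depending on $[\omega]_{A_p(\rn)}$, with $\omega\in A_{q_0}(\rn)\cap RH_{r_0}(\rn)$; choosing $p_1$ large enough that $q_0\le p/p_1'$ and $(p_1/p)'\le r_0$, which is possible since $p_1'\to1$ and $(p_1/p)'\to1$ as $p_1\to\fz$, and invoking Theorem~\ref{t1.1}(ii) with $q:=p$ delivers~\eqref{1.13}. The range $p\in(1,2)$ is handled by the same duality argument applied to $A^T$, using that the dual weight $\omega^{1-p'}$ lies in $A_{p'}(\rn)$.

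The principal obstacle lies in the two-scale comparison underlying the claim: since $b$ is merely BMO and carries no smallness, the customary perturbation-by-smallness technique is unavailable for the anti-symmetric part. One must instead invoke the div-curl cancellation to eliminate $b$ from the leading-order energy terms while simultaneously exploiting the small-BMO smallness of $a-\oz a$ and the quasi-convex boundary geometry to derive the boundary Lipschitz regularity of $\nabla w$. Quantifying how small $\dz_0$ must be as a function of the target exponent $p_1$, and, in part~(ii), as a function of the weight characteristic $[\omega]_{A_p(\rn)}$, constitutes the quantitative core of the argument.
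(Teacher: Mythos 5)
Your overall architecture (verify the weak reverse H\"older inequality~\eqref{1.8} for the exponent of interest, then feed it into Theorem~\ref{t1.1}, handle $p\in(1,2)$ by duality against $A^{\ast}$, and in part~(ii) exploit the self-improvement of $A_p$ to fit $\omega$ into $A_{q_0}\cap RH_{r_0}$ for a suitably large comparison exponent) does coincide with the paper's plan. The gap is in the central comparison step.

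You freeze only the symmetric part, comparing $v$ (solving $\divz(A\nabla v)=0$, $A=a+b$) with $w$ solving $\divz((\oz a+b)\nabla w)=0$, where $\oz a:=\fint a$ is constant but the BMO anti-symmetric part $b$ is left intact. You then assert that ``the div-curl cancellation \dots supplies a boundary Lipschitz bound on $\nabla w$.'' This does not follow. The div-curl cancellation (Remark~\ref{r1.1}(iii)/Remark~\ref{r1.2}) only guarantees coercivity and boundedness of the bilinear form, hence Caccioppoli-type energy estimates and, via Lemma~\ref{l4.1}, a reverse H\"older inequality with \emph{some} exponent slightly above $2$ depending on $\|b\|_{\mathrm{BMO}(\boz)}$. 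It does not upgrade this to Lipschitz (or to $L^{p_1}$ for arbitrarily large $p_1$): $\oz a+b$ is not a constant-coefficient matrix, and the boundary Lipschitz estimate of Banerjee--Lewis that underlies the paper's Lemma~\ref{l5.1} requires \emph{constant} coefficients so that one can transport to the Laplacian on the convex hull afforded by the quasi-convexity and the geometric Lemma~\ref{l5.3}. To remove $b$ from the frozen equation one must perturb in that variable as well, and this second perturbation is exactly where the paper's device enters: it compares $u$ directly with the solution of the constant-coefficient symmetric problem $\divz(a_0\nabla\cdot)=0$, $a_0=\fint a$, and then, in the energy identity, adds the term $\int b_{B}\nabla u\cdot\nabla(u-v)\,dx$, which vanishes by the divergence theorem because $b_{B}$ is a constant anti-symmetric matrix. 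The net error is then $\|A-A_{B}\|$, which is small precisely because the $(\dz,R)$-$\mathrm{BMO}$ condition is imposed on the whole matrix $A$.

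A related issue is your framing that ``$b$ is merely BMO and carries no smallness, so the customary perturbation-by-smallness is unavailable for the anti-symmetric part.'' Under the hypotheses of Theorem~\ref{t1.3} (or with $A\in\mathrm{VMO}(\boz)$), the $(\dz,R)$-$\mathrm{BMO}$ condition is on $A$ itself, and by (anti-)symmetrizing the entries one sees that both $a$ and $b$ inherit small local BMO oscillation. So the oscillation $b-b_{B}$ is small, and could in principle be perturbed away exactly as you perturb $a-\oz a$; what cannot be absorbed by smallness is the \emph{constant} average $b_{B}$, and that is what the divergence-theorem cancellation disposes of. Your proposal never addresses $b_{B}$, and the claimed regularity for $w$ substitutes for the missing argument without justification. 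The remainder of your outline---invoking Theorem~\ref{t1.1}(i) once~\eqref{1.8} is known, the duality argument for $p<2$, and the use of Lemma~\ref{l3.2} to place $\omega$ in the required $A_{q_0}\cap RH_{r_0}$ before applying Theorem~\ref{t1.1}(ii)---is sound and matches the paper's Section~5.
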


Assume that $p\in[2,\fz)$. To prove Theorem \ref{t1.3} via using Theorem \ref{t1.1}, we need to prove that
there exists a $\dz_0\in(0,1)$, depending on $p$ and $\boz$, such that,
if $\boz$ is a bounded $(\dz,\,\sz,\,R)$ quasi-convex domain with some $\dz\in(0,\dz_0)$,
$\sz\in(0,1)$, and $R\in(0,\fz)$, then the weak reverse H\"older inequality \eqref{1.8} is valid for
the exponent $p$. To this end, we adequately use the real-variable argument obtained in Theorem \ref{t3.1} below,
the method of perturbation, and the geometric properties of quasi-convex domains
(see Lemma \ref{l5.3} below).

Let the (semi-)convex domain be as in Remark \ref{r2.3}(i) below.
As a corollary of Theorem \ref{t1.3}, we have the following conclusion.

\begin{corollary}\label{c1.1}
Let $n\ge2$, $\boz\subset\rn$ be a bounded $C^1$ or (semi-)convex domain,
$p\in(1,\fz)$, and $\omega\in A_p(\rn)$. Assume that the matrix $A$ satisfies Assumption \ref{a1}.
Then there exists a positive constant $\dz_0\in(0,\fz)$,
depending only on $n$, $p$, $\boz$, and $[\omega]_{A_p(\rn)}$, such that, if
$A$ satisfies the $(\dz,R)$-$\mathrm{BMO}$ condition for some $\dz\in(0,\dz_0)$
and $R\in(0,\fz)$, or $A\in\mathrm{VMO}(\boz)$, then the weighted Dirichlet problem $(D)_{p,\,\omega}$ with
$\mathbf{f}\in L^p_\omega(\boz;\rn)$ is uniquely solvable and there exists a positive constant
$C$, depending only on $n$, $p$, $[\omega]_{A_p(\rn)}$, and $\boz$, such that,
for any weak solution $u$, $u\in W^{1,p}_{0,\,\omega}(\boz)$ and
$$\|\nabla u\|_{L^p_\omega(\boz;\rn)}\le C\|\mathbf{f}\|_{L^p_\omega(\boz;\rn)}.$$
\end{corollary}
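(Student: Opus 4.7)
The plan is to deduce Corollary \ref{c1.1} directly from Theorem \ref{t1.3}(ii) by showing that, for any bounded $C^1$ domain or bounded (semi-)convex domain $\boz$ and for any prescribed $\dz_1\in(0,1)$, there exist $\dz\in(0,\dz_1)$, $\sz\in(0,1)$, and $R\in(0,\fz)$ such that $\boz$ is a $(\dz,\sz,R)$ quasi-convex domain in the sense of Definition \ref{d2.2}. Once this geometric identification is in hand, given $p\in(1,\fz)$ and $\omega\in A_p(\rn)$, I would first apply Theorem \ref{t1.3}(ii) to obtain the threshold $\dz_0\in(0,1)$ (depending on $n$, $p$, $\boz$, and $[\omega]_{A_p(\rn)}$) and then take $\dz_1:=\dz_0$ in the geometric identification to realize $\boz$ as a $(\dz,\sz,R)$ quasi-convex domain for some admissible parameters. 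The conclusion on the unique solvability of $(D)_{p,\,\omega}$ and the weighted gradient estimate then follows immediately from Theorem \ref{t1.3}(ii).

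For the geometric identification, I would proceed as follows. If $\boz$ is a bounded $C^1$ domain, then, by the uniform continuity of the outer unit normal on $\partial\boz$, for any $\dz\in(0,1)$ one can choose $R\in(0,\fz)$ sufficiently small so that, for every $x_0\in\partial\boz$ and every $r\in(0,R)$, the portion $\partial\boz\cap B(x_0,r)$ lies in a strip of width $\dz r$ about a suitable hyperplane through $x_0$; this is precisely the Reifenberg-type flatness condition that enters the definition of $(\dz,\sz,R)$ quasi-convexity, with $\sz$ determined by a standard interior corkscrew parameter for $C^1$ domains. If $\boz$ is a bounded (semi-)convex domain, then at each boundary point there is a supporting hyperplane (or a local $C^{1,1}$-graph representation in the semi-convex case), which similarly yields the requisite flatness at all sufficiently small scales; this identification is standard and is essentially recorded in Remark \ref{r2.3}(i) of the paper. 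In either case the key point is that $\dz$ in the quasi-convexity parameters can be made arbitrarily small by shrinking $R$, while keeping $\sz$ bounded below.

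With the geometric identification established, the remainder of the argument is routine. Given $\dz_0$ produced by Theorem \ref{t1.3}(ii), choose $R$ small enough to guarantee $\dz<\dz_0$, and fix a corresponding $\sz\in(0,1)$. Under the hypothesis that $A$ satisfies the $(\dz,R)$-$\mathrm{BMO}$ condition (with this same $\dz$, possibly after further shrinking) or that $A\in\mathrm{VMO}(\boz)$ (in which case the $(\dz,R)$-$\mathrm{BMO}$ condition holds for every $\dz>0$ with a suitable $R=R(\dz)$), Theorem \ref{t1.3}(ii) applies and yields both the unique solvability of $(D)_{p,\,\omega}$ and the estimate $\|\nabla u\|_{L^p_\omega(\boz;\rn)}\le C\|\mathbf{f}\|_{L^p_\omega(\boz;\rn)}$ with a constant depending only on $n$, $p$, $[\omega]_{A_p(\rn)}$, and $\boz$.

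The only delicate point, and hence the main (though essentially expository) obstacle, is making sure the dependence of $\dz$ and $R$ in the quasi-convexity description is compatible with the threshold $\dz_0$ produced by Theorem \ref{t1.3}(ii), together with the compatibility of the $\mathrm{VMO}$ hypothesis with the requirement that $A$ satisfies the $(\dz,R)$-$\mathrm{BMO}$ condition for sufficiently small $\dz$; both are classical and will be handled by simply invoking the respective definitions and the uniform continuity of the mean oscillations of $\mathrm{VMO}$ functions at small scales.
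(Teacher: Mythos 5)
Your proposal follows the same route as the paper: establish that bounded $C^1$ domains and bounded (semi-)convex domains are $(\dz,\sz,R)$ quasi-convex for arbitrarily small $\dz$ (at the cost of shrinking $R$, with $\sz$ kept uniformly bounded below), and then apply Theorem \ref{t1.3}(ii) once its threshold $\dz_0$ is known. The paper packages the geometric identification in Remark \ref{r2.3}(iii) and Lemma \ref{l5.4}, and your argument reproduces the same facts with the same logic of first fixing $\boz$, then producing $\dz_0$ from Theorem \ref{t1.3}(ii), and only afterwards shrinking $R$.

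One small inaccuracy, immaterial to the overall structure but worth correcting: semi-convex domains do \emph{not} in general admit local $C^{1,1}$-graph representations. A semi-convex domain is, by definition, a Lipschitz domain satisfying a uniform exterior ball condition (see Remark \ref{r2.3}(i)); convex polytopes, for instance, are semi-convex but have boundaries with corners. The mechanism the paper actually uses (Lemma \ref{l5.4}) is the exterior ball: at scale $r\ll R_0$ an exterior ball of radius $R_0$ touching $x_0\in\partial\boz$ forces $B(x_0,r)\cap\boz$ to lie in a half space shifted inward by at most $O(r^2/R_0)=o(r)$, which, via the equivalent characterization of quasi-convexity in Remark \ref{r2.2}(iii), yields $(\dz,\sz,R)$ quasi-convexity with $\dz$ as small as desired. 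Replacing your $C^{1,1}$ aside by this exterior-ball observation makes the geometric identification correct. The $C^1$ case via uniform continuity of the normal (hence Reifenberg flatness at small scales, hence quasi-convexity) is fine as you wrote it, and matches Remark \ref{r2.3}(iii).
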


Moreover, let the Reifenberg flat domain be as in Remark \ref{r2.3}(ii) below.
By Theorem \ref{t1.3} and Remark \ref{r2.3}(ii), we have the following corollary.

\begin{corollary}\label{c1.2}
Let $n\ge2$, $\boz\subset\rn$ be a bounded $(\dz,R)$-Reifenberg flat domain
with some $\dz\in(0,1)$ and $R\in(0,\fz)$, $p\in(1,\fz)$,
and $\omega\in A_p(\rn)$. Assume that the matrix $A$ satisfies Assumption \ref{a1}.
Then there exists a positive constant $\dz_0\in(0,1)$,
depending only on $n$, $p$, $\boz$, and $[\omega]_{A_p(\rn)}$, such that, if
$\boz$ is a bounded $(\dz,R)$-Reifenberg flat domain and $A$ satisfies the $(\dz,R)$-$\mathrm{BMO}$
condition for some $\dz\in(0,\dz_0)$ and $R\in(0,\fz)$, or $A\in\mathrm{VMO}(\boz)$,
then the weighted Dirichlet problem $(D)_{p,\,\omega}$ with
$\mathbf{f}\in L^p_\omega(\boz;\rn)$ is uniquely solvable and there exists a positive constant
$C$, depending only on $n$, $p$, $[\omega]_{A_{p}(\rn)}$, and $\boz$, such that,
for any weak solution $u$, $u\in W^{1,p}_{0,\,\omega}(\boz)$ and
\begin{equation*}
\|\nabla u\|_{L^p_\omega(\boz;\rn)}\le C\|\mathbf{f}\|_{L^p_\omega(\boz;\rn)}.
\end{equation*}
\end{corollary}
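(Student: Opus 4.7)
The plan is to deduce Corollary \ref{c1.2} as a direct consequence of Theorem \ref{t1.3}, using Remark \ref{r2.3}(ii) as the bridge between the Reifenberg flat and quasi-convex categories of domains. First I would recall that, by the standard theory (see \cite{kt97,t97}), a bounded $(\delta,R)$-Reifenberg flat domain with $\delta$ sufficiently small (say $\delta<\delta_*$ for some universal $\delta_*\in(0,1)$ depending only on $n$) satisfies the interior and exterior corkscrew conditions together with the Harnack chain condition, so that $\boz$ is a bounded NTA domain in the sense of Definition \ref{d1.2}. This places $\boz$ inside the hypotheses needed to invoke Theorem \ref{t1.3}.

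Next, the key structural input is Remark \ref{r2.3}(ii), which asserts that every bounded $(\delta,R)$-Reifenberg flat domain is a $(\delta,\sigma,R)$ quasi-convex domain in the sense of Definition \ref{d2.2} for a suitable $\sigma\in(0,1)$ (depending only on $n$), with the same flatness parameter $\delta$. In other words, the Reifenberg flatness bound controls the quasi-convexity defect. Thus, given $p\in(1,\fz)$ and $\omega\in A_p(\rn)$, if I take $\delta_0\in(0,1)$ to be the threshold furnished by Theorem \ref{t1.3}(ii) (applied with the present $n$, $p$, $\boz$, and $[\omega]_{A_p(\rn)}$) and further shrink it by $\min\{\delta_0,\delta_*\}$, then any Reifenberg flat domain with parameter $\delta\in(0,\delta_0)$ is automatically a $(\delta,\sigma,R)$ quasi-convex NTA domain falling within the scope of Theorem \ref{t1.3}(ii).

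With these observations, the conclusion follows immediately: for either the matrix condition $(\dz,R)$-$\mathrm{BMO}$ with $\delta\in(0,\delta_0)$ or $A\in\mathrm{VMO}(\boz)$, Theorem \ref{t1.3}(ii) yields that the weighted Dirichlet problem $(D)_{p,\,\omega}$ is uniquely solvable for any $\mathbf{f}\in L^p_\omega(\boz;\rn)$ and that the corresponding weak solution $u$ belongs to $W^{1,p}_{0,\,\omega}(\boz)$ with
\begin{equation*}
\|\nabla u\|_{L^p_\omega(\boz;\rn)}\le C\|\mathbf{f}\|_{L^p_\omega(\boz;\rn)},
\end{equation*}
where $C$ depends only on $n$, $p$, $[\omega]_{A_p(\rn)}$, and $\boz$.

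The only subtle point, and hence the main obstacle to the writing, is the bookkeeping of the smallness parameter: one must verify that the single $\delta$ appearing in both the Reifenberg flatness assumption and in the $(\dz,R)$-$\mathrm{BMO}$ assumption may be taken below a common threshold $\delta_0$ furnished jointly by Theorem \ref{t1.3}(ii) and by Remark \ref{r2.3}(ii). This is essentially automatic because both thresholds depend on the same data $n,p,\boz,[\omega]_{A_p(\rn)}$, so I simply take their minimum; no further analytic argument is needed.
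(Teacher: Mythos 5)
Your proposal is correct and follows exactly the same route as the paper: the paper simply remarks, just before stating Corollary \ref{c1.2}, that it is a consequence of Theorem \ref{t1.3} together with Remark \ref{r2.3}(ii), which records that a bounded $(\dz,R)$-Reifenberg flat domain is a $(\dz,\sigma,R)$ quasi-convex domain with $\sigma=(1-\dz)/2$ and, by Kenig--Toro, an NTA domain for $\dz$ sufficiently small. Your additional remark on shrinking $\dz_0$ so that the same threshold serves both the NTA requirement and the quasi-convexity input to Theorem \ref{t1.3}(ii) is the right bookkeeping and is implicit in the paper's phrasing.
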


\begin{remark}\label{r1.4}
Let $n\ge2$ and $\boz\subset\rn$ be a bounded $(\dz,\sz,R)$ quasi-convex domain
with some $\dz,\,\sz\in(0,1)$ and $R\in(0,\fz)$.
Assume that the matrix $A:=a+b$ satisfies Assumption \ref{a1}.
\begin{itemize}
\item[\rm(i)] If $a$ satisfies the $(\dz,R)$-$\mathrm{BMO}$
condition for some sufficiently small $\dz\in(0,\fz)$ and $b\equiv0$,
then Theorem \ref{t1.3}(i) in this case was established by Jia et al. in \cite[Theorem 1.1]{jlw10}.
Thus, Theorem \ref{t1.3}(i) improves \cite[Theorem 1.1]{jlw10} via weakening the condition for the matrix $A$.
Moreover, even when $b\equiv0$, the conclusion of Theorem \ref{t1.3}(ii) in this case
is also new.

\item[\rm(ii)] When $a$ satisfies the $(\dz,R)$-$\mathrm{BMO}$
condition for some sufficiently small $\dz\in(0,\fz)$, $b\equiv0$, and $\omega\equiv1$,
Corollary \ref{c1.2} in this case was obtained by Byun and Wang in \cite[Theorem 1.5]{bw04}.
Therefore, even when $\omega\equiv1$, Corollary \ref{c1.2} in this case also improves \cite[Theorem 1.5]{bw04}
via weakening the assumption on $A$.

Furthermore, we point out that the approach used in this article to establish the
global gradient estimates is different from that used in \cite{b05,bw04,jlw10}.
Indeed, the global estimates were obtained in \cite{b05,bw04,jlw10} by using an approximation
argument, the modified Vitali covering lemma, and a compactness method.
However, in this article, we establish the (weighted) global estimates via using
a (weighted) real-variable argument (see Theorem \ref{t3.1} below),
the method of perturbation, and the geometric properties of quasi-convex domains.

\item[\rm(iii)] We point out that, even when $\boz$ is a bounded convex domain and $\omega\equiv1$,
the conclusion of Corollary \ref{c1.1} in this case is also new.
\end{itemize}
\end{remark}

Applying the weighted global regularity estimates obtained in Theorems \ref{t1.2}(ii)
and \ref{t1.3}(ii), and some tools from harmonic analysis, such as the properties
of Muckenhoupt weights, the interpolation theorem of operators,
and the Rubio de Francia extrapolation theorem,
we obtain the global gradient estimates for the Dirichlet problem \eqref{1.4},
respectively, in (weighted) Lorentz spaces, (Lorentz--)Morrey spaces,
(Musielak--)Orlicz spaces, and variable Lebesgue spaces, which have independent interests
and are presented in Section \ref{s6} below.
It is worth pointing out that the approach used in this
article to establish the global estimates in both Orlicz spaces
and variable Lebesgue spaces is quite different from that used in \cite{bow14,byz08}.
In \cite{bow14,byz08}, the global estimates in variable Lebesgue spaces or in Orlicz spaces
were established via the so-called ``maximum function free technique".
However, in this article, we obtain the global gradient estimates in both Orlicz spaces and
variable Lebesgue spaces by simply using weighted global estimates in
Theorems \ref{t1.2}(ii) and \ref{t1.3}(ii), and the Rubio de Francia extrapolation theorem.
Furthermore, we point out that the extrapolation theorem used in this article
is also valid for the boundary value problem studied in
\cite{bow14,byz08} and independent of the boundary value condition
and the considered equation.

Moreover, the global estimates in Orlicz spaces were obtained in \cite{jlw07} through an approximation
argument, the modified Vitali covering lemma, and a compactness method.
However, in this article, the global gradient estimates in Orlicz spaces
are obtained as corollaries of both the weighted norm inequality in
Theorem \ref{t1.3}(ii) and the extrapolation theorem.

The remainder of this article is organized as follows.

In Section \ref{s2}, we present several notions on the $(\dz,R)$-$\mathrm{BMO}$
condition, the space $\mathrm{VMO}(\boz)$, and several domains, and clarify their
relations. We then prove Theorems \ref{t1.1}, and \ref{t1.2}, and \ref{t1.3}, and
Corollary \ref{c1.1}, respectively, in Sections \ref{s3}, and \ref{s4}, and \ref{s5}.
Several applications of Theorems \ref{t1.2} and \ref{t1.3} are given in Section \ref{s6}.

Finally, we make some conventions on notation.
Throughout this article, we always denote by $C$ a \emph{positive constant} which is
independent of the main parameters, but it may vary from line to
line. We also use $C_{(\gz,\,\bz,\,\ldots)}$ or $c_{(\gz,\,\bz,\,\ldots)}$
to denote a  \emph{positive constant} depending on the indicated parameters $\gz,$ $\bz$,
$\ldots$. The \emph{symbol} $f\ls g$ means that $f\le Cg$. If $f\ls
g$ and $g\ls f$, then we write $f\sim g$. If $f\le Cg$ and $g=h$ or $g\le h$, we then write $f\ls g\sim h$
or $f\ls g\ls h$, \emph{rather than} $f\ls g=h$ or $f\ls g\le h$.
For each ball $B:=B(x_B,r_B)$ of $\rn$, with some $x_B\in\rn$ and
$r_B\in (0,\fz)$, and $\az\in(0,\fz)$, let $\az B:=B(x_B,\az r_B)$;
furthermore, denote the set $B(x,r)\cap\boz$ by $B_\boz(x,r)$, and the set $(\az B)\cap\boz$ by $\az B_\boz$.
For any subset $E$ of $\rn$, we denote the \emph{set} $\rn\setminus E$ by $E^\complement$,
and its \emph{characteristic function} by $\mathbf{1}_{E}$ .
For any $\omega\in A_p(\rn)$ with $p\in[1,\fz)$, and any measurable set $E\subset\rn$,
let $\omega(E):=\int_E\omega(x)\,dx$. For any given $q\in[1,\fz]$, we denote by $q'$
its \emph{conjugate exponent}, namely, $1/q + 1/q'= 1$.
Finally, for any measurable set $E\subset\rn$, $\omega\in A_q(\rn)$ with some $q\in[1,\fz)$,
and $f\in L^1(E)$, we denote the integral $\int_E|f(x)|\omega(x)\,dx$
simply by $\int_E|f|\omega\,dx$ and, when $|E|<\fz$, we use the notation
$$\fint_E fdx:=\frac{1}{|E|}\int_Ef(x)dx.$$

\section{Several notions\label{s2}}

In this section, we present the definitions of the $(\dz,R)$-$\mathrm{BMO}$ condition,
the space $\mathrm{VMO}(\boz)$, and several domains including quasi-convex domains,
(semi-)convex domains, and Reifenberg flat domains. Furthermore, we also clarify the relations between NTA domains,
Lipschitz domains, quasi-convex domains, Reifenberg flat domains, and semi-convex domains.

First, we recall the notions of the $(\dz,R)$-$\mathrm{BMO}$ condition and
the space $\mathrm{VMO}(\boz)$ as follows (see, for instance, \cite{bw05,bw04,s75}).

\begin{definition}\label{d2.1}
Let $\boz\subset\rn$ be a domain and $R,\,\dz\in(0,\fz)$.
\begin{itemize}
\item[{\rm(i)}] A function $f\in L^1_{\loc}(\boz)$ is said to satisfy
the \emph{$(\dz,R)$-$\mathrm{BMO}$ condition} if
\begin{equation*}
\|f\|_{\ast,\,R}:=\sup_{B(x,r)\subset\boz,\,r\in(0,R)}\frac{1}{|B(x,r)|}
\int_{B(x,r)}|f(y)-f_{B(x,r)}|\,dy\le\dz,
\end{equation*}
where the supremum is taken over all balls $B(x,r)\subset\boz$ with $r\in(0,R)$.
Furthermore, $f$ is said to belong to the \emph{space} $\mathrm{VMO}(\boz)$ if $f$ satisfies the
$(\dz,R)$-$\mathrm{BMO}$ condition for some $\dz,\,R\in(0,\fz)$, and
$$\lim_{r\to0^{+}}\sup_{B(x,r)\subset\boz}\frac{1}{|B(x,r)|}\int_{B(x,r)}|f(y)-f_{B(x,r)}|\,dy=0,
$$
where $r\to0^{+}$ means $r\in(0,\fz)$ and $r\to0$.

\item[{\rm(ii)}] A matrix $A:=\{a_{ij}\}_{i,j=1}^n$ is said to
satisfy the \emph{$(\dz,R)$-$\mathrm{BMO}$ condition} [resp., $A\in\mathrm{VMO}(\boz)$]
if, for any $i,\,j\in\{1,\,\ldots,\,n\}$, $a_{ij}$
satisfies the $(\dz,R)$-$\mathrm{BMO}$ condition [resp., $a_{ij}\in\mathrm{VMO}(\boz)$].
\end{itemize}
\end{definition}

\begin{remark}\label{r2.1}
Let $\boz\subset\rn$ be a domain and $\dz\in(0,\fz)$. If $f\in\mathrm{BMO}(\boz)$ and $\|f\|_{\mathrm{BMO}(\boz)}\le\dz$,
then $f$ satisfies the $(\dz,R)$-$\mathrm{BMO}$ condition for any $R\in(0,\fz)$.
Moreover, if $f\in\mathrm{VMO}(\boz)$, then $f$ satisfies the $(\gz,R)$-$\mathrm{BMO}$
condition for any $\gz\in(0,\fz)$ and some $R\in(0,\fz)$.
\end{remark}

Now, we recall the notion of quasi-convex domains as follows.
Let $E_1,\,E_2\subset\rn$ be non-empty measurable subsets. Then the \emph{Hausdorff distance} between
$E_1$ and $E_2$ is defined by setting
$$d_H(E_1,E_2):=\max\lf\{\sup_{x\in E_1}\inf_{y\in E_2}|x-y|,\ \sup_{y\in E_2}\inf_{x\in E_1}|x-y|\r\}.
$$

\begin{definition}\label{d2.2}
Let $\boz\subset\rn$ be a domain, $\dz,\ \sz\in(0,1)$, and $R\in(0,\fz)$.
Then $\boz$ is called a \emph{$(\dz,\,\sz,\,R)$ quasi-convex domain} if,
for any $x\in\partial\boz$ and $r\in(0,R]$,
\begin{itemize}
\item[\rm(a)] there exists an $x_0\in\boz$, depending on $x$,
such that $B(x_0,\sz r)\subset\boz\cap B(x,r)$;
\item[\rm(b)] there exists a convex domain $V:=V(x,r)$, depending on $x$ and $r$,
such that $B(x,r)\cap\boz\subset V$ and $d_H(\partial(B(x,r)\cap\boz),\partial V)\le\dz r$.
\end{itemize}
\end{definition}

\begin{remark}\label{r2.2}
\begin{itemize}
\item[\rm(i)] The notion of quasi-convex domains was introduced by Jia et al. \cite{jlw10}
to study the global regularity of second order elliptic equations.
Roughly speaking, a quasi-convex domain is a domain satisfying that the local boundary
is close to be convex at small scales.
It is easy to see that, if $\boz$ is a convex domain, then $\boz$ is a $(\dz,\,\sz,\,R)$ quasi-convex domain
for any $\dz\in(0,1)$, some $\sz\in(0,1)$, and some $R\in(0,\fz)$.
\item[\rm(ii)] We may \emph{always} assume that the convex domain $V$ in
Definition \ref{d2.2}(b) is the convex hull of $B(x,r)\cap\boz$,
which is the smallest convex domain containing $B(x,r)\cap\boz$
(see \cite[Theorem 3.1]{jlw10} and \cite[Remark 1.2]{z19} for more details).
\item[\rm(iii)] It was showed in \cite[Theorem 3.10]{jlw10}
that Definition \ref{d2.2}(b) can be replaced by the following condition:
\begin{itemize}
\item[\rm(c)] For any $x\in\partial\boz$ and $r\in(0,R]$,
there exist an $(n-1)$-dimensional plane $L(x,r)$ containing $x$,
a choice of the unit normal vector to $L(x,r)$, denoted by $\boldsymbol{\nu}_{x,r}$,
and the half space
$$H(x,r):=\{y+t\boldsymbol{\nu}_{x,t}:\ y\in L(x,t),\ t\in[-\dz r,\fz)\}
$$
such that
$$\boz\cap B(x,r)\subset H(x,r)\cap B(x,r).
$$
\end{itemize}

More precisely, it was proved in \cite[Theorem 3.10]{jlw10} that, if
$\boz$ is a domain satisfying the assumptions (a) and (c) with some $\dz,\ \sz\in(0,1)$
and $R\in(0,\fz)$, then $\boz$ is a $(\dz_1,\,\sz,\,R)$ quasi-convex domain
with $\dz_1:=8\dz/\sz$.
\end{itemize}
\end{remark}

In the following remark, we recall the notions of
semi-convex domains and Reifenberg flat domains,
and then clarify the relations between NTA domains,
Lipschitz domains, quasi-convex domains, Reifenberg
flat domains, and semi-convex domains.

\begin{remark}\label{r2.3}
\begin{enumerate}
\item[(i)] A set $E\subset\rn$ is said to satisfy an \emph{exterior ball condition} at $x\in\partial E$
if there exist a $\mathbf{v}\in S^{n-1}$ and an $r\in(0,\fz)$ such that
\begin{equation}\label{2.1}
B(x+r\mathbf{v},r)\subset(\rn\setminus E),
\end{equation}
where $S^{n-1}$ denotes the  \emph{unit sphere} of $\rn$.
For such an $x\in \partial E$, let
$$r(x):=\sup\lf\{r\in(0,\fz):\ \eqref{2.1} \ \text{holds true for some}\ v\in S^{n-1}\r\}.
$$
A set $E$ is said to satisfy a \emph{uniform exterior ball condition} (for short,
UEBC) with radius $r\in(0,\fz]$ if
\begin{equation}\label{2.2}
\inf_{x\in\partial E}r(x)\ge r,
\end{equation}
and the value $r$ in \eqref{2.2} is referred to the UEBC \emph{constant}.
A set $E$ is said to satisfy a UEBC if there exists an $r\in(0,\fz]$ such that $E$ satisfies
the uniform exterior ball condition with radius $r$. Moreover, the largest positive constant $r$
as above is called the \emph{uniform ball constant} of $E$.

It is known that, for any open set $\boz\subset\rn$ with compact boundary,
$\boz$ is a \emph{Lipschitz domain satisfying a} UEBC if and only if $\boz$
is a \emph{semi-convex domain} in $\rn$ (see, for instance, \cite[Theorem 2.5]{mmmy10} or
\cite[Theorem 3.9]{mmy10}). Moreover, more equivalent characterizations of
semi-convex domains are given in \cite{mmmy10,mmy10}.

It is worth pointing out that, if $\boz\subset\rn$ is convex, then $\boz$ satisfies
a UEBC with the uniform ball constant $\fz$ (see, for instance, \cite{e58}).
Thus, convex domains of $\rn$ are semi-convex domains
(see, for instance, \cite{mmmy10,mmy10,y16,yyy18}). Moreover,
(semi-)convex domains are special cases of Lipschitz domains.
More precisely,
$$\text{class of convex domains}\subsetneqq\text{class of semi-convex domains}
\subsetneqq\text{class of Lipschitz domains}.
$$

\item[\rm(ii)] Let $n\ge2$, $\dz\in(0,1)$, and $R\in(0,\fz)$.
A domain $\boz\subset\rn$ is called a $(\dz,R)$-\emph{Reifenberg
flat domain} if, for any $x_0\in\partial\boz$ and $r\in(0,R]$, there exists
a system of coordinates, $\{y_1,\,\ldots,\,y_n\}$, which may depend on $x_0$ and $r$,
such that, in this coordinate system, $x_0=\mathbf{0}$ and
\begin{equation*}
B(\mathbf{0},r)\cap\{y\in\rn:\ y_n>\dz r\}\subset B(\mathbf{0},r)\cap\boz\subset
B(\mathbf{0},r)\cap\{y\in\rn:\ y_n>-\dz r\},
\end{equation*}
where $\mathbf{0}$ denotes the \emph{origin} of $\rr^{n}$.
The Reifenberg flat domain was introduced by Reifenberg in \cite{r60},
which naturally appears in the theory of minimal surfaces and free boundary problems.
A typical example of Reifenberg flat domains is the well-known Van Koch snowflake
(see, for instance, \cite{t97}).
In recent years, boundary value problems of elliptic or parabolic equations
on Reifenberg flat domains have been widely concerned and studied
(see, for instance, \cite{b18,bd17a,bd17,bow14,bw04,byz08,dk18,mp12,z16,z16a}).

Moreover, for any given $\dz\in(0,1)$ and $R\in(0,\fz)$,
a $(\dz,R)$-Reifenberg flat domain is a $(\dz,\sz,R)$ quasi-convex
domain with  $\sz:=\frac{1-\dz}{2}$ (see, for instance, \cite{jlw10}).
However, a quasi-convex domain may not be a Reifenberg flat domain.
Indeed, let
$$\boz:=\lf\{(x_1,x_2)\in\rr^2:\ 1>x_2>|x_1|\r\}.$$
Then $\boz$ is convex and hence a quasi-convex domain,
but $\boz$ is not a Reifenberg flat domain.
Thus,
$$\text{class of Reifenberg flat domains}\ \subsetneqq\ \text{class of quasi-convex domains}.$$
Furthermore, it was showed by Kenig and Toro \cite{kt97}
that, if $\dz$ is sufficiently small, then $(\dz,R)$-Reifenberg flat domains are also NTA domains.
\item[\rm(iii)] By the facts that Lipschitz domains with small Lipschitz constants are Reifenberg
flat domains and that $C^1$ domains are Lipschitz domains with small Lipschitz constants (see, for instance, \cite{t97}),
we conclude that $C^1$ domains are $(\dz,\sz,R)$ quasi-convex
domains with any $\dz\in(0,\dz_0)$, some $\sz\in(0,1)$, and some $R\in(0,\fz)$,
where $\dz_0\in(0,1)$ is a positive constant depending only on $\boz$.

Let $\boz\subset\rn$ be a bounded semi-convex domain. Then $\boz$ is a $(\dz,\sz,R)$ quasi-convex domain for any
$\dz\in(0,\dz_0)$, some $\sz\in(0,1)$, and some $R\in(0,\fz)$,
where $\dz_0\in(0,1)$ is a positive constant depending only on $\boz$
(see Lemma \ref{l5.4} below).

\item[\rm(iv)] On NTA domains, quasi-convex domains,
Reifenberg flat domains, Lipschitz domains, $C^1$ domains, and (semi-)convex domains,
we have the following relations.
\begin{enumerate}
\item[\rm(iv)$_1$]
\begin{align*}
\text{class of $C^1$ domains}\ &\subsetneqq\ \text{class of Lipschitz domains with small Lipschitz constants}\\
&\subsetneqq\ \text{class of Lipschitz domains}\\
&\subsetneqq\ \text{class of NTA domains};
\end{align*}
$$\text{class of (semi-)convex domains}\ \subsetneqq\ \text{class of Lipschitz domains}.$$

\item[\rm(iv)$_2$]
\begin{align*}
\text{class of $C^1$ domains}\ &\subsetneqq\ \text{class of Lipschitz domains with small Lipschitz constants}\\
 &\subsetneqq\ \text{class of Reifenberg flat domains}\\
 &\subsetneqq\ \text{class of quasi-convex domains};
\end{align*}
$$\text{class of (semi-)convex domains}\ \subsetneqq\ \text{class of quasi-convex domains}.$$

\item[\rm(iv)$_3$] Reifenberg flat domains or quasi-convex domains may not be Lipschitz domains,
and generally Lipschitz domains may also
not be Reifenberg flat domains (see, for instance, \cite{t97}).
Moreover, (semi-)convex domains may not be Lipschitz domains with
small Lipschitz constants, or Reifenberg flat domains.
\end{enumerate}
\end{enumerate}
\end{remark}

\section{Proof of Theorem \ref{t1.1}\label{s3}}

 In this section, we prove Theorem \ref{t1.1} via using a real-variable argument for
(weighted) $L^p(\boz)$ estimates,
which is inspired by the work of Caffarelli and Peral \cite{cp98} (see also \cite{w03}).
When $\boz$ is a bounded Lipschitz domain in $\rn$,
the conclusion of Theorem \ref{t3.1} was essentially established in \cite[Theorem 3.4]{sh07} (see also
\cite[Theorems 2.1 and 2.2]{g12}, \cite[Theorem 4.2.6]{sh18},
\cite[Theorem 3.3]{sh05a}, and \cite[Theorem 3.1]{ycyy20}). It is worth pointing out that
the proofs of \cite[Theorem 3.4]{sh07}, \cite[Theorem 4.2.6]{sh18}, and \cite[Theorem 3.1]{ycyy20}
are also valid in the case of bounded NTA domains. Thus, we omit the proof of Theorem \ref{t3.1} here.
Furthermore, we also mention that a similar argument with a different motivation was established in \cite{a07,am07}.
Moreover, a different weighted real-variable argument was obtained by Shen \cite[Theorem 2.1]{sh20}.

\begin{theorem}\label{t3.1}
Let $n\ge2$, $\boz\subset\rn$ be a bounded $\mathrm{NTA}$ domain,
$p_1,\,p_2\in[1,\fz)$ satisfy $p_2>p_1$,
$F\in L^{p_1}(\boz)$, and $f\in L^q(\boz)$ with some $q\in(p_1,p_2)$.
Assume that, for any ball $B:=B(x_B,r_B)\subset\rn$ having the property
that $|B|\le \bz_1|\boz|$ and either $2B\subset\boz$ or $x_B\in\partial\boz$,
there exist two measurable functions $F_B$ and $R_B$ on $2B$
such that $|F|\le|F_B|+|R_B|$ on $2B\cap\boz$,
\begin{align}\label{3.1}
\lf(\fint_{2B_\boz}|R_B|^{p_2}\,dx\r)^{\frac1{p_2}}
\le C_1\lf[\lf(\fint_{\bz_2 B_\boz}|F|^{p_1}\,dx\r)^{\frac1{p_1}}
+\sup_{\wz{B}\supset B}\lf(\fint_{\wz{B}_\boz}|f|^{p_1}\,dx\r)^{\frac1{p_1}}\r],
\end{align}
and
\begin{align}\label{3.2}
\lf(\fint_{2B_\boz}|F_B|^{p_1}\,dx\r)^{\frac1{p_1}}
\le \uc\lf(\fint_{\bz_2 B_\boz}|F|^{p_1}\,dx\r)^{\frac1{p_1}}
+C_2\sup_{\wz{B}\supset B}\lf(\fint_{\wz{B}_\boz}|f|^{p_1}\,dx\r)^{\frac1{p_1}},
\end{align}
where $C_1,\,C_2,\,\uc$, and $\bz_1<1<\bz_2$ are positive constants independent
of $F,\,f,\,R_B,\,F_B$, and $B$, and the suprema are taken over all balls $\wz{B}\supset B$.

Then, for any $\omega\in A_{q/p_1}(\rn)\cap RH_s(\rn)$
with $s\in((\frac{p_2}{q})',\fz]$, there exists a positive constant $\uc_0$,
depending only on $C_1,\,C_2,\,n,\,p_1,\,p_2,\,q,\,\bz_1,\,\bz_2$, $[\omega]_{A_{q/p_1}(\rn)}$, and
$[\omega]_{RH_s(\rn)}$, such that, if $\uc\in[0,\uc_0)$, then
\begin{align*}
\lf[\frac{1}{\omega(\boz)}\int_{\boz}|F|^q\omega\,dx\r]^{\frac1q}
\le C\lf\{\lf(\frac{1}{|\boz|}\int_{\boz}|F|^{p_1}\,dx\r)^{\frac1{p_1}}
+\lf[\frac{1}{\omega(\boz)}\int_{\boz}|f|^{q}
\omega\,dx\r]^{\frac{1}{q}}\r\},
\end{align*}
where $C$ is a positive constant depending only on $C_1$,
$C_2$, $n$, $p_1,\,p_2$, $q$, $\bz_1,\,\bz_2$,
$[\omega]_{A_{q/p_1}(\rn)}$, and $[\omega]_{RH_s(\rn)}$.
\end{theorem}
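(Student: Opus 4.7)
The plan is to prove Theorem \ref{t3.1} via a weighted good-$\lambda$ inequality built on a Calder\'on--Zygmund decomposition of level sets of a localized maximal operator, following the strategy of Caffarelli--Peral \cite{cp98} and Shen \cite{sh07,sh18}. First I would introduce the restricted maximal operator
$$\cm_\boz(g)(x):=\sup_{B\ni x}\fint_{B_\boz}|g(y)|\,dy\qquad(x\in\boz),$$
which is bounded on $L^{q/p_1}_\omega(\boz)$ precisely because $\omega\in A_{q/p_1}(\rn)$. After normalizing $\|F\|_{L^{p_1}(\boz)}$ and the sup of $\cm_\boz(|f|^{p_1})^{1/p_1}$ over $\boz$, the desired inequality reduces to a level-set estimate for the distribution function of $F$ against $\omega$. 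For a sufficiently large threshold $\lz_0$, and each $\lz>\lz_0$, I would decompose the open set $E_\lz:=\{x\in\boz:\cm_\boz(|F|^{p_1})(x)>\lz^{p_1}\}$ via a Whitney-type covering adapted to NTA geometry (whose existence rests on the corkscrew and Harnack chain conditions) into an almost-disjoint family of balls $\{B_i\}$ with $|B_i|\le\bz_1|\boz|$, each either satisfying $2B_i\subset\boz$ or centered on $\paz\boz$.

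On each such $B_i$ I would apply the hypothesis to write $|F|\le|F_{B_i}|+|R_{B_i}|$ on $2B_i\cap\boz$. Chebyshev combined with \eqref{3.1} gives
$$\lf|\lf\{x\in 2B_i\cap\boz:|R_{B_i}(x)|>N\lz/2\r\}\r|\ls\frac{1}{(N\lz)^{p_2}}\int_{2B_i\cap\boz}|R_{B_i}|^{p_2}\,dx,$$
which, after using the Calder\'on--Zygmund stopping condition on $\bz_2 B_i$ (guaranteeing $\fint_{\bz_2 B_{i,\boz}}|F|^{p_1}\ls\lz^{p_1}$) and incorporating the maximal function of $f$, decays like $N^{-p_2}$. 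The estimate \eqref{3.2} together with the smallness of $\uc$ controls the analogous set for $F_{B_i}$ by $\uc^{p_1}|B_i|$ plus an $f$-term. Summing over $i$ and choosing $N$ large first, then $\uc$ small, produces the unweighted good-$\lz$ estimate
$$\lf|\{|F|>N\lz\}\cap\boz\r|\le\gz|E_\lz|+\lf|\lf\{\cm_\boz(|f|^{p_1})^{1/p_1}>\gz\lz\r\}\cap\boz\r|$$
with $\gz$ as small as desired.

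The final step is to promote this Lebesgue-measure inequality to the weighted setting. The assumption $\omega\in A_{q/p_1}(\rn)$ converts density bounds on each $B_i$ into weighted density bounds $\omega(E\cap B_i)/\omega(B_i)\le C\gz^\tau$ with some $\tau>0$ (via the $A_\fz$ self-improvement), while the $RH_s(\rn)$ hypothesis with $s>(p_2/q)'$ -- equivalently $qs'<p_2$ -- is exactly what is needed for H\"older's inequality against $\omega$ on the $p_2$-Chebyshev estimate to be absorbed by the gain from the higher integrability in \eqref{3.1}. Integrating the resulting weighted good-$\lz$ inequality against $\lz^{q-1}\,d\lz$ and applying the weighted $L^{q/p_1}_\omega$-boundedness of $\cm_\boz$ to the $f$-term then yields the claimed estimate. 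The main obstacle I anticipate is precisely this weighted absorption step: one has to track carefully that the interval $s\in((p_2/q)',\fz]$ is sharp for the argument, and that the NTA-adapted Whitney decomposition and the boundary-centered stopping balls interact correctly with the reverse H\"older condition. This delicate interplay between the exponents $p_1<q<p_2$ and the weight class $A_{q/p_1}\cap RH_s$ is what makes the real-variable result nontrivial, even though the underlying good-$\lz$ philosophy is classical.
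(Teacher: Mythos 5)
The paper does not prove Theorem \ref{t3.1} itself; it cites \cite[Theorem 3.4]{sh07}, \cite[Theorem 4.2.6]{sh18}, and \cite[Theorem 3.1]{ycyy20}, all of which run exactly the Caffarelli--Peral/Shen good-$\lambda$ argument you outline (NTA-adapted Whitney/Calder\'on--Zygmund decomposition of the level sets of $\cm_\boz(|F|^{p_1})$, Chebyshev at exponent $p_2$ on $R_B$ against the stopping-time bound, smallness of $\uc$ for $F_B$, $A_\fz$ density comparison ball-by-ball, and closing with the $A_{q/p_1}$-boundedness of $\cm_\boz$ on $L^{q/p_1}_\omega$), so your approach matches the paper's intended proof. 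One small point worth sharpening: the $A_\fz$ density exponent you call $\tau$ is not ``some $\tau>0$ coming from $A_{q/p_1}$'' but is quantitatively $1/s'$ coming from $\omega\in RH_s$, since $\omega\in RH_s$ is equivalent to $\omega(E)/\omega(Q)\ls(|E|/|Q|)^{1/s'}$ for $E\subset Q$; the absorption of the $N^q$ factor in the good-$\lambda$ iteration by the Chebyshev gain $N^{-p_2}$ then requires $N^q\cdot N^{-p_2/s'}<1$, i.e.\ $qs'<p_2$, which is precisely your range $s>(p_2/q)'$, so the reverse-H\"older hypothesis enters through the density exponent rather than through a separate H\"older-against-$\omega$ step.
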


To show Theorem \ref{t1.1} via using Theorem \ref{t3.1},
we need the following auxiliary conclusion.

\begin{lemma}\label{l3.1}
Let $n\ge2$, $\boz\subset\rn$ be a bounded $\mathrm{NTA}$ domain,
$p\in(1,\fz)$, and $p'\in(1,\fz)$ be given by $1/p+1/p'=1$.
Assume that, if the matrix $A$ satisfies Assumption \ref{a1} and $\mathbf{f}\in L^p(\boz;\rn)$, then
the weak solution $u\in W^{1,p}_0(\boz)$ of the Dirichlet problem
\begin{equation}\label{3.3}
\begin{cases}
-\mathrm{div}(A\nabla u)=\mathrm{div}(\mathbf{f})\ \ & \text{in}\ \ \boz,\\
u=0\ \ & \text{on}\ \ \partial\boz
\end{cases}
\end{equation}
satisfies the estimate
\begin{equation}\label{3.4}
\|\nabla u\|_{L^p(\boz;\rn)}\le
C\|\mathbf{f}\|_{L^p(\boz;\rn)},
\end{equation}
where $C$ is a positive constant independent of $u$
and $\mathbf{f}$.
Let $\mathbf{g}\in L^{p'}(\boz;\rn)$ and
$v\in W^{1,p'}_0(\boz)$ be the weak solution of the Dirichlet problem
\eqref{3.3} with $\mathbf{f}$ replaced by $\mathbf{g}$.
Then
$$\|\nabla v\|_{L^{p'}(\boz;\rn)}\le C\|\mathbf{g}\|_{L^{p'}(\boz;\rn)},$$
where $C$ is a positive constant independent of $v$ and $\mathbf{g}$.
\end{lemma}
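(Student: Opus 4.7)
The plan is to exploit duality together with the fact that the transpose matrix $A^{T}=a-b$ again falls under Assumption \ref{a1}, and hence inherits the hypothesized $L^{p}$ a priori estimate. Indeed, since $a$ is symmetric and $b$ is anti-symmetric, we have $A^{T}=a-b$; its symmetric part $a$ still satisfies \eqref{1.3} and its anti-symmetric part $-b$ still has entries in $\mathrm{BMO}(\boz)$. Thus, applying the assumption of the lemma to the operator $-\mathrm{div}(A^{T}\nabla\,\cdot)$, for every $\mathbf{F}\in L^{p}(\boz;\rn)$ there exists a weak solution $w\in W^{1,p}_{0}(\boz)$ of
\begin{equation*}
-\mathrm{div}\lf(A^{T}\nabla w\r)=\mathrm{div}(\mathbf{F})\quad\text{in}\ \boz,\qquad w=0\quad\text{on}\ \partial\boz,
\end{equation*}
satisfying $\|\nabla w\|_{L^{p}(\boz;\rn)}\ls\|\mathbf{F}\|_{L^{p}(\boz;\rn)}$.

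Next, I would derive the duality identity
\begin{equation*}
\int_{\boz}\mathbf{F}\cdot\nabla v\,dx=\int_{\boz}\mathbf{g}\cdot\nabla w\,dx.
\end{equation*}
For this, the bilinear form $B[\phi,\psi]:=\int_{\boz}A\nabla\phi\cdot\nabla\psi\,dx$, initially defined on $C^{\fz}_{\mathrm{c}}(\boz)\times C^{\fz}_{\mathrm{c}}(\boz)$, must be shown to extend by continuity to $W^{1,p}_{0}(\boz)\times W^{1,p'}_{0}(\boz)$ with the bound $|B[\phi,\psi]|\ls\|\nabla\phi\|_{L^{p}(\boz;\rn)}\|\nabla\psi\|_{L^{p'}(\boz;\rn)}$. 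The symmetric part presents no issue since $a\in L^{\fz}(\boz;\rr^{n^2})$. For the anti-symmetric part $\int_{\boz}b\nabla\phi\cdot\nabla\psi\,dx$, exploiting $b_{ij}=-b_{ji}$ rewrites the integrand as a linear combination of Jacobian-type quantities $\partial_{j}\phi\,\partial_{i}\psi-\partial_{i}\phi\,\partial_{j}\psi$. After extending $b$ to a $\mathrm{BMO}(\rn)$ matrix via Remark \ref{r1.1}(ii), the Coifman--Lions--Meyer--Semmes div-curl lemma places these Jacobians in the Hardy space $H^{1}(\rn)$, and $H^{1}$--$\mathrm{BMO}$ duality (generalizing the $p=2$ estimate recalled in Remark \ref{r1.2} from \cite[(2.11)]{lp19}) yields the desired bound. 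Approximating $w$ in $W^{1,p}_{0}(\boz)$ by a sequence in $C^{\fz}_{\mathrm{c}}(\boz)$ and passing to the limit in the weak formulation of $v$ gives
\begin{equation*}
\int_{\boz}A\nabla v\cdot\nabla w\,dx=-\int_{\boz}\mathbf{g}\cdot\nabla w\,dx,
\end{equation*}
while approximating $v$ in $W^{1,p'}_{0}(\boz)$ by test functions and using the weak formulation of $w$, together with the pointwise identity $A^{T}\nabla w\cdot\nabla v=A\nabla v\cdot\nabla w$, gives
\begin{equation*}
\int_{\boz}A\nabla v\cdot\nabla w\,dx=-\int_{\boz}\mathbf{F}\cdot\nabla v\,dx.
\end{equation*}
Equating the two right-hand sides produces the claimed identity.

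The conclusion is then immediate from H\"older's inequality and the a priori bound on $w$: for every $\mathbf{F}\in L^{p}(\boz;\rn)$,
\begin{equation*}
\lf|\int_{\boz}\mathbf{F}\cdot\nabla v\,dx\r|=\lf|\int_{\boz}\mathbf{g}\cdot\nabla w\,dx\r|\le\|\mathbf{g}\|_{L^{p'}(\boz;\rn)}\|\nabla w\|_{L^{p}(\boz;\rn)}\ls\|\mathbf{g}\|_{L^{p'}(\boz;\rn)}\|\mathbf{F}\|_{L^{p}(\boz;\rn)},
\end{equation*}
and taking the supremum over all such $\mathbf{F}$ with $\|\mathbf{F}\|_{L^{p}(\boz;\rn)}\le 1$, the $L^{p}$--$L^{p'}$ duality for $\nabla v$ produces the desired estimate $\|\nabla v\|_{L^{p'}(\boz;\rn)}\ls\|\mathbf{g}\|_{L^{p'}(\boz;\rn)}$. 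The main technical obstacle is to control the bilinear form on the mixed pair $W^{1,p}_{0}(\boz)\times W^{1,p'}_{0}(\boz)$ when the anti-symmetric part $b$ is merely $\mathrm{BMO}$: a crude $L^{\fz}$ bound is unavailable, and it is precisely the div-curl structure (combined with the $\mathrm{BMO}(\rn)$ extension available on NTA domains) that keeps the pairing under control.
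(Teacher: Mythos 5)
Your proof is correct and follows essentially the same route as the paper: observe that $A^{\ast}=A^{T}$ again satisfies Assumption \ref{a1}, solve the adjoint problem $-\mathrm{div}(A^{T}\nabla w)=\mathrm{div}(\mathbf{F})$ for an arbitrary $\mathbf{F}\in L^{p}(\boz;\rn)$, apply the hypothesis to obtain $\|\nabla w\|_{L^{p}}\lesssim\|\mathbf{F}\|_{L^{p}}$, use the duality identity $\int_{\boz}\mathbf{F}\cdot\nabla v\,dx=\int_{\boz}\mathbf{g}\cdot\nabla w\,dx$, and then take the supremum over unit vectors $\mathbf{F}$.

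Where you go beyond the paper is in spelling out why the duality identity is actually legitimate when the anti-symmetric part $b$ is only $\mathrm{BMO}$: the weak formulation \eqref{1.5} is stated for $\varphi\in C^{\fz}_{\mathrm{c}}(\boz)$, and using $w\in W^{1,p}_{0}(\boz)$ (resp.\ $v\in W^{1,p'}_{0}(\boz)$) as a test function requires the bilinear form to extend by continuity to the mixed pair $W^{1,p}_{0}(\boz)\times W^{1,p'}_{0}(\boz)$. The paper writes the chain of equalities $\int\mathbf{g}\cdot\nabla w=-\int A\nabla v\cdot\nabla w=-\int A^{\ast}\nabla w\cdot\nabla v=\int\mathbf{f}\cdot\nabla v$ without comment; you correctly identify that the $\int b\nabla\phi\cdot\nabla\psi\,dx$ piece needs the anti-symmetry of $b$, the div-curl lemma, and $H^{1}$--$\mathrm{BMO}$ duality (plus the Jones $\mathrm{BMO}$ extension from Remark \ref{r1.1}(ii)) to get the bound $|B[\phi,\psi]|\lesssim\|\nabla\phi\|_{L^{p}}\|\nabla\psi\|_{L^{p'}}$ that makes the density argument run. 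This is exactly the $L^{p}$--$L^{p'}$ analogue of the $L^{2}$ estimate recorded in Remark \ref{r1.2} via \cite[(2.11)]{lp19}, and it is the right thing to worry about; the paper's proof is implicitly relying on it. Your write-up is thus a strictly more complete version of the same argument.
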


\begin{proof}
Let $\mathbf{f}\in L^p(\boz;\rn)$ and
$w\in W^{1,p}_0(\boz)$ be the weak solution of the Dirichlet problem
\begin{equation}\label{3.5}
\begin{cases}
-\mathrm{div}(A^\ast\nabla w)=\mathrm{div}(\mathbf{f})\ \ & \text{in}\ \ \boz,\\
w=0\ \ & \text{on}\ \ \partial\boz,
\end{cases}
\end{equation}
where $A^\ast$ denotes the transpose of $A$.
By the assumption that $A$ satisfies Assumption \ref{a1}, we find that
$A^\ast$ also satisfies Assumption \ref{a1}, which, combined with
the assumption \eqref{3.4}, further implies that
\eqref{3.4} also holds true for the weak solution $w$ of \eqref{3.5}.
Moreover, from the fact that $v$ is the weak solution of \eqref{3.3} with
$\mathbf{f}$ replaced by $\mathbf{g}$, it follows that
$$\int_\boz\mathbf{g}\cdot \nabla w\,dx=-\int_\boz A\nabla v\cdot\nabla w\,dx
=-\int_\boz A^\ast\nabla w\cdot\nabla v\,dx=
\int_\boz \mathbf{f}\cdot\nabla v\,dx,
$$
which, together with \eqref{3.4} and the H\"older inequality, further implies that
\begin{align*}
\|\nabla v\|_{L^{p'}(\boz;\rn)}&=\sup_{\|\mathbf{f}\|_{L^{p}(\boz;\rn)}\le1}
\lf|\int_\boz\mathbf{f}\cdot\nabla v\,dx\r|
=\sup_{\|\mathbf{f}\|_{L^{p}(\boz;\rn)}\le1}
\lf|\int_\boz\mathbf{g}\cdot \nabla w\,dx\r|\\
&\le\sup_{\|\mathbf{f}\|_{L^{p}(\boz;\rn)}\le1}\|\mathbf{g}\|_{L^{p'}(\boz;\rn)}
\|\nabla w\|_{L^{p}(\boz;\rn)}
\nonumber\\
&\ls\sup_{\|\mathbf{f}\|_{L^{p}(\boz;\rn)}\le1}\|\mathbf{g}\|_{L^{p'}(\boz;\rn)}
\|\mathbf{f}\|_{L^{p}(\boz;\rn)}\ls\|\mathbf{g}\|_{L^{p'}(\boz;\rn)}.\nonumber
\end{align*}
This finishes the proof of Lemma \ref{l3.1}.
\end{proof}

Moreover, to prove Theorem \ref{t1.1}, we need the following properties of $A_p(\rn)$ weights,
which are well known (see, for instance, \cite[Chapter 7]{g14} and \cite[Proposition 2.1 and Lemma 4.4]{am07}).

\begin{lemma}\label{l3.2}
Let $q\in(1,\fz)$, $\omega\in A_q(\rn)$, and $\boz\subset\rn$ be a bounded domain.
\begin{enumerate}
\item[\rm(i)] There exists a $q_1\in(1,q)$, depending only on $n$,
$q$, and $[\omega]_{A_q(\rn)}$, such that $\omega\in A_{q_1}(\rn)$.
\item[\rm(ii)] There exists a $\gamma\in(0,\fz)$,
depending only on $n$, $q$, and $[\omega]_{A_q(\rn)}$, such that $\omega\in RH_{1+\gamma}(\rn)$.
\item[\rm(iii)] If $q'$ denotes the conjugate number of $q$, namely, $1/q+1/q'=1$,
then $\omega^{-q'/q}\in A_{q'}(\rn)$ and
$[\omega^{-q'/q}]_{A_{q'}(\rn)}=[\omega]^{q'/q}_{A_q(\rn)}$.
\item[\rm(iv)] If $\omega\in RH_p(\rn)$ with $p\in(1,\fz)$, then there exists a $p_1\in(p,\fz]$,
depending only on $n$, $p$, and $[\omega]_{RH_p(\rn)}$, such that $\omega\in RH_{p_1}(\rn)$.
\item[\rm(v)] Let $p_0,\,q_0\in(1,\fz)$, $p\in(p_0,q_0)$, and $v\in L^1_\loc(\rn)$.
Then $v\in A_{\frac{p}{p_0}}(\rn)\cap RH_{(\frac{q_0}{p})'}(\rn)$ if and only if
$v^{1-p'}\in A_{\frac{p'}{(q_0)'}}(\rn)\cap RH_{(\frac{(p_0)'}{p'})'}(\rn)$.
\item[\rm(vi)] Let $\omega\in A_p(\rn)$ with some $p\in[1,\fz)$ and $\gz\in(0,1)$.
Then $\omega^{\gz}\in RH_{\gz^{-1}}(\rn)$ and there exists a positive constant
$C$, depending only on $[\omega]_{A_p(\rn)}$ and $\gz$, such that
$[\omega^\gz]_{RH_{\gz^{-1}}(\rn)}\le C$.
\item[\rm(vii)] Let $q_2:=q(1+\frac{1}{\gamma})$ with
$\gamma$ as in (ii) and let $q_1$ be as in (i).
Then $L^{q_2}(\boz)\subset L^q_\omega(\boz)\subset
L^{\frac q{q_1}}(\boz)$.
\end{enumerate}
\end{lemma}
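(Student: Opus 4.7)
The plan is to dispatch each of the seven items in turn by appealing to standard facts about Muckenhoupt weights and reverse H\"older classes, essentially as in \cite[Chapter 7]{g14} and \cite[Proposition 2.1 and Lemma 4.4]{am07}. Parts (i), (ii), and (iv) are classical self-improvement properties that rest on the Coifman--Fefferman (Gehring-type) reverse H\"older lemma: any $A_q(\rn)$ weight satisfies a reverse H\"older inequality with some exponent $1+\gamma>1$ depending only on $n$, $q$, and $[\omega]_{A_q(\rn)}$. This directly yields (ii). Inserting this self-improved reverse H\"older inequality into the $A_q$ defining expression and applying H\"older lowers the exponent from $q$ to some $q_1\in(1,q)$, which gives (i). The parallel Gehring-type self-improvement for the class $RH_p(\rn)$ then produces (iv), with the new exponent $p_1>p$ controlled by $n$, $p$, and $[\omega]_{RH_p(\rn)}$.

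Part (iii) is a purely algebraic identity. I would write out $[\omega^{-q'/q}]_{A_{q'}(\rn)}$ explicitly and use the exponent identities $q'/q=q'-1$ and $1/(q'-1)=q-1$; the supremum then collapses to the $q'/q$-th power of $[\omega]_{A_q(\rn)}$, so both the membership and the sharp constant follow immediately. Part (v), which I expect to be the main technical point to check carefully, I would handle by invoking the duality characterization of $A_{p/p_0}(\rn)\cap RH_{(q_0/p)'}(\rn)$ in \cite[Proposition 2.1]{am07}. The idea is that the substitution $v\mapsto v^{1-p'}$ exchanges $A_\bullet$ and $RH_\bullet$ conditions in the pair indicated in (v): the $A$-to-$A$ part is exactly the duality (iii), while the $RH$-to-$RH$ part follows by rewriting $(\fint_B v^r)^{1/r}(\fint_B v)^{-1}$ in terms of $v^{1-p'}$ and applying H\"older. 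The only real work is bookkeeping of several pairs of conjugate exponents; no new analytic ingredient beyond (iii) is needed.

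For part (vi), the identity $[\omega^\gz]_{RH_{1/\gz}(\rn)}=\sup_B(\fint_B\omega)^\gz(\fint_B\omega^\gz)^{-1}$ (obtained by substitution in the definition of $RH_{1/\gz}$) reduces the claim to the lower bound $\fint_B\omega^\gz\ge c(\fint_B\omega)^\gz$. This I would extract from the $A_\fz$ property enjoyed by any $A_p(\rn)$ weight: there exists a measurable subset $E\subset B$ with $|E|\ge|B|/2$ on which $\omega\ge c_1\fint_B\omega$ for some $c_1$ depending only on $[\omega]_{A_p(\rn)}$. Restricting the integral of $\omega^\gz$ to $E$ then gives the desired bound, with $C$ depending only on $[\omega]_{A_p(\rn)}$ and $\gz$. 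Finally, part (vii) is a routine application of H\"older's inequality on the bounded domain $\boz$: the inclusion $L^{q_2}(\boz)\subset L^q_\omega(\boz)$ follows from $\omega\in RH_{1+\gamma}(\rn)$ via H\"older with conjugate exponents $(1+\gamma)/\gamma$ and $1+\gamma$, while the inclusion $L^q_\omega(\boz)\subset L^{q/q_1}(\boz)$ follows by writing $|f|^{q/q_1}=(|f|^q\omega)^{1/q_1}\omega^{-1/q_1}$ and applying H\"older with exponents $q_1$ and $q_1'$, using that $\omega\in A_{q_1}(\rn)$ together with the boundedness of $\boz$ ensures integrability of $\omega^{-1/(q_1-1)}$ over $\boz$.
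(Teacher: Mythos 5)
Your proposal is correct and matches the paper's treatment: the paper states Lemma \ref{l3.2} without proof, citing \cite[Chapter 7]{g14} and \cite[Proposition 2.1 and Lemma 4.4]{am07}, and your sketch invokes exactly these standard facts (Gehring/Coifman--Fefferman self-improvement for (i), (ii), (iv); the algebraic duality for (iii); the Johnson--Neugebauer type characterization from \cite{am07} plus $A_r$-duality for (v); an $A_\infty$-type lower bound for (vi); and H\"older on the bounded domain together with (i)/(ii) for (vii)). One small imprecision worth noting in (v): the substitution $v\mapsto v^{1-p'}$ does not literally exchange the $A$-condition with the $RH$-condition; rather, via the characterization $w\in A_r\cap RH_s\Leftrightarrow w^s\in A_{s(r-1)+1}$ both sides reduce to membership of a suitable power of $v$ in an $A_{r_1}$ class, and then the equivalence is precisely the $A_{r_1}\leftrightarrow A_{r_1'}$ duality of part (iii) applied to that power -- the exponent bookkeeping shows $(q_0/p)'\cdot\bigl(-\tfrac{1}{r_1-1}\bigr)=(1-p')\cdot(p_0'/p')'$ and $r_1'=r_2$, as expected.
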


Furthermore, we also need the  following
Lemma \ref{l3.3}, whose proof is similar to that of \cite[Lemma 4.38]{bm16},
and we omit the details here.

\begin{lemma}\label{l3.3}
Let $n\ge2$, $\boz\subset\rn$ be a bounded $\mathrm{NTA}$ domain, $0<p_0<q\le\fz$, and
$r_0\in(0,\diam(\boz))$. Assume that $x\in\overline{\boz}$ and
the weak reverse H\"older inequality
\begin{equation*}
\lf[\fint_{B_\boz(x,r)}|g|^q\,dx\r]^{\frac1q}
\le C_3\lf[\fint_{B_\boz(x,2r)}
|g|^{p_0}\,dx\r]^{\frac1{p_0}}
\end{equation*}
holds true for a given measurable function $g$ on $\boz$ and any $r\in(0,r_0)$,
where $C_3$ is a positive constant, independent of $x$ and $r$,
which may depend on $g$.
Then, for any given $p\in(0,\fz]$, there exists a positive constant $C$,
depending only on $p$, $p_0$, $q$, and $C_3$, such that
\begin{equation*}
\lf[\fint_{B_\boz(x,r)}|g|^q\,dx\r]^{\frac1q}
\le C\lf[\fint_{B_\boz(x,2r)}
|g|^{p}\,dx\r]^{\frac1p}
\end{equation*}
holds true for any $r\in(0,r_0)$.
\end{lemma}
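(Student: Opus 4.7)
The plan is to split into two cases according to whether $p\ge p_0$ or $p\in(0,p_0)$. In the easy case $p\ge p_0$, I would just invoke H\"older's inequality to obtain $[\fint_{B}|g|^{p_0}\,dx]^{1/p_0}\le[\fint_{B}|g|^{p}\,dx]^{1/p}$ on any ball $B$; combining with the hypothesis immediately yields the conclusion with $C=C_3$.

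For the main case $p\in(0,p_0)$, the strategy will be an interpolation--iteration argument. The first step is Lyapunov's interpolation inequality for $L^s$-averages: with $\alpha\in(0,1)$ determined by $\tfrac{1}{p_0}=\tfrac{\alpha}{p}+\tfrac{1-\alpha}{q}$ (under the convention $1/\fz:=0$), one has, for every ball $B$,
$$
\lf[\fint_{B}|g|^{p_0}\,dx\r]^{1/p_0}
\le\lf[\fint_{B}|g|^{p}\,dx\r]^{\alpha/p}\,\lf[\fint_{B}|g|^{q}\,dx\r]^{(1-\alpha)/q}.
$$
Writing $\ca(r):=[\fint_{B_\boz(x,r)}|g|^q\,dx]^{1/q}$ and $\cb(r):=[\fint_{B_\boz(x,r)}|g|^p\,dx]^{1/p}$, combining this interpolation on $B_\boz(x,2r)$ with the hypothesis yields, for each $r\in(0,r_0)$, the nonlinear recurrence $\ca(r)\le C_3\,\cb(2r)^\alpha\,\ca(2r)^{1-\alpha}$. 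Young's inequality $u^\alpha v^{1-\alpha}\le\theta v+C_\theta u$, valid for any $\theta\in(0,1)$, then converts this into the linear recurrence
$$
\ca(r)\le\theta\,\ca(2r)+C(\theta,\alpha,C_3)\,\cb(2r).
$$

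The next step is to iterate this recurrence along the dyadic scales $r,\,2r,\,4r,\,\ldots$ up to the largest one still below $r_0$; after $k$ steps this produces the bound $\ca(r)\le\theta^{k}\ca(2^{k}r)+C_\theta\sum_{j=1}^{k}\theta^{j-1}\cb(2^{j}r)$. With $\theta$ chosen sufficiently small and the doubling $|B_\boz(x,s)|\sim s^{n}$ coming from the NTA hypothesis, both the tail $\theta^{k}\ca(2^{k}r)$ and the geometric sum $\sum\theta^{j-1}\cb(2^{j}r)$ should collapse into a constant multiple of $\cb(2r)$, yielding the desired estimate $\ca(r)\le C\,\cb(2r)$ with $C$ depending only on $p,\,p_0,\,q,\,C_3$.

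The hard part will be precisely this final absorption step: one has to verify that $\sum_{j=1}^{k}\theta^{j-1}\cb(2^{j}r)$ is dominated by a constant multiple of $\cb(2r)$ alone, and that $\theta^{k}\ca(2^{k}r)$ becomes negligible as the number of iterations grows, even though the radii $2^{j}r$ approach $r_{0}$ rather than shrinking. This is a standard Giaquinta--Modica-type iteration lemma, and the detailed bookkeeping of constants is essentially identical to the argument of \cite[Lemma 4.38]{bm16}, to which we defer for the technical details.
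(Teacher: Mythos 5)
The first half of your argument is fine: the case $p\ge p_0$ is handled by H\"older, and the interpolation inequality $\bigl[\fint_B|g|^{p_0}\bigr]^{1/p_0}\le\bigl[\fint_B|g|^{p}\bigr]^{\alpha/p}\bigl[\fint_B|g|^{q}\bigr]^{(1-\alpha)/q}$ together with Young's inequality does yield the recurrence $\ca(r)\le\theta\ca(2r)+C_\theta\cb(2r)$ for any $\theta\in(0,1)$. The problem is the iteration scheme you then apply. Iterating along dyadic radii $r,2r,4r,\dots$ up to $r_0$ gives $\ca(r)\le\theta^k\ca(2^kr)+C_\theta\sum_{j=1}^{k}\theta^{j-1}\cb(2^jr)$, and neither term is under control. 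First, $k$ is capped by $2^kr<r_0$, so when $r$ is close to $r_0$ there is at most one (or zero) admissible iteration, $\theta^k$ is not small, and the leading term $\theta^k\ca(2^kr)$ is not absorbed -- there is nothing to absorb it into on the right-hand side. Second, and more seriously, $\cb(2^jr)=\bigl[\fint_{B_\boz(x,2^jr)}|g|^p\,dx\bigr]^{1/p}$ need not be comparable to $\cb(2r)$: the larger balls can pick up mass of $g$ located entirely outside $B_\boz(x,2r)$, so the geometric sum is not dominated by $C\cb(2r)$. The "standard Giaquinta--Modica iteration" you invoke is designed for a one-parameter family of radii confined to a \emph{fixed} interval $[\rho,R]$ in which the error term sits at the fixed outer radius $R$; your recurrence instead has the error term marching out to $r_0$, and that is not a situation the iteration lemma covers.

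The argument that actually works (and that is what \cite[Lemma 4.38]{bm16} carries out) uses the same interpolation idea, but iterates inside the fixed annulus of radii between $r$ and $2r$. To do that one first needs a version of the weak reverse H\"older inequality between two nearby radii $\rho<R$ in $[r,2r]$, with a constant blowing up as $(R-\rho)^{-\gamma}$; this is obtained by covering $B_\boz(x,\rho)$ by balls of radius comparable to $R-\rho$ centered at points of $\overline\boz$, applying the hypothesis to each, and summing, which uses the fact that \eqref{1.8} is assumed uniformly over admissible centers (this is where the ``independent of $x$'' clause is essential). Combined with the interpolation and Young steps, this gives $\ca(\rho)\le\theta\,\ca(R)+C_\theta(R-\rho)^{-\gamma}\cb(2r)$ for all $r\le\rho<R\le2r$, and now the Giaquinta iteration lemma on $[r,2r]$ yields $\ca(r)\le C\,\cb(2r)$ with a constant depending only on $p,p_0,q,C_3$. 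Without the covering step you only have the doubling version of the inequality, and there is no way to iterate without leaving the interval $[r,2r]$; that is precisely the missing ingredient in your sketch.
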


Now, we prove Theorem \ref{t1.1} by using Theorem \ref{t3.1} and Lemmas
\ref{l3.1}, \ref{l3.2}, and \ref{l3.3}.

\begin{proof}[Proof of Theorem \ref{t1.1}]
We first show (i).
Let $B:=B(x_B,r_B)\subset\rn$ be a ball satisfying $r_B\in(0,r_0/4)$ and either
$2B\subset\boz$ or $x_B\in\partial\boz$.
Take $\phi\in C^\fz_\mathrm{c}(\rn)$ such that $\phi\equiv1$ on $2B$, $0\le\phi\le1$,
and $\supp(\phi)\subset 4B$. Let $w,\ v\in W^{1,2}_0(\boz)$ be respectively the
weak solutions of the Dirichlet problems
\begin{equation}\label{3.6}
\begin{cases}
-\mathrm{div}(A\nabla w)=\mathrm{div}(\phi\mathbf{f})\ \ &\text{in}\ \ \boz,\\
w=0 \ \ &\text{on}\ \ \partial\boz
\end{cases}
\end{equation}
and
\begin{equation}\label{3.7}
\begin{cases}
-\mathrm{div}(A\nabla v)=\mathrm{div}((1-\phi)\mathbf{f})\ \ &\text{in}\ \ \boz,\\
v=0 \ \ &\text{on}\ \ \partial\boz.
\end{cases}
\end{equation}
Then $u=w+v$ and $\nabla u=\nabla w+\nabla v$.
Let $F:=|\nabla u|$, $f:=|\mathbf{f}|$, $F_B:=|\nabla w|$, and
$R_B:=|\nabla v|$. It is easy to see that $0\le F\le F_B+R_B$.
By $\mathbf{f}\in L^2(\boz;\rn)$, \eqref{3.6}, and the fact that \eqref{1.9}
holds true for $p=2$ (see Remark \ref{r1.2}), we conclude that
\begin{align}\label{3.8}
\fint_{2B_\boz}F_B^2\,dx=
\fint_{2B_\boz}|\nabla w|^2\,dx
\ls\frac{1}{|2B_\boz|}\int_{\boz}|\mathbf{f}\phi|^2\,dx\ls\fint_{4B_\boz}|\mathbf{f}|^2\,dx
\sim\fint_{4B_\boz}f^2\,dx.
\end{align}
Moreover, from \eqref{3.7} and the assumption \eqref{1.8} of this theorem,
it follows that \eqref{1.8} holds true for the above $v$,
which, together with the self-improvement property of the weak reverse H\"older inequality
(see, for instance, \cite[pp.\,122-123]{g83}), further implies that
there exists an $\uc_0\in(0,\fz)$ such that the inequality \eqref{1.8}
holds true with $p$ replaced by $p+\uc_0$.
By this and Lemma \ref{l3.3}, we conclude that,
for any $q\in(0,2]$, the weak reverse H\"older inequality
\begin{equation}\label{3.9}
\lf(\fint_{B_\boz}|\nabla v|^{p+\uc_0}\,dx\r)^{1/(p+\uc_0)}
\ls\lf(\fint_{2B_\boz}
|\nabla v|^{q}\,dx\r)^{1/q}
\end{equation}
holds true, which, combined with \eqref{3.8}, further implies that
\begin{align}\label{3.10}
\lf(\fint_{B_\boz}R_B^{p+\uc_0}\,dx
\r)^{1/(p+\uc_0)}&\ls\lf(\fint_{2B_\boz}|\nabla v|^2\,dx\r)^{1/2}
\ls\lf(\fint_{4B_\boz}|\nabla u|^2\,dx\r)^{1/2}
+\lf(\fint_{4B_\boz}f^2\,dx\r)^{1/2}\\ \nonumber
&\sim\lf(\fint_{4B_\boz}F^2\,dx\r)^{1/2}
+\lf(\fint_{4B_\boz}f^2\,dx\r)^{1/2}.
\end{align}
From \eqref{3.8} and \eqref{3.10}, we deduce that \eqref{3.1} and \eqref{3.2}
hold true with $p_2:=p+\uc_0$ and $p_1:=2$. Thus, by Theorem \ref{t3.1} with $\omega\equiv1$
and $q:=p$, and the H\"older inequality, we conclude that
\begin{align*}
\lf(\fint_{\boz}|\nabla u|^p\,dx\r)^{1/p}&\ls\lf(\fint_{\boz}|\nabla u|^2\,dx\r)^{1/2}+
\lf(\fint_{\boz}|\mathbf{f}|^p\,dx\r)^{1/p}\\
&\ls\lf(\fint_{\boz}|\mathbf{f}|^2\,dx\r)^{1/2}+
\lf(\fint_{\boz}|\mathbf{f}|^p\,dx\r)^{1/p}
\ls\lf(\fint_{\boz}|\mathbf{f}|^p\,dx\r)^{1/p},
\end{align*}
which implies that \eqref{1.9} holds true.
Thus, (i) holds true.

Next, we prove (ii). Let $q\in[2,p]$, $q_0\in[1,\frac{q}{p'}]$, $r_0\in[(\frac{p}{q})',\fz]$,
and $\omega\in A_{q_0}(\rn)\cap RH_{r_0}(\rn)$. Assume that
$\uc_0\in(0,\fz)$ is as in \eqref{3.9}. Then $\omega\in A_{\frac{q}{(p+\uc_0)'}}(\rn)\cap RH_s(\rn)$
with $s\in((\frac{p+\uc_0}{q})',\fz]$.
Let $u$ be the weak solution of the Dirichlet problem $(D)_{q,\,\omega}$ with
$\mathbf{f}\in L^q_\omega(\boz;\rn)$. Then, from Lemma \ref{l3.2}(vii),
it follows that
\begin{equation}\label{3.11}
L^q_\omega(\boz)\subset L^{q/q_0}(\boz).
\end{equation}
By the fact that $q_0\le q/p'<q/(p+\uc_0)'$, we find that
$(p+\uc_0)'<q/q_0$, which, together with the H\"older inequality
and the assumption that $\boz$ is bounded, implies that
$L^{q/q_0}(\boz)\subset L^{(p+\uc_0)'}(\boz)$. From this and \eqref{3.11}, we deduce that
$\mathbf{f}\in L^{q/q_0}(\boz;\rn)\subset L^{(p+\uc_0)'}(\boz;\rn)$.

Let $B:=B(x_B,r_B)\subset\rn$ be a ball satisfying $|B|\le\bz_1|\boz|$ and either $2B\subset\boz$ or $x_B\in\partial\boz$,
where $\bz_1\in(0,1)$ is as in Theorem \ref{t3.1}.
Take $\phi\in C^\fz_\mathrm{c}(\rn)$ such that $\phi\equiv1$ on $2B$, $0\le\phi\le1$,
and $\supp(\phi)\subset 4B$.
Let $w$ and $v$ be, respectively, as in \eqref{3.6} and \eqref{3.7}.
Then  $u=w+v$ and $\nabla u=\nabla w+\nabla v$.
Recall that $F:=|\nabla u|$, $f:=|\mathbf{f}|$, $F_B:=|\nabla w|$,
and $R_B:=|\nabla v|$.
By the proof of (i), we know that \eqref{1.9} holds true
with $p$ replaced by $p+\uc_0$, which, combined with Lemma \ref{l3.1} and \eqref{3.6},
further implies that
$$\|\nabla w\|_{L^{(p+\uc_0)'}(\boz;\rn)}\ls\lf\|\phi\mathbf{f}\r\|_{L^{(p+\uc_0)'}(\boz;\rn)}
\ls\lf\|\mathbf{f}\r\|_{L^{(p+\uc_0)'}(4B_\boz;\rn)}.$$
From this, it follows that
\begin{align}\label{3.12}
\fint_{2B_\boz}F_B^{(p+\uc_0)'}\,dx
\ls\fint_{4B_\boz}f^{(p+\uc_0)'}\,dx.
\end{align}
Furthermore, by the assumption \eqref{1.8} of this theorem and \eqref{3.7},
we conclude that \eqref{1.8} holds true for the above $v$.
Thus, \eqref{3.10} holds true for the above $v$, which,
together with \eqref{3.12}, further implies that
\begin{align}\label{3.13}
\lf[\fint_{B_\boz}R_B^{p+\uc_0}
\,dx\r]^{1/(p+\uc_0)}&=\lf[\fint_{B_\boz}|\nabla v|^{p+\uc_0}
\,dx\r]^{1/(p+\uc_0)}\ls\lf[\fint_{2B_\boz}|\nabla v|^{(p+\uc_0)'}\,dx\r]^{1/(p+\uc_0)'}\\ \nonumber
&\ls\lf[\fint_{2B_\boz}|\nabla u|^{(p+\uc_0)'}\,dx\r]^{1/(p+\uc_0)'}
+\lf[\fint_{4B_\boz}|\mathbf{f}|^{(p+\uc_0)'}\,dx\r]^{1/(p+\uc_0)'}\\ \nonumber
&\ls\lf[\fint_{4B_\boz}F^{(p+\uc_0)'}\,dx\r]^{1/(p+\uc_0)'}
+\lf[\fint_{4B_\boz}f^{(p+\uc_0)'}\,dx\r]^{1/(p+\uc_0)'}.
\end{align}
From \eqref{3.12} and \eqref{3.13}, we deduce that \eqref{3.1} and \eqref{3.2}
hold true with $p_2:=p+\uc_0$ and $p_1:=(p+\uc_0)'$, which, combined with
$q<p+\uc_0$, Theorem \ref{t3.1}, and \eqref{3.11}, further implies that
\begin{align*}
\lf[\frac{1}{\omega(\boz)}\int_{\boz}|\nabla u|^q\omega\,dx\r]^{1/q}&
=\lf[\frac{1}{\omega(\boz)}\int_{\boz}F^q\omega\,dx\r]^{1/q}\\
&\ls\lf[\frac{1}{|\boz|}\int_{\boz}|F|^{(p+\uc_0)'}\,dx\r]^{1/(p+\uc_0)'}+
\lf[\frac{1}{\omega(\boz)}\int_{\boz}|\mathbf{f}|^q\omega\,dx\r]^{1/q}\\
&\sim\lf[\frac{1}{|\boz|}\int_{\boz}|\nabla u|^{(p+\uc_0)'}\,dx\r]^{1/(p+\uc_0)'}+
\lf[\frac{1}{\omega(\boz)}\int_{\boz}|\mathbf{f}|^q\omega\,dx\r]^{1/q}\\
&\ls\lf[\frac{1}{|\boz|}\int_{\boz}|\mathbf{f}|^{(p+\uc_0)'}\,dx\r]^{1/(p+\uc_0)'}+
\lf[\frac{1}{\omega(\boz)}\int_{\boz}|\mathbf{f}|^q\omega\,dx\r]^{1/q}\\
&\ls\lf[\frac{1}{\omega(\boz)}\int_{\boz}|\mathbf{f}|^q\omega\,dx\r]^{1/q}.
\end{align*}
Therefore, \eqref{1.10} holds true, which shows (ii).
This finishes the proof of Theorem \ref{t1.1}.
\end{proof}

\section{Proof of Theorem \ref{t1.2}\label{s4}}

 In this section, we prove Theorem \ref{t1.2}
by using Theorem \ref{t1.1} and the method of perturbation.
We begin with the following reverse H\"older inequality established in
\cite[Lemma 3.2 and Corollary 4.1]{lp19}.

\begin{lemma}\label{l4.1}
Let $\boz\subset\rn$ be a bounded $\mathrm{NTA}$ domain, $B(x_0,r)$ a ball such that $r\in(0,r_0/4)$
and either $x_0\in\partial\boz$ or $B(x_0,2r)\subset\boz$, where $r_0\in(0,\diam(\boz))$ is a constant.
Assume that the matrix $A:=a+b$ satisfies Assumption \ref{a1} and
$u\in W^{1,2}(B_\boz(x_0,2r))$ is the weak solution of the following Dirichlet problem
\begin{equation*}
\begin{cases}
-\mathrm{div}(A\nabla u)=0\ \ & \text{in}\ \ B_\boz(x_0,2r),\\
u=0 \ \ & \text{on}\ \ B(x_0,2r)\cap\partial\boz.
\end{cases}
\end{equation*}
Then there exists a positive constant $p\in(2,\fz)$,
depending on $\boz$, $n$, and $\mu_0$ in \eqref{1.3}, such that
$$\lf[\fint_{B_\boz(x_0,r)}|\nabla u|^p\,dx\r]^{1/p}\le C\lf[1+\|b\|_{\mathrm{BMO}(\boz)}\r]
\lf[\fint_{B_\boz(x_0,2r)}|\nabla u|^2\,dx\r]^{1/2},
$$
where $C$ is a positive constant depending only on $n$, $\boz$, and $p$.
\end{lemma}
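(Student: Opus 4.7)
\emph{Plan.} My approach rests on an anti-symmetry-adapted Caccioppoli inequality, the boundary Sobolev--Poincar\'e inequality available on NTA domains, and the Gehring self-improvement. Fix a cutoff $\eta\in C^\infty_{\mathrm{c}}(B(x_0,2r))$ with $\eta\equiv 1$ on $B(x_0,3r/2)$, $0\le\eta\le 1$, and $|\nabla\eta|\le C/r$. Testing $\operatorname{div}(A\nabla u)=0$ against $u\eta^2$ (which lies in $W^{1,2}_0(B_\boz(x_0,2r))$ because $u=0$ on $B(x_0,2r)\cap\partial\boz$) and using the anti-symmetry of $b$ to cancel $\int b\nabla u\cdot\nabla u\,\eta^2=0$, I obtain
\[
\mu_0\int|\nabla u|^2\eta^2\,dx\le 2\left|\int a\nabla u\cdot u\eta\nabla\eta\,dx\right|+2\left|\int b\nabla u\cdot u\eta\nabla\eta\,dx\right|.
\]
The symmetric term is controlled by Cauchy--Schwarz with absorption. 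For the BMO term, I write $b=(b-b_B)+b_B$ with $b_B$ the average of $b$ over $B:=B(x_0,2r)$. The contribution from the constant part vanishes: replacing $u\eta\nabla\eta=\tfrac12\nabla(u^2\eta^2)-\tfrac12 u^2\eta\nabla\eta$, one checks by integration by parts that the resulting expression contains $b_B:\nabla^2(\eta^2)$-type contractions, which vanish thanks to the anti-symmetry of $b_B$ against the symmetry of Hessians. The remaining piece with $b-b_B$ is estimated via H\"older with a slightly elevated exponent, the John--Nirenberg inequality giving $\|b-b_B\|_{L^q(B)}\lesssim\|b\|_{\mathrm{BMO}(\boz)}|B|^{1/q}$ for any $q<\infty$. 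Absorbing the resulting $\varepsilon\int|\nabla u|^2\eta^2$ term and balancing $\varepsilon$ yields the Caccioppoli-type estimate
\[
\fint_{B_\boz(x_0,3r/2)}|\nabla u|^2\,dx\le \frac{C(1+\|b\|_{\mathrm{BMO}(\boz)})^2}{r^2}\fint_{B_\boz(x_0,2r)}u^2\,dx.
\]

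Next, I invoke the boundary Sobolev--Poincar\'e inequality, which on NTA domains applies to functions in $W^{1,2}(B_\boz(x_0,2r))$ vanishing on $B(x_0,2r)\cap\partial\boz$ (via zero-extension of $u\tilde\eta$ for an auxiliary cutoff $\tilde\eta$ followed by the Sobolev embedding of $W^{1,2}_0$ across $\partial\boz$). This supplies, for $q:=2n/(n+2)<2$,
\[
\left(\fint_{B_\boz(x_0,2r)}u^2\,dx\right)^{1/2}\le Cr\left(\fint_{B_\boz(x_0,2r)}|\nabla u|^q\,dx\right)^{1/q}.
\]
Inserting this into the Caccioppoli bound, taking square roots (thereby linearizing the $(1+\|b\|_{\mathrm{BMO}(\boz)})$ factor), and combining with the analogous interior estimate (where $2B\subset\boz$, $u$ is replaced by $u-u_{2B}$, and the ordinary Poincar\'e inequality is used) yields the weak reverse H\"older gap inequality
\[
\left(\fint_{B_\boz(x_0,3r/2)}|\nabla u|^2\,dx\right)^{1/2}\le C\bigl(1+\|b\|_{\mathrm{BMO}(\boz)}\bigr)\left(\fint_{B_\boz(x_0,2r)}|\nabla u|^q\,dx\right)^{1/q}
\]
valid uniformly at boundary and interior balls. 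Applying the Gehring self-improvement lemma in its NTA-adapted form produces an exponent $p>2$, depending only on $n$, $\mu_0$, and $\boz$, for which the claimed reverse H\"older inequality holds on $B_\boz(x_0,r)$ with right-hand exponent $q$; an application of Lemma \ref{l3.3} then restores the exponent $2$ on the right, completing the argument.

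\emph{Principal obstacle.} The delicate point is the handling of the cross term $\int b\nabla u\cdot u\eta\nabla\eta\,dx$ in Step~1. One must exploit both the anti-symmetry of $b$ (to kill the constant-matrix contribution via a symmetric-Hessian argument) \emph{and} the higher-integrability afforded by John--Nirenberg on the oscillation $b-b_B$, all while preserving the correct power of $\|b\|_{\mathrm{BMO}(\boz)}$. A naive estimate produces $(1+\|b\|_{\mathrm{BMO}(\boz)})^2$ at the level of Caccioppoli, and it is only after the reverse-H\"older step and square-rooting that one recovers the advertised linear dependence $(1+\|b\|_{\mathrm{BMO}(\boz)})$. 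The NTA hypothesis is used twice in essential ways: to guarantee the boundary Sobolev--Poincar\'e inequality (Step~2) and to justify the NTA version of Gehring needed in Step~3.
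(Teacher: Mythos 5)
The paper does not actually prove Lemma \ref{l4.1}: in Section \ref{s4} it is introduced verbatim as ``the following reverse H\"older inequality established in [Lemma 3.2 and Corollary 4.1] of \cite{lp19}.'' So you are offering an independent proof, which is fine in principle, but your argument has a genuine gap in the Caccioppoli step, and it is precisely the gap that makes the BMO anti-symmetric case hard.

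Your cancellation $\int_{\Omega} \eta^2\, b\nabla u\cdot\nabla u\,dx=0$ is fine (pointwise, $b\xi\cdot\xi=0$ for anti-symmetric $b$), and your claim that the constant part $b_B$ kills the cross term is also correct in spirit: after $u\,\partial_j u=\tfrac12\partial_j(u^2)$ and $\eta\,\partial_i\eta=\tfrac12\partial_i(\eta^2)$, an integration by parts produces a contraction of the constant anti-symmetric $b_B$ against the symmetric Hessian of $\eta^2$, which vanishes (your displayed identity for $u\eta\nabla\eta$ is garbled, but the conclusion is right). The problem is the oscillation term $\int (b-b_B)\nabla u\cdot u\eta\nabla\eta\,dx$. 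You propose to estimate it by ``H\"older with a slightly elevated exponent'' plus John--Nirenberg and then absorb an $\varepsilon\int|\nabla u|^2\eta^2$ term. This does not close: any H\"older split $\|\eta\nabla u\|_{L^2}\cdot\|b-b_B\|_{L^{s}(2B)}\cdot\|u\nabla\eta\|_{L^{t}(2B)}$ with $\tfrac12+\tfrac1s+\tfrac1t=1$ forces $t>2$, and to control $\|u\|_{L^t(2B)}$ by gradient data you must invoke Sobolev, which reintroduces $\|\nabla u\|_{L^2(2B)}$ on the right with the (non-small) prefactor $\|b\|_{\mathrm{BMO}(\boz)}$; you cannot absorb it, and you do not obtain the Caccioppoli inequality $\fint_{3r/2}|\nabla u|^2\lesssim (1+\|b\|_{\mathrm{BMO}})^2 r^{-2}\fint_{2r}u^2$ you claim. (Sending $t\to2$ sends $s\to\infty$, but $\mathrm{BMO}\not\subset L^\infty$, so there is no exponent for which a plain H\"older estimate closes.) This is exactly why \cite{lp19} handles this term with the div--curl lemma: anti-symmetrizing rewrites the integrand as a pairing of $b-b_B$ against a Jacobian-type quantity $\partial_j(u^2)\partial_i(\eta^2)-\partial_i(u^2)\partial_j(\eta^2)$, which has Hardy-space regularity and pairs with $\mathrm{BMO}$ by $\mathcal{H}^1$--$\mathrm{BMO}$ duality. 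Your steps~2 and~3 (boundary Sobolev--Poincar\'e on NTA domains and Gehring self-improvement, then Lemma \ref{l3.3} to restore exponent $2$) are standard and correct granted the Caccioppoli step, but without the div--curl input that step does not go through, so the proof as written is incomplete.
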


To show Theorem \ref{t1.2}, we need the following perturbation argument which is motivated by \cite{cp98}.

\begin{lemma}\label{l4.2}
Let $n\ge2$, $\boz\subset\rn$ be a bounded $\mathrm{NTA}$ domain,
and $r_0\in(0,\diam(\boz))$ a constant. Assume that the matrix $A$ satisfies Assumption \ref{a1}.
Let $u\in W^{1,2}(B_\boz(x_0,4r))$ be a solution of the equation
$\mathrm{div}(A\nabla u)=0$ in $B_\boz(x_0,4r)$ with $u=0$
on $B(x_0,4r)\cap\partial\boz$, where $x_0\in\overline{\boz}$ and $r\in(0,r_0/4)$.
Then there exist a function $\tz:=\tz(r)$, $p\in(2,\fz)$,
and a function $v\in W^{1,p}(B_\boz(x_0,r))$ such that
\begin{equation}\label{4.1}
\lf[\fint_{B_\boz(x_0,r)}|\nabla v|^p\,dx\r]^{1/p}
\le C\lf[\fint_{B_\boz(x_0,4r)}|\nabla u|^2\,dx\r]^{1/2}
\end{equation}
and
\begin{align}\label{4.2}
\lf[\fint_{B_\boz(x_0,r)}|\nabla(u-v)|^2\,dx\r]^{1/2}
\le\tz(r)\lf[\fint_{B_\boz(x_0,4r)}|\nabla u|^2\,dx\r]^{1/2},
\end{align}
where $C$ is a positive constant independent of $u,\,v$, $x_0$, and $r$.
\end{lemma}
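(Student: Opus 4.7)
The plan is a frozen-coefficient perturbation. Set $\bar{a}:=\fint_{B_\boz(x_0,2r)}a(y)\,dy$, the constant average of the symmetric part of $A$; it is symmetric and inherits the ellipticity \eqref{1.3} with the same $\mu_0$. Let $v$ be the unique weak solution of $-\mathrm{div}(\bar{a}\nabla v)=0$ in $B_\boz(x_0,2r)$ with $v-u\in W^{1,2}_0(B_\boz(x_0,2r))$, whose existence follows from the Lax--Milgram theorem; note that $v=0$ on $B(x_0,2r)\cap\partial\boz$ since $u$ does.

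To establish \eqref{4.1}, I would apply Lemma \ref{l4.1} to $v$ with the matrix there being the constant $\bar{a}$ (so the BMO anti-symmetric part is identically zero), which yields
\[
\lf[\fint_{B_\boz(x_0,r)}|\nabla v|^p\,dx\r]^{1/p}\le C\lf[\fint_{B_\boz(x_0,2r)}|\nabla v|^2\,dx\r]^{1/2}
\]
for the $p\in(2,\fz)$ supplied by Lemma \ref{l4.1}. The standard energy estimate (test the weak equation for $v$ with $v-u$) then gives $\|\nabla v\|_{L^2(B_\boz(x_0,2r))}\ls\|\nabla u\|_{L^2(B_\boz(x_0,2r))}$, and the NTA doubling of volumes yields $\fint_{B_\boz(x_0,2r)}|\nabla u|^2\,dx\ls\fint_{B_\boz(x_0,4r)}|\nabla u|^2\,dx$, finishing \eqref{4.1}.

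For \eqref{4.2}, set $w:=u-v\in W^{1,2}_0(B_\boz(x_0,2r))$. Using $w$ (extended by zero to $B_\boz(x_0,4r)$) as a test function in the weak formulations of both $u$ and $v$, I would derive
\[
\int_{B_\boz(x_0,2r)}\bar{a}\nabla w\cdot\nabla w\,dx=\int_{B_\boz(x_0,2r)}(\bar{a}-a)\nabla u\cdot\nabla w\,dx-\int_{B_\boz(x_0,2r)}b\,\nabla u\cdot\nabla w\,dx.
\]
Writing $b=\bar{b}+(b-\bar{b})$ with the constant anti-symmetric matrix $\bar{b}:=\fint_{B_\boz(x_0,2r)}b(y)\,dy$, the $\bar{b}$-contribution vanishes: approximating $w$ by $w_k\in C^\fz_\mathrm{c}(B_\boz(x_0,2r))$ in $W^{1,2}$ and integrating by parts against the smooth $w_k$,
\[
\int\bar{b}_{ij}\,\partial_j u\,\partial_i w_k\,dx=-\int u\,\bar{b}_{ij}\partial_i\partial_j w_k\,dx=0
\]
by anti-symmetry of $\bar{b}_{ij}$ versus symmetry of mixed partials; passing $k\to\fz$ gives $\int\bar{b}\nabla u\cdot\nabla w\,dx=0$. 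Bounding the remaining two terms by H\"older's inequality with exponents $(\tfrac{2p}{p-2},p,2)$, the reverse H\"older inequality for $\nabla u$ from Lemma \ref{l4.1} (which upgrades integrability from $L^2$ on $B_\boz(x_0,4r)$ to $L^p$ on $B_\boz(x_0,2r)$), and the John--Nirenberg inequality applied to the BMO extensions of $a$ and $b$ furnished by Remark \ref{r1.1}(ii), I would arrive at \eqref{4.2} with $\tz(r)\ls\|A\|_{\mathrm{BMO}(\boz)}$ (more sharply, bounded by the local BMO semi-norm of $A$ at scale $2r$).

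The main obstacle is the rigorous justification of $\int\bar{b}\nabla u\cdot\nabla w\,dx=0$, which is essential to cancel the otherwise-uncontrolled contribution from the unbounded antisymmetric perturbation $b$; since $u$ is only $W^{1,2}$, this requires the density/integration-by-parts argument above, leveraging the constancy of $\bar{b}$. A secondary technical point is the use of John--Nirenberg on the non-Euclidean sets $B_\boz(x_0,2r)$, which is handled by the BMO-extension property in Remark \ref{r1.1}(ii) that reduces matters to John--Nirenberg on $\rn$.
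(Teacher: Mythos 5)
Your proposal is correct and follows essentially the same freeze-the-symmetric-coefficient perturbation argument as the paper: both define $v$ as the solution of the constant-coefficient problem $\mathrm{div}(\bar a\nabla v)=0$ in $B_\boz(x_0,2r)$ agreeing with $u$ on the boundary, cancel the constant anti-symmetric contribution $\int\bar b\,\nabla u\cdot\nabla(u-v)\,dx=0$, and combine the $L^p$ reverse H\"older estimate for $\nabla u$ from Lemma~\ref{l4.1} with John--Nirenberg (via the Jones extension of Remark~\ref{r1.1}(ii)) to bound $\|A-A_{B_\boz(x_0,2r)}\|_{L^{2p/(p-2)}}$. The only cosmetic differences are that you justify the cancellation by $C^\fz_{\mathrm{c}}$-approximation of $u-v$ instead of the paper's zero-extension/divergence-theorem argument, and you use a three-factor H\"older split in place of the paper's Young-then-H\"older absorption, both of which are equivalent.
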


\begin{proof}
By the assumption that $A$ satisfies Assumption \ref{a1}, we find that $A=a+b$, where
$a:=\{a_{ij}\}_{i,j=1}^n$ is real-valued, symmetric, and measurable, and satisfies \eqref{1.3}, and
$b:=\{b_{ij}\}_{i,j=1}^n$ is real-valued, anti-symmetric, and measurable, and satisfies
$b_{ij}\in\mathrm{BMO}(\boz)$ for any $i,\,j\in\{1,\,\ldots,\,n\}$.

Let $a_0:=\{c_{ij}\}_{i,j=1}^n$, where, for any $i,\,j\in\{1,\,\ldots,\,n\}$,
$$c_{ij}:=\fint_{B_\boz(x_0,2r)}a_{ij}\,dx.$$
Assume that $v\in W^{1,2}(B_\boz(x_0,2r))$
is the solution of the following boundary value problem
\begin{equation}\label{4.3}
\begin{cases}
\mathrm{div}(a_0\nabla v)=0\ \ &\text{in}\ \ B_\boz(x_0,2r),\\
v=u\ \ &\text{on} \ \
\partial B_\boz(x_0,2r).
\end{cases}
\end{equation}
Then, from \eqref{4.3} and the fact that $u-v\in W^{1,2}_0(B_\boz(x_0,2r))$, we deduce that
\begin{align}\label{4.4}
\int_{B_\boz(x_0,2r)}a_0\nabla(u-v)\cdot\nabla(u-v)\,dx
=\int_{B_\boz(x_0,2r)}(a_0-A)\nabla u\cdot\nabla(u-v)\,dx.
\end{align}
Denote by $\wz{u}$ the $W^{1,2}$-extension of $u$ to $\rn$;
namely, $\wz{u}\in W^{1,2}(\rn)$ and $\wz{u}|_{B_\boz(x_0,2r)}=u$.
Furthermore, denote by $\wz{u-v}$ the zero extension
of $(u-v)|_{B_\boz(x_0,2r)}$ to $\rn$. Obviously, $\wz{u-v}\in W^{1,2}(\rn)$
and $\supp(\wz{u-v})\subset B_\boz(x_0,2r)$,
which, combined with the divergence theorem and the assumption that
$$b_{B_\boz(x_0,2r)}:=\lf\{\fint_{B_\boz(x_0,2r)}b_{ij}\,dx\r\}_{i,j=1}^n$$
is anti-symmetric, further implies that
\begin{align*}
\int_{B_\boz(x_0,2r)}b_{B_\boz(x_0,2r)}\nabla u\cdot\nabla(u-v)\,dx
&=\int_{B(x_0,2r)}b_{B_\boz(x_0,2r)}\nabla \wz{u}\cdot\nabla (\wz{u-v})\,dx\\
&=\int_{B(x_0,2r)}\mathrm{div}\lf(b_{B_\boz(x_0,2r)}
\nabla \wz{u}\r)(\wz{u-v})\,dx\\
&\quad-\int_{\partial B(x_0,2r)}
b_{B_\boz(x_0,2r)}\nabla \wz{u}\cdot\boldsymbol{\nu}(\wz{u-v})\,d\sz(x)\\
&=0,
\end{align*}
where $\boldsymbol{\nu}$ denotes the \emph{outward unit normal} to
$\partial B(x_0,2r)$.  By this, \eqref{4.4}, and the definitions of $a_0$ and
$b_{B_\boz(x_0,2r)}$, we find that
\begin{align}\label{4.5}
\int_{B_\boz(x_0,2r)}a_0\nabla(u-v)\cdot\nabla(u-v)\,dx
&=\int_{B_\boz(x_0,2r)}\lf(a_0-A+b_{B_\boz(x_0,2r)}\r)
\nabla u\cdot\nabla(u-v)\,dx\\ \nonumber
&=\int_{B_\boz(x_0,2r)}\lf(A_{B_\boz(x_0,2r)}-A\r)\nabla u\cdot\nabla(u-v)\,dx,
\end{align}
where
$$A_{B_\boz(x_0,2r)}:=\lf\{\fint_{B_\boz(x_0,2r)}(a_{ij}+b_{ij})\,dx\r\}_{i,j=1}^n.$$
Therefore, from \eqref{4.5}, \eqref{1.3}, and the H\"older inequality,
it follows that,
for any given $\uc\in(0,\fz)$, there exists a positive constant $C_{(\uc)}$,
depending on $\uc$, such that
\begin{align}\label{4.6}
&\mu_0\int_{B_\boz(x_0,2r)}|\nabla(u-v)|^2\,dx\\ \nonumber
&\quad\le
\int_{B_\boz(x_0,2r)}\lf|A-A_{B_\boz(x_0,2r)}\r||\nabla u||\nabla(u-v)|\,dx\\ \nonumber
&\quad\le\uc\int_{B_\boz(x_0,2r)}|\nabla(u-v)|^2\,dx+ C_{(\uc)}
\int_{B_\boz(x_0,2r)}\lf|A-A_{B_\boz(x_0,2r)}\r|^2|\nabla u|^2\,dx,
\end{align}
where $\mu_0\in(0,1)$ is as in \eqref{1.3}.
Take $\uc:=\mu_0/2$ in \eqref{4.6}. Then, by \eqref{4.6}, we conclude that
\begin{align}\label{4.7}
\lf[\int_{B_\boz(x_0,2r)}|\nabla(u-v)|^2\,dx\r]^{1/2}\le C_4
\lf[\int_{B_\boz(x_0,2r)}\lf|A-A_{B_\boz(x_0,2r)}\r|^2|\nabla u|^2\,dx\r]^{1/2}.
\end{align}

Moreover, from Lemma \ref{l4.1}, we deduce that there exist positive
constants $\widetilde{p}\in(1,\fz)$ and $C_5\in(0,\fz)$, independent of $u$, $x_0$, and $r$, such that
\begin{align}\label{4.8}
\lf[\fint_{B_\boz(x_0,2r)}
|\nabla u|^{2\widetilde{p}}\,dx\r]^{1/(2\widetilde{p})}\le
C_5\lf[1+\|b\|_{\mathrm{BMO}(\boz)}\r]
\lf[\fint_{B_\boz(x_0,4r)}
|\nabla u|^{2}\,dx\r]^{1/2}.
\end{align}
Furthermore, by \cite[Theorem 1]{j80}, we know that
there exist an $\wz{A}\in\mathrm{BMO}(\rn;\rr^{n^2})$
and a positive constant $C$, independent of $x_0$ and $r$,
such that
\begin{equation}\label{4.9}
\wz{A}|_{B_\boz(x_0,2r)}=A\quad \text{and}\quad
\|\wz{A}\|_{\mathrm{BMO}(\rn;\rr^{n^2})}\le
C\|A\|_{\mathrm{BMO}(B_\boz(x_0,2r);\rr^{n^2})}.
\end{equation}
From the H\"older inequality and the well-known
John--Nirenberg inequality on $\mathrm{BMO}(\rn)$
(see, for instance, \cite{g14a,St93}), it follows that
\begin{align*}
&\lf[\fint_{B_\boz(x_0,2r)}
\lf|A-A_{B_\boz(x_0,2r)}\r|^{2\widetilde{p}'}\,dx\r]^{1/(2\widetilde{p}')}\\
&\quad\ls\lf[\fint_{B(x_0,2r)}
\lf|\wz{A}-\wz{A}_{B(x_0,2r)}\r|^{2\widetilde{p}'}\,dx\r]^{1/(2\widetilde{p}')}
+\lf[\fint_{B(x_0,2r)}
\lf|\wz{A}_{B(x_0,2r)}-A_{B_\boz(x_0,2r)}
\r|^{2\widetilde{p}'}\,dx\r]^{1/(2\widetilde{p}')}\\
&\quad\ls\|\wz{A}\|_{\mathrm{BMO}(\rn;\rr^{n^2})}
+\lf|\wz{A}_{B(x_0,2r)}-A_{B_\boz(x_0,2r)}\r|\\
&\quad\ls\|\wz{A}\|_{\mathrm{BMO}(\rn;\rr^{n^2})}
+\fint_{B_\boz(x_0,2r)}\lf|A-\wz{A}_{B(x_0,2r)}\r|\,dx\\
&\quad\ls\|\wz{A}\|_{\mathrm{BMO}(\rn;\rr^{n^2})}
+\fint_{B(x_0,2r)}\lf|\wz{A}-\wz{A}_{B(x_0,2r)}\r|\,dx
\ls\|\wz{A}\|_{\mathrm{BMO}(\rn;\rr^{n^2})},
\end{align*}
where $\widetilde{p}'\in(1,\fz)$ is given by
$1/\widetilde{p}+1/\widetilde{p}'=1$, which,
together with \eqref{4.9}, further implies that
there exists a positive constant $C_6$, independent of $x_0$, $r$, and $A$,
such that
$$\lf[\fint_{B_\boz(x_0,2r)}
\lf|A-A_{B_\boz(x_0,2r)}\r|^{2\widetilde{p}'}\,dx\r]^{1/(2\widetilde{p}')}
\le C_6\|A\|_{\mathrm{BMO}(B_\boz(x_0,2r);\rr^{n^2})}.
$$
By this, \eqref{4.7} and \eqref{4.8}, we conclude that
\begin{align}\label{4.10}
&\lf[\fint_{B_\boz(x_0,2r)}
|\nabla (u-v)|^2\,dx\r]^{1/2}\\ \nonumber
&\quad\le C_4\lf[\fint_{B_\boz(x_0,2r)}
\lf|A-A_{B_\boz(x_0,2r)}\r|^{2\widetilde{p}'}\,dx\r]^{1/(2\widetilde{p}')}
\lf[\fint_{B_\boz(x_0,2r)}
|\nabla u|^{2\widetilde{p}}\,dx\r]^{1/(2\widetilde{p})}\\ \nonumber
&\quad\le C_4C_5C_6\lf[1+\|b\|_{\mathrm{BMO}(\boz)}\r]
\|A\|_{\mathrm{BMO}(B_\boz(x_0,2r);\rr^{n^2})}\lf[\fint_{B_\boz(x_0,4r)}
|\nabla u|^{2}\,dx\r]^{1/2}\\ \nonumber
&\quad\le\tz(r)\lf[\fint_{B_\boz(x_0,4r)}
|\nabla u|^{2}\,dx\r]^{1/2},
\end{align}
where
\begin{equation}\label{4.11}
\tz(r):=C_4C_5C_6\lf[1+\|b\|_{\mathrm{BMO}(\boz)}\r]
\sup_{B(x,2t)\subset\boz,\,t\in(0,r]}\fint_{B(x,2t)}
\lf|A-\fint_{B(x,2t)}A\,dz\r|\,dy.
\end{equation}
Thus, \eqref{4.2} holds true.
Furthermore, from the known regularity theory of
second order elliptic equations
(see, for instance, \cite[Chapter 1]{k94} and \cite[Chapter V]{g83}),
we deduce that there exists a $p\in(2,\fz)$ such that
$$\lf[\fint_{B_\boz(x_0,2r)}
|\nabla v|^p\,dx\r]^{1/p}\ls\lf[\fint_{B_\boz(x_0,4r)}
|\nabla v|^2\,dx\r]^{1/2},
$$
which, combined with \eqref{4.10}, \eqref{4.11}, and the fact that
$A\in \mathrm{BMO}(\boz;\rr^{n^2})$,
further implies that \eqref{4.1} holds true.
This finishes the proof of Lemma \ref{l4.2}.
\end{proof}

Furthermore, to prove Theorem \ref{t1.2}, we need the following conclusion
for the constant coefficient boundary value problems, which was established in \cite[Lemma 4.1]{sh05a}
(see also \cite[Lemma 4.1]{g12}).

\begin{lemma}\label{l4.3}
Let $n\ge2$, $\boz\subset\rn$ be a bounded Lipschitz domain,
and $a_0:=\{a_{ij}\}_{i,j=1}^n$ a symmetric constant coefficient matrix
that satisfies \eqref{1.3}.
Assume that $v\in W^{1,2}(B_\boz(x_0,2r))$ is a weak solution
of the equation $\mathrm{div}(a_0\nabla v)=0$ in $B_\boz(x_0,2r)$ with
$v=0$ on $B(x_0,2r)\cap\partial\boz$, where $B(x_0,r)$ is a ball such that $r\in(0,r_0/4)$
and either $x_0\in\partial\boz$ or $B(x_0,2r)\subset\boz$, and $r_0\in(0,\diam(\boz))$
is a constant. Then the weak reverse H\"older inequality
\begin{equation*}
\lf[\fint_{B_\boz(x_0,r)}|\nabla v|^p\,dx\r]^{\frac1p}
\le C\lf[\fint_{B_\boz(x_0,2r)}|\nabla v|^2\,dx\r]^{\frac12}
\end{equation*}
holds true for $p:= 3+\uc$ when $n\ge3$, or $p:=4+\uc$ when $n=2$,
where  $C$ and $\uc$ are positive constants depending only on $n$, the Lipschitz constant of $\boz$,
and $\mu_0$ in \eqref{1.3}.
\end{lemma}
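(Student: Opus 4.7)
My plan is to reduce the statement to the case of harmonic functions on a Lipschitz domain and then invoke the sharp $L^p$ boundary regularity theory for the Laplacian due to Jerison--Kenig \cite{jk95}, which is exactly the mechanism behind \cite[Lemma 4.1]{sh05a}. The first step is to exploit the fact that $a_0$ is a constant symmetric matrix satisfying \eqref{1.3}, hence positive definite, so the matrix square root $a_0^{1/2}$ is well-defined and invertible with $\|a_0^{\pm 1/2}\|\le \mu_0^{-1/2}$. Setting $y:=a_0^{-1/2}x$ and $\wz v(y):=v(a_0^{1/2}y)$, a direct chain rule computation gives $\mathrm{div}_x(a_0\nabla_xv)=\Delta_y\wz v$, so $\wz v$ is harmonic on the image $\wz{B}_{\wz\boz}$ of $B_\boz(x_0,2r)$ under $x\mapsto a_0^{-1/2}x$. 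Since this linear transformation is bi-Lipschitz with bounds depending only on $\mu_0$, the image $\wz\boz$ is again a bounded Lipschitz domain whose Lipschitz constant is controlled by the original one and by $\mu_0$, and the reverse H\"older inequality for $\nabla v$ on $B_\boz(x_0,r)$ is equivalent, up to loss of universal constants, to the analogous inequality for $\nabla\wz v$ on $\wz{B}_{\wz\boz}$.

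Hence it remains to prove the weak reverse H\"older inequality with exponent $p=3+\uc$ (for $n\ge3$) or $p=4+\uc$ (for $n=2$) for a function $\wz v$ that is harmonic in $\wz{B}_{\wz\boz}(\wz x_0,2r)$ and vanishes on $B(\wz x_0,2r)\cap\partial\wz\boz$ when $\wz x_0\in\partial\wz\boz$. In the purely interior case $B(\wz x_0,2r)\subset\wz\boz$, the conclusion is trivial because interior gradient estimates for harmonic functions give the bound for any $p\in(0,\fz)$. In the boundary case, I would follow the localization scheme of \cite{sh05a}: flatten the boundary locally via a Lipschitz graph, use a reflection/even extension adapted to the Dirichlet condition to pass from the boundary problem to a global harmonic-type problem in a Lipschitz domain, and then apply the Jerison--Kenig estimates that bound the non-tangential maximal function of $\nabla\wz v$ on $\partial\wz\boz$ in $L^p$ by its tangential derivative in $L^p$ for $p$ in the stated range. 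Converting these non-tangential boundary estimates into solid $L^p$ gradient control on the half of the ball via a good-$\lambda$ argument and integration against the non-tangential cones yields precisely the reverse H\"older inequality claimed.

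The main obstacle is the sharpness of the exponent: without the deep Jerison--Kenig theorem one can only reach the Meyers-type exponent $2+\uc$, and producing the value $3+\uc$ for $n\ge3$ (respectively $4+\uc$ for $n=2$) requires both the Rellich-type identities on Lipschitz graphs and, in the planar case, the use of harmonic conjugates together with $H^1$--$\mathrm{BMO}$ duality. Since the lemma is quoted verbatim from \cite[Lemma 4.1]{sh05a}, no new ideas are needed beyond the above reduction; the only additional check is that the constant-coefficient, symmetric reduction preserves the quantitative dependence of constants on $n$, the Lipschitz character of $\boz$, and $\mu_0$, which is immediate from the bi-Lipschitz nature of $x\mapsto a_0^{-1/2}x$.
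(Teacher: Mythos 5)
The paper does not actually prove Lemma \ref{l4.3}; it simply cites \cite[Lemma 4.1]{sh05a} (and \cite[Lemma 4.1]{g12}) and moves on. Your proposal is an outline of how Shen's result is obtained, and the two central reductions you identify --- normalizing the constant symmetric matrix $a_0$ to the identity via the affine change of variables $y=a_0^{-1/2}x$, and then invoking the Jerison--Kenig sharp $L^p$ theory for the Laplacian in Lipschitz domains to reach $p<3+\uc$ when $n\ge3$ and $p<4+\uc$ when $n=2$ --- are indeed the correct mechanism. The change-of-variables computation, the bi-Lipschitz control of the transformed domain, and the triviality of the interior case are all fine, and the remark that the $n=2$ endpoint requires harmonic conjugates and $H^1$--$\mathrm{BMO}$ duality is accurate.

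One step in your sketch as written would not go through as stated: the proposal to ``flatten the boundary locally via a Lipschitz graph, use a reflection/even extension adapted to the Dirichlet condition to pass to a global harmonic-type problem.'' For a homogeneous Dirichlet condition the relevant reflection is odd, not even, and more importantly harmonic reflection fails across a non-flat Lipschitz graph; after flattening, the equation is no longer the Laplacian but a divergence-form operator with merely bounded measurable coefficients, so the reflected function is not harmonic and the Jerison--Kenig range is no longer available. In Shen's actual argument there is no reflection: after reducing to the Laplacian one uses the Jerison--Kenig $W^{1,p}$ solvability of the inhomogeneous Dirichlet problem $\Delta u=\mathrm{div}\,\mathbf{f}$ in $\boz$ (valid precisely for $\frac32-\uc<p<3+\uc$ when $n\ge3$, $\frac43-\uc<p<4+\uc$ when $n=2$), together with a cutoff/localization and a non-tangential maximal function estimate on the surface ball, to obtain the local reverse H\"older bound directly. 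If you replace the reflection device with that localized $W^{1,p}$-estimate-plus-cutoff argument, your proposal matches the cited proof.
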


Now, we prove Theorem \ref{t1.2} via using Theorem \ref{t3.1} and Lemmas \ref{l4.2} and \ref{l4.3}.

\begin{proof}[Proof of Theorem \ref{t1.2}]
We first show (i). Let $A$ satisfy Assumption \ref{a1} and $u$ be the weak
solution of the following Dirichlet problem
\begin{equation}\label{4.12}
\begin{cases}
-\mathrm{div}(A\nabla u)=\mathrm{div}(\mathbf{f})\ \ &\text{in}\ \ \boz,\\
u=0 \ \ &\text{on}\ \ \partial\boz.
\end{cases}
\end{equation}
Based on Lemma \ref{l3.1}, to prove \eqref{1.11}, it suffices to show that
there exists a positive constant $\uc_0\in(0,\fz)$, depending only on $n$
and the Lipschitz constant of $\boz$, such that, for any given $p\in[2,3+\uc_0)$
when $n\ge3$, or $p\in[2,4+\uc_0)$ when $n=2$,
there exists a $\dz_0\in(0,\fz)$, depending only on $n$, $p$, and the Lipschitz constant of $\boz$,
such that, if $A$ satisfies the $(\dz,R)$-$\mathrm{BMO}$ condition for some $\dz\in(0,\dz_0)$
and $R\in(0,\fz)$, or $A\in\mathrm{VMO}(\boz)$, then \eqref{1.11} holds true.

By the definition of $\mathrm{VMO}(\boz)$,
we conclude that, if $A\in\mathrm{VMO}(\boz)$, then there exists an $r_1\in(0,\fz)$ such that,
for any $r\in(0,r_1)$, $\tz(r)<\uc_0/2$, where $\tz(r)$ and $\uc_0$ are, respectively, as in \eqref{4.11} and Theorem \ref{t3.1}.
Let $B(x_0,r)\subset\rn$ be such
that $r\in(0,\min\{r_0,r_1\}/4)$ and either $x_0\in\partial\boz$ or $B(x_0,2r)\subset\boz$,
where $r_0\in(0,\diam(\boz))$ is as in Lemma \ref{l4.3}.
Assume that $v\in W^{1,2}(B_\boz(x_0,2r))$ is a weak solution of the equation
$\mathrm{div}(A\nabla v)=0$ in $B_\boz(x_0,2r)$ with
$v=0$ on $B(x_0,2r)\cap\partial\boz$. Let $w\in W^{1,2}(B_\boz(x_0,2r))$ be a weak solution
of the equation $\mathrm{div}(a_0\nabla w)=0$ in $B_\boz(x_0,2r)$ with
$w=v$ on $\partial B_\boz(x_0,2r)$, where $a_0:=\{c_{ij}\}_{i,j=1}^n$ with
$c_{ij}:=\fint_{B_\boz(x_0,2r)}a_{ij}\,dx$ for any $i,\,j\in\{1,\,\ldots,\,n\}$.
Then, from Lemmas \ref{l4.2} and \ref{l4.3}, it follows that
\begin{equation}\label{4.13}
\lf[\fint_{B_\boz(x_0,r)}|\nabla w|^p\,dx\r]^{\frac1{p}}
\ls\lf[\fint_{B_\boz(x_0,2r)}|\nabla v|^2\,dx\r]^{\frac12}
\end{equation}
and
\begin{align}\label{4.14}
\lf[\fint_{B_\boz(x_0,r)}|\nabla(v-w)|^2\,dx\r]^{\frac12}
\le\tz(r)\lf[\fint_{B_\boz(x_0,2r)}|\nabla v|^2\,dx\r]^{\frac12},
\end{align}
where $\tz(r)$ is as in \eqref{4.11}.
By the well-known John--Nirenberg inequality on $\mathrm{BMO}(\boz)$
(see, for instance, \cite{g14a,St93}), we conclude that
there exists a $\dz_0\in(0,\fz)$ sufficiently small such that, if
$A$ satisfies the $(\dz,R)$-$\mathrm{BMO}$ condition for some $\dz\in(0,\dz_0)$
and $R\in(0,\fz)$, then, for any $r\in(0,r_0)$,
$\tz(r)<\uc_0/2$, where $\uc_0$ is as in Theorem \ref{t3.1}.
From this, \eqref{4.13}, \eqref{4.14}, $r\in(0,\min\{r_0,r_1\}/4)$,
and Theorem \ref{t3.1} with $\omega\equiv1$, we deduce that,
if $A$ satisfies the $(\dz,R)$-$\mathrm{BMO}$ condition for some $\dz\in(0,\dz_0)$
and $R\in(0,\fz)$, or $A\in\mathrm{VMO}(\boz)$, then
$|\nabla v|\in L^p(B_\boz(x_0,2r))$ and
\begin{equation}\label{4.15}
\lf[\fint_{B_\boz(x_0,r)}|\nabla v|^{p}\,dx\r]^{\frac1p}
\ls\lf[\fint_{B_\boz(x_0,2r)}|\nabla v|^2\,dx\r]^{\frac12},
\end{equation}
which, together with Theorem \ref{t1.1}
and the fact that the Dirichlet problem $(D)_2$ is uniquely solvable,
further implies that \eqref{1.11} holds true in the case $p\in[2,3+\uc_0)$ when $n\ge3$,
or $p\in[2,4+\uc_0)$ when $n=2$. This finishes the proof of (i).

Now, we prove (ii). Let $\uc_0$ be as in (i) and $u$ the weak solution of the
Dirichlet problem \eqref{4.12} with $\mathbf{f}\in L^p_\omega(\boz;\rn)$.
We first assume that $p\in[2,p_0)$
and $\omega\in A_{\frac{p}{p_0'}}(\rn)\cap RH_{(\frac{p_0}{p})'}(\rn)$,
where $p_0:=3+\uc_0$ when $n\ge3$, or $p_0:=4+\uc_0$ when $n=2$.
Then, by \eqref{4.15} and Theorem \ref{t1.1}, we conclude that
there exists a $\dz_0\in(0,\fz)$, depending only on $n$, $p$,
$[\omega]_{A_{\frac{p}{p_0'}}(\rn)}$,
$[\omega]_{RH_{(\frac{p_0}{p})'}(\rn)}$, and the Lipschitz constant of $\boz$,
such that, if $A$ satisfies the $(\dz,R)$-$\mathrm{BMO}$
condition for some $\dz\in(0,\dz_0)$
and $R\in(0,\fz)$, or $A\in\mathrm{VMO}(\boz)$, then \eqref{1.12} holds true.

Next, we assume that $p\in(p_0',2)$
and $\omega\in A_{\frac{p}{p_0'}}(\rn)\cap RH_{(\frac{p_0}{p})'}(\rn)$.
Then, from Lemma \ref{l3.2}(v), we deduce that $p'\in(2,p_0)$
and $\omega^{1-p'}\in A_{\frac{p'}{p_0'}}(\rn)\cap RH_{(\frac{p_0}{p'})'}(\rn)$.
Let $\mathbf{g}\in L^{p'}_{\omega^{1-p'}}(\boz;\rn)$ and $v$ be the weak
solution of the following Dirichlet problem
\begin{equation*}
\begin{cases}
-\mathrm{div}(A^\ast\nabla v)=\mathrm{div}(\mathbf{g})\ \ &\text{in}\ \ \boz,\\
v=0 \ \ &\text{on}\ \ \partial\boz,
\end{cases}
\end{equation*}
where $A^\ast$ denotes the transpose of $A$.
By the assumption that $A$ satisfies Assumption \ref{a1}, we find that
$A^\ast$ also satisfies Assumption \ref{a1}. Thus, we have
\begin{equation}\label{4.16}
\|\nabla v\|_{L^{p'}_{\omega^{1-p'}}(\boz;\rn)}\ls\|\mathbf{g}\|_{L^{p'}_{\omega^{1-p'}}(\boz;\rn)}.
\end{equation}
Moreover,
$$\int_\boz\mathbf{g}\cdot \nabla u\,dx=-\int_\boz A^\ast\nabla v\cdot\nabla u\,dx
=-\int_\boz A\nabla u\cdot\nabla v\,dx=
\int_\boz \mathbf{f}\cdot\nabla v\,dx,
$$
which, combined with \eqref{4.16} and the H\"older inequality, further implies that
\begin{align}\label{4.17}
\|\nabla u\|_{L^{p}_\omega(\boz;\rn)}&=\sup_{\|\mathbf{g}
\|_{L^{p'}_{\omega^{1-p'}}(\boz;\rn)}\le1}
\lf|\int_\boz\mathbf{g}\cdot\nabla u\,dx\r|
=\sup_{\|\mathbf{g}\|_{L^{p'}_{\omega^{1-p'}}(\boz;\rn)}\le1}
\lf|\int_\boz\mathbf{f}\cdot \nabla v\,dx\r|\\
&\le\sup_{\|\mathbf{g}\|_{L^{p'}_{\omega^{1-p'}}(\boz;\rn)}\le1}\|\mathbf{f}\|_{L^{p}_\omega(\boz;\rn)}
\|\nabla v\|_{L^{p'}_{\omega^{1-p'}}(\boz;\rn)}
\nonumber\\
&\ls\sup_{\|\mathbf{g}\|_{L^{p'}_{\omega^{1-p'}}(\boz;\rn)}\le1}
\|\mathbf{f}\|_{L^{p}_\omega(\boz;\rn)}
\|\mathbf{g}\|_{L^{p'}_{\omega^{1-p'}}(\boz;\rn)}
\ls\|\mathbf{f}\|_{L^{p}_\omega(\boz;\rn)}.\nonumber
\end{align}
From this, it follows that \eqref{1.12} holds true when $p\in(p_0',2)$.
This finishes the proof of (ii) and hence of Theorem \ref{t1.2}.
\end{proof}

\section{Proof of Theorem \ref{t1.3}\label{s5}}

In this section, we give the proofs of Theorem \ref{t1.3}
and Corollary \ref{c1.1} by using Theorem \ref{t1.1} and some properties of
quasi-convex domains. We begin with the following Lemma \ref{l5.1}.

\begin{lemma}\label{l5.1}
Let $n\ge2$ and $\boz\subset\rn$ be a bounded $\mathrm{NTA}$ domain.
Assume that $p\in(2,\fz)$, $\boz$ is a
$(\dz,\sz,R)$ quasi-convex domain for some $\dz,\ \sz\in(0,1)$
and $R\in(0,\fz)$, $a_0:=\{a_{ij}\}_{i,j=1}^n$ a symmetric
constant coefficient matrix that satisfies \eqref{1.3}.
Let $v\in W^{1,2}(B_\boz(x_0,2r))$ be a weak solution
of the equation $\mathrm{div}(a_0\nabla v)=0$ in $B_\boz(x_0,2r)$ with
$v=0$ on $B(x_0,2r)\cap\partial\boz$, where $B(x_0,r)$ is a ball such that
$r\in(0,r_0/4)$ and either $x_0\in\partial\boz$ or $B(x_0,2r)\subset\boz$,
and $r_0\in(0,R)$ is a constant. Then there exists a positive constant $\dz_0\in(0,1)$,
depending only on $n$, $p$, and $\boz$, such that, if $\boz$ is a
$(\dz,\sz,R)$ quasi-convex domain for some $\dz\in(0,\dz_0)$, $\sz\in(0,1)$,
and $R\in(0,\fz)$, then the weak reverse H\"older inequality
\begin{equation}\label{5.1}
\lf[\fint_{B_\boz(x_0,r)}|\nabla v|^p\,dx\r]^{\frac1p}
\le C\lf[\fint_{B_\boz(x_0,2r)}|\nabla v|^2\,dx\r]^{\frac12}
\end{equation}
holds true, where $C$ is a positive constant depending only on $n$, $\dz$, $\sz$,
$R$, and $\diam(\boz)$.
\end{lemma}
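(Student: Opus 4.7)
The plan is to apply the real-variable argument of Theorem~\ref{t3.1} to $F := |\nabla v|$ with $f := 0$, producing the required splitting $v = w + (v - w)$ via comparison with a harmonic function $w$ on a convex approximation of $B(x_0, 2r) \cap \boz$. The crucial input is that on bounded convex domains the Dirichlet problem for constant-coefficient elliptic operators admits $W^{1,p}$ estimates for every $p \in (1, \infty)$ (Grisvard-type regularity), while the quasi-convex hypothesis provides a convex domain $V$ within Hausdorff distance $\dz r$ of $\partial(B(x_0, 2r) \cap \boz)$.

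First I would reduce to $a_0 = I_n$ via the linear change of coordinates $y := a_0^{1/2} x$, which converts the equation into $\Delta \widetilde{v} = 0$ and preserves the quasi-convex property of $\boz$ up to parameters depending only on $\mu_0$. The case $B(x_0, 2r) \subset \boz$ follows from the standard interior reverse H\"older estimate for harmonic functions, so I focus on $x_0 \in \partial \boz$. Using Definition~\ref{d2.2}(b) and Remark~\ref{r2.2}(ii), I would choose a convex domain $V \supset B(x_0, 2r) \cap \boz$ with $d_H(\partial(B(x_0, 2r) \cap \boz), \partial V) \le 2 \dz r$. Extending $v$ by zero across $B(x_0, 2r) \cap \partial \boz$ (permitted since $v = 0$ there), define $w \in W^{1,2}(V)$ by $\Delta w = 0$ in $V$ with $w = v$ on $\partial V$; equivalently, $v - w \in W^{1,2}_0(V)$. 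By the convex-domain $W^{1,p}$ theory (as used, for instance, in \cite[Section~3]{jlw10}), combined with Meyers-type self-improvement and rescaling, the harmonic function $w$ then satisfies
\begin{equation*}
\lf[\fint_{B_\boz(x_0, r)} |\nabla w|^p\, dx\r]^{1/p} \ls \lf[\fint_{B_\boz(x_0, 2r)} |\nabla w|^2\, dx\r]^{1/2}.
\end{equation*}

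For the error, $v - w$ vanishes on $\partial V$ and is harmonic on the subset $B(x_0, 2r) \cap \boz \subset V$, so a Caccioppoli argument together with $|V \setminus (B(x_0, 2r) \cap \boz)| \ls \dz r^n$ and the Hardy inequality on the convex domain $V$ should produce
\begin{equation*}
\lf[\fint_{B_\boz(x_0, r)} |\nabla(v - w)|^2\, dx\r]^{1/2} \ls \dz^{\tau} \lf[\fint_{B_\boz(x_0, 2r)} |\nabla v|^2\, dx\r]^{1/2}
\end{equation*}
for some $\tau > 0$ depending only on $n$. Setting $F_B := |\nabla w|$ and $R_B := |\nabla(v - w)|$ and applying Theorem~\ref{t3.1} with $p_1 := 2$, $p_2 := p$, and $\omega \equiv 1$, one obtains \eqref{5.1} provided $\dz_0$ is chosen so small that $\dz^{\tau}$ lies below the smallness threshold $\uc_0$ of Theorem~\ref{t3.1} for all $\dz \in (0, \dz_0)$.

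The hardest step will be this error estimate for $v - w$. The zero extension of $v$ across $\partial \boz$ is only in $W^{1,2}$, not harmonic on the shell $V \setminus (B(x_0, 2r) \cap \boz)$, so a direct energy estimate only yields a bound proportional to $\|\nabla v\|_{L^2(V \setminus (B \cap \boz))}$, which must still be upgraded to a genuine $\dz^{\tau}$-gain. Achieving this requires combining the small width of the shell (of order $\dz r$) with the Hardy inequality on the convex domain $V$ and the zero trace of $v$ on $\partial \boz \cap B(x_0, 2r)$; this is the delicate geometric step that genuinely uses the full strength of the quasi-convex hypothesis rather than mere Lipschitz regularity of $\partial \boz$.
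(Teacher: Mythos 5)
Your overall strategy matches the paper's: reduce to $a_0 = I$ by a linear change of coordinates, dispose of the interior case by standard estimates, and in the boundary case compare $v$ with a harmonic function $w$ on a convex approximation of $B_\Omega(x_0, 2r)$, then feed the decomposition $|\nabla v| \le |\nabla w| + |\nabla(v-w)|$ into the real-variable Theorem~\ref{t3.1}. Your treatment of the "good part" $w$ is a legitimate variant: you invoke convex-domain $W^{1,p}$ regularity for all $p$, whereas the paper uses the $L^\infty$ gradient bound of Banerjee and Lewis \cite{bl14} on convex domains to get \eqref{5.4}; either route works for establishing the reverse H\"older inequality for $\nabla w$.

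The genuine gap is in the error estimate for $\nabla(v-w)$, which you yourself flag as the delicate step but leave as a heuristic ("Caccioppoli $+$ Hardy $+$ small shell width"). That combination alone does not yield a $\delta^\tau$ gain, and the quantity you propose to control, $\|\nabla v\|_{L^2(V\setminus(B\cap\Omega))}$, is identically zero since $\tilde v$ is the zero extension — the actual error lives in the thin \emph{interior} collar $\Omega_{2\delta s}\cap B(x_0,s)$, not outside $\Omega$. The paper's argument shows what the missing ingredient is: besides the Poincar\'e-type inequality in the collar (Lemma~\ref{l5.3}(ii)), one crucially needs the \emph{self-improved higher integrability} of $\nabla v$ from the Caccioppoli inequality (Lemma~\ref{l4.1} gives $\nabla v\in L^p_{\loc}$ for some $p>2$). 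It is the interpolation $\|\nabla v\|_{L^2(\text{collar})}\lesssim |\text{collar}|^{1/2-1/p}\|\nabla v\|_{L^p}$, together with $|\text{collar}|\lesssim\delta s^n$ from Lemma~\ref{l5.3}(i), that produces the factor $\delta^{\epsilon}$ with $\epsilon = 1/2 - 1/p$; the Hardy/Poincar\'e inequality by itself only trades $\|v\|_{L^2(\text{collar})}$ for $\delta s\,\|\nabla v\|_{L^2(\text{collar})}$ with no gain over the full energy. Moreover, even with this in hand, the energy identity $\int_{V_s}|\nabla(w-\tilde v)|^2 = -\int_{B_\Omega(x_0,s)}\nabla v\cdot\nabla(w-v)$ does not immediately localize to the collar; the paper introduces the cutoff $\theta_{\delta s}$ (supported away from $\partial\Omega$, with gradient concentrated in the collar) precisely to split this integral and isolate the contribution that can be bounded by the collar norms of $v$ and $\nabla v$. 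Without that cutoff construction and the $L^p$-interpolation step, your sketch does not close.
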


To show Lemma \ref{l5.1}, we need the following Lemma \ref{l5.2},
which is a special case of Theorem \ref{t3.1}.

\begin{lemma}\label{l5.2}
Let $n\ge2$, $\boz\subset\rn$ be a bounded $\mathrm{NTA}$ domain,
$p_1,\,p_2\in[1,\fz)$ with $p_2>p_1$,
$F\in L^{p_1}(\boz)$, and $q\in(p_1,p_2)$.
Suppose that, for any ball $B:=B(x_B,r_B)\subset\rn$ having the property
that $|B|\le \bz_1|\boz|$ and either $2B\subset\boz$ or $x_B\in\partial\boz$,
there exist two measurable functions $F_B$ and $R_B$ on $2B$
such that $|F|\le|F_B|+|R_B|$ on $2B\cap\boz$,
\begin{align*}\label{5.1e}
\lf(\fint_{2B_\boz}|R_B|^{p_2}\,dx\r)^{\frac1{p_2}}
\le C_7\lf(\fint_{\bz_2 B_\boz}|F|^{p_1}\,dx\r)^{\frac1{p_1}}
\end{align*}
and
\begin{align*}
\lf(\fint_{2B_\boz}|F_B|^{p_1}\,dx\r)^{\frac1{p_1}}
\le \uc\lf(\fint_{\bz_2 B_\boz}|F|^{p_1}\,dx\r)^{\frac1{p_1}},
\end{align*}
where $C_7,\,\uc$, and $\bz_1<1<\bz_2$ are positive
constants independent of $F,\,R_B,\,F_B$, and $B$.
Then there exists a positive constant $\uc_0$,
depending only on $C_7,\,n,\,p_1,\,p_2,\,q,\,\bz_1$, and $\bz_2$, such that,
if $\uc\in[0,\uc_0)$, then
\begin{align*}
\lf(\fint_{\boz}|F|^q\,dx\r)^{\frac1q}\le
C\lf(\fint_{\boz}|F|^{p_1}\,dx\r)^{\frac1{p_1}},
\end{align*}
where $C$ is a positive constant depending only on $C_7$,
$\uc_0$, $n$, $p_1,\,p_2$, $q$, $\bz_1$, and $\bz_2$.
\end{lemma}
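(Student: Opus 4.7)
The plan is to derive Lemma~\ref{l5.2} as a direct specialization of Theorem~\ref{t3.1}, which has already been recorded (without proof) in the excerpt and is attributed to \cite{sh07,sh18,ycyy20}. Concretely, I will invoke Theorem~\ref{t3.1} with the trivial weight $\omega\equiv1$ on $\rn$ and with the auxiliary source function $f\equiv0$ on $\boz$. The constant weight $\omega\equiv1$ lies in $A_{q/p_1}(\rn)\cap RH_s(\rn)$ for every admissible exponent $q/p_1$ and every $s\in((p_2/q)',\fz]$; one may, for example, simply take $s:=\fz$. In particular $[\omega]_{A_{q/p_1}(\rn)}=[\omega]_{RH_s(\rn)}=1$, so any constant appearing in Theorem~\ref{t3.1} that depends on these characteristics becomes a constant depending only on $C_7$, $n$, $p_1$, $p_2$, $q$, $\bz_1$, and $\bz_2$, which is exactly the dependence declared in Lemma~\ref{l5.2}.

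Next, I will verify that the two hypotheses \eqref{3.1} and \eqref{3.2} of Theorem~\ref{t3.1} collapse to the hypotheses of Lemma~\ref{l5.2} under the substitution $f\equiv0$. For every ball $\wz{B}\supset B$ one has $\fint_{\wz{B}_\boz}|f|^{p_1}\,dx=0$, so the supremum terms on the right-hand sides of \eqref{3.1} and \eqref{3.2} vanish. With $C_1:=C_7$ and with the same $\bz_1,\bz_2,\uc$, the remaining inequalities coincide verbatim with the two displayed bounds on $R_B$ and $F_B$ assumed in Lemma~\ref{l5.2}. The pointwise inequality $|F|\le|F_B|+|R_B|$ on $2B\cap\boz$ is identical in both statements, and the geometric restriction $|B|\le\bz_1|\boz|$ together with ``$2B\subset\boz$ or $x_B\in\partial\boz$'' is likewise identical.

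With all hypotheses matched, Theorem~\ref{t3.1} furnishes a threshold $\uc_0\in(0,\fz)$, depending only on $C_7$, $n$, $p_1$, $p_2$, $q$, $\bz_1$, and $\bz_2$, such that for any $\uc\in[0,\uc_0)$,
$$\lf[\frac{1}{|\boz|}\int_\boz|F|^q\,dx\r]^{1/q}\le C\lf\{\lf(\frac{1}{|\boz|}\int_\boz|F|^{p_1}\,dx\r)^{1/p_1}+\lf(\frac{1}{|\boz|}\int_\boz|f|^q\,dx\r)^{1/q}\r\},$$
where $C$ depends only on the same parameters. Setting $f\equiv0$ eliminates the last summand and yields precisely the conclusion of Lemma~\ref{l5.2}.

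Because the result is a genuine corollary of Theorem~\ref{t3.1}, there is no substantive obstacle in the argument; the only thing one has to check is the matching of hypotheses and the bookkeeping of the constants, namely that removing the weight and the source term eliminates the corresponding dependencies in the final constant. For this reason, I would present the proof of Lemma~\ref{l5.2} as a brief deduction rather than as an independent argument, and note that a standalone proof would simply repeat the Calder\'on--Zygmund-type stopping-time / good-$\lambda$ scheme of \cite{sh07,sh18,ycyy20} in the unweighted, source-free case.
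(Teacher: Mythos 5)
Your deduction is correct and matches the paper exactly: the paper states Lemma \ref{l5.2} as a special case of Theorem \ref{t3.1} with no separate proof, which is precisely your specialization $\omega\equiv1$ (so both weight characteristics equal $1$ and one may take $s=\fz$) and $f\equiv0$ (so the supremum terms in \eqref{3.1} and \eqref{3.2} vanish). The only bookkeeping point worth adding is to fix $C_2$ (say $C_2:=C_7$ or $C_2:=1$), since it is immaterial once the $f$-terms vanish but formally enters the dependence of $\uc_0$ in Theorem \ref{t3.1}.
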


Moreover, we also need the following Lemma \ref{l5.3}, which was established
in \cite[Lemmas 3.4 and 3.5]{z19}.

\begin{lemma}\label{l5.3}
Let $n\ge2$ and $\boz\subset\rn$ be a bounded $(\dz,\sz,R)$
quasi-convex domain for some $\dz,\ \sz\in(0,1)$ and $R\in(0,\fz)$.
Assume that $x_0\in\partial\boz$, $r\in(0,R/4)$, and $V_{4r}$
is the convex hull of $B_\boz(x_0,4r)$.
For any $t\in(0,1)$, let
\begin{equation}\label{5.2}
\boz_{tr}:=\{x\in\boz:\ \dist(x,\partial\boz)<tr\}
\end{equation}
and
\begin{equation}\label{5.3}
W_{r,t}:=\{x\in V_{4r}:\ \dist(x,\partial V_{4r}\cap B(x_0,3r))\le(t+\dz)r\}.
\end{equation}
\begin{itemize}
\item[\rm(i)] Then, for any $t\in(0,1)$, $\boz_{tr}\cap B(x_0,r)\subset W_{r,t}$.
Moreover, there exists a positive constant $C$, depending only on $n$
and $\sz$, such that $|W_{r,t}|\le C(t+\dz)r^n$.
\item[\rm(ii)] Let $u\in W^{1,2}(B(x_0,4r))$ satisfy $u=0$ on
$B(x_0,4r)\backslash V_{4r}$. Then, for any $t\in(0,1-\dz)$,
there exists a positive constant $C$, depending only on $n$ and $\sz$,
such that
$$\int_{B(x_0,r)\cap\boz_{tr}}u^2\,dx\le
C(t+\dz)^2r^2\int_{W_{r,t}}|\nabla u|^2\,dx.
$$
\end{itemize}
\end{lemma}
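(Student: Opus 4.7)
The plan is to prove (i) by direct geometric analysis using the Hausdorff bound from the quasi-convex condition, and to prove (ii) by a Poincar\'e-type inequality along line segments to $\partial V_{4r}$. The main geometric input throughout is the estimate $d_H(\partial(B_\boz(x_0,4r)),\partial V_{4r})\ls\dz r$, together with the convexity of $V_{4r}$ and the interior corkscrew parameter $\sz$.

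For the inclusion in (i), I would take $x\in\boz_{tr}\cap B(x_0,r)$, so there exists $y\in\partial\boz$ with $|x-y|<tr$. The quasi-convex property supplies $z\in\partial V_{4r}$ with $|y-z|\ls\dz r$ (absorbing a harmless scale-$4$ constant into $\dz$), and combining the two bounds gives $\dist(x,\partial V_{4r})<(t+\dz)r$. For $t+\dz$ small relative to $1$, the foot of perpendicular from $x$ to $\partial V_{4r}$ lies in $B(x_0,3r)$ because $x\in B(x_0,r)$, placing $x$ in $W_{r,t}$. For the measure estimate, I would control $\mathcal{H}^{n-1}(\partial V_{4r}\cap B(x_0,3r))\ls r^{n-1}$ using the convexity of $V_{4r}$ and the fact that the interior corkscrew ball of radius $\sz r$ forces a nondegenerate aperture of $\partial V_{4r}$; the $(t+\dz)r$-tubular neighborhood of this $(n-1)$-dimensional piece then has volume $\ls(t+\dz)r^n$.

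For (ii), the vanishing of $u$ on $B(x_0,4r)\setminus V_{4r}$, combined with the fact that $\partial V_{4r}\cap B(x_0,3r)$ is locally Lipschitz (being a portion of the boundary of a convex set), implies $u=0$ in the trace sense on $\partial V_{4r}\cap B(x_0,3r)$. For each $x\in B(x_0,r)\cap\boz_{tr}$, convexity of $V_{4r}$ provides a unique nearest point $\pi(x)\in\partial V_{4r}\cap B(x_0,3r)$, with $|x-\pi(x)|\le(t+\dz)r$ by (i), and the segment joining $\pi(x)$ to $x$ stays inside $W_{r,t}$. Writing $u(x)=u(x)-u(\pi(x))=\int_0^{|x-\pi(x)|}\nabla u(\pi(x)+s\mathbf{e}_x)\cdot\mathbf{e}_x\,ds$, with $\mathbf{e}_x:=(x-\pi(x))/|x-\pi(x)|$, and applying Cauchy--Schwarz gives
\begin{equation*}
|u(x)|^2\le(t+\dz)r\int_0^{(t+\dz)r}|\nabla u(\pi(x)+s\mathbf{e}_x)|^2\,ds.
\end{equation*}
Integrating in $x$ over $B(x_0,r)\cap\boz_{tr}$ and converting to coordinates adapted to $\pi$ yields an extra factor $(t+\dz)r$ from the normal direction, producing the claimed bound $C(t+\dz)^2r^2\int_{W_{r,t}}|\nabla u|^2\,dx$.

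The main obstacle is the Fubini/change-of-variables step in (ii): the nearest-point projection $\pi$ onto the convex set $V_{4r}$ is well defined and $1$-Lipschitz, but to rewrite the sum of segment integrals as a genuine volume integral over $W_{r,t}$ with a bounded Jacobian one needs a uniform control on the multiplicity of the fibres of $\pi$, which is where the corkscrew parameter $\sz$ enters to prevent pinching of $\partial V_{4r}$. Once this change of variables is set up and combined with the measure bound in (i), both items follow; the trace hypothesis $u=0$ on $\partial V_{4r}\cap B(x_0,3r)$ is handled by the zero extension of $u$ across this Lipschitz portion of $\partial V_{4r}$.
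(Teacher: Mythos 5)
The paper does not give its own proof of this lemma; it is quoted directly from \cite[Lemmas 3.4 and 3.5]{z19}, so I can only judge your attempt on its own merits, not against an in-paper argument.

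For part (i) your plan is essentially sound, modulo constant tracking. Applying Definition \ref{d2.2}(b) at radius $4r$ yields $d_H(\partial(B_\boz(x_0,4r)),\partial V_{4r})\le 4\dz r$, so the natural threshold in $W_{r,t}$ is $(t+4\dz)r$; this factor is also latent in the statement and is harmless once $\dz$ is eventually taken small, as it is in the applications. For the measure bound you do not actually need $\sz$: convexity together with $V_{4r}\subset B(x_0,4r)$ already gives $\mathcal{H}^{n-1}(\partial V_{4r})\le\mathcal{H}^{n-1}(\partial B(x_0,4r))\ls r^{n-1}$ by monotonicity of surface area for nested convex bodies, and the co-area/tube estimate along the inner parallel surfaces of $V_{4r}$ gives $|W_{r,t}|\ls (t+\dz)r^n$.

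For part (ii) there is a genuine gap, which you flag as ``the main obstacle'' but do not close, and I do not think it can be closed along the lines you indicate. The map you call $\pi$ is not the metric projection onto the convex set $V_{4r}$ (that is the identity on interior points) but the nearest-point map onto the boundary $\partial V_{4r}$. That map is single-valued only off the medial axis, is not $1$-Lipschitz, and, more to the point for your change of variables, the normal exponential map $(z,s)\mapsto z+s\nu(z)$ has Jacobian $\prod_{i}(1-s\kappa_i(z))$ that can degenerate to $0$ as $s$ approaches the focal distance. Concretely, after Cauchy--Schwarz along the normal segments and the co-area formula you arrive at a bound of the form
\begin{equation*}
(t+\dz)^2r^2\int_{\partial V_{4r}\cap B(x_0,3r)}\int_0^{(t+\dz)r}\lf|\nabla u(z+s\nu(z))\r|^2\,ds\,d\mathcal{H}^{n-1}(z),
\end{equation*}
and converting this boundary-fibered integral back to $\int_{W_{r,t}}|\nabla u|^2\,dx$ costs a factor $\prod_i(1-s\kappa_i)^{-1}$ which is not bounded in terms of $n$ and $\sz$ alone; the corkscrew parameter controls the volume of $V_{4r}$ from below, not the curvatures of its boundary. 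The robust route is not to project radially to $\partial V_{4r}$ but to fix the hyperplane $L(x_0,4r)$ furnished by Remark \ref{r2.2}(iii), parametrize $\partial V_{4r}\cap B(x_0,3r)$ as a convex Lipschitz graph $\{y_n=\phi(y')\}$ over $L$ with $V_{4r}$ above, and integrate the fundamental theorem of calculus along the fixed vertical direction: the shear $(y',s)\mapsto(y',\phi(y')+s)$ has Jacobian exactly $1$, so Fubini introduces no spurious factor, the zero trace of $u$ on $\partial V_{4r}\cap B(x_0,3r)$ is used directly, and the two powers of $(t+\dz)r$ arise from Cauchy--Schwarz in the vertical variable and the $s$-integration over $[0,(t+\dz)r]$. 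The Lipschitz bound on $\phi$, and hence the comparability of the vertical distance with $\dist(\cdot,\partial V_{4r})$, is where $\sz$ and the convexity of $V_{4r}$ legitimately enter the constant.
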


Now, we prove Lemma \ref{l5.1} via using Lemmas \ref{l4.1}, \ref{l5.2}, and \ref{l5.3}.

\begin{proof}[Proof of Lemma \ref{l5.1}]
We borrow some ideas from \cite{z19}. Observing that the matrix $a_0$ is symmetric and elliptic, and
has constant coefficients, without loss of generality, by a change of the coordinate system,
we may assume that $a_0=I$ (the unit matrix), namely, $\Delta v=0$ in $B(x_0,2r)\cap\boz$
and $v=0$ on $B(x_0,2r)\cap\partial\boz$. If $B(x_0,2r)\subset\boz$,
from the interior gradient estimates of harmonic functions (see, for instance, \cite[Theorem 2.10]{gt01}),
it follows that \eqref{5.1} holds true for any given $p\in(2,\fz)$.

Next, we assume that $x_0\in\partial\boz$.
By the assumption that $v=0$ on $B(x_0,2r)\cap\partial\boz$, we know that
$v$ can be extended to a function $\wz{v}\in W^{1,2}(B(x_0,2r))$ by zero-extension.
Let $s\in(0,r/16)$.
Since $\boz$ is a bounded $(\dz,\,\sz,\,R)$ quasi-convex domain,
from Definition \ref{d2.2} and Remark \ref{r2.2}(ii), it follows that
the convex hull of $B_\boz(x_0,s)$, denoted by $V_s$, satisfies that
$$V_s\cap\boz=B(x_0,s)\cap\boz\quad\quad\text{and}\quad\quad
d_H(\partial V_s,\partial(B(x_0,s)\cap\boz))\le \dz s.
$$
Let $w\in W^{1,2}(V_s)$ be the weak solution of the following Dirichlet problem
\begin{equation*}
\begin{cases}
-\Delta w=0\ \ & \text{in}\ \ V_s,\\
w=\wz{v} \ \ & \text{on}\ \ \partial V_s.
\end{cases}
\end{equation*}
By the assumptions that $\wz{v}=0$ on $\partial V_s\cap B(x_0,s)$ and that $V_s$
is convex, and the well known result for the boundary gradient estimates of harmonic functions
in convex domains (see, for instance, \cite[Theorem 1.1]{bl14}), we find that
\begin{equation}\label{5.4}
\|\nabla w\|_{L^\fz(B_\boz(x_0,s))}
\ls\lf[\fint_{B_\boz(x_0,2s)}|\nabla w|^2\,dx\r]^{1/2}.
\end{equation}
We now clam that there exists a positive constant $\epsilon$ such that
\begin{equation}\label{5.5}
\lf[\fint_{B_\boz(x_0,s)}|\nabla(v-w)|^2\,dx\r]^{1/2}\le C\dz^{\epsilon}
\lf[\fint_{B_\boz(x_0,2s)}|\nabla v|^2\,dx\r]^{1/2},
\end{equation}
where $C$ is a positive constant independent of $x_0$, $s$, $\dz$, and $v$.
If \eqref{5.5} holds true, then, from \eqref{5.5}, \eqref{5.4},
and Lemma \ref{l5.2}, we deduce that there exists a $\dz_0\in(0,1)$
such that, if $\boz$ is a $(\dz,\sz,R)$ quasi-convex domain
for some $\dz\in(0,\dz_0)$, $\sz\in(0,1)$, and $R\in(0,\fz)$,
then \eqref{5.1} holds true.

Next, we prove \eqref{5.5}. For any given $t\in(0,1)$, let
$\boz_{ts}$ be as in \eqref{5.2}.
Meanwhile, take $\tz_{\dz s}\in C^\fz_{\mathrm{c}}(\rn)$
satisfying that $0\le\tz_{\dz s}\le1$,
$\tz_{\dz s}\equiv1$ on $\boz\backslash\boz_{2\dz s}$, $\tz_{\dz s}=0$
in $\boz_{\dz s}$, and $|\nabla \tz_{\dz s}|\ls(\dz s)^{-1}$.
Then
\begin{align}\label{5.6}
\int_{V_s}|\nabla(w-\wz{v})|^2\,dx&=\int_{V_s}\nabla w\cdot\nabla(w-\wz{v})\,dx
-\int_{V_s}\nabla \wz{v}\cdot\nabla(w-\wz{v})\,dx\\ \nonumber
&=-\int_{V_s}\nabla \wz{v}\cdot\nabla(w-\wz{v})\,dx
=-\int_{B_\boz(x_0,s)}\nabla v\cdot\nabla(w-v)\,dx\\ \nonumber
&=-\int_{B_\boz(x_0,s)}\nabla(\tz_{\dz s}v)\cdot\nabla(w-v)\,dx\\ \nonumber
&\quad-\int_{B_\boz(x_0,s)}\nabla\lf((1-\tz_{\dz s})v\r)\cdot\nabla(w-v)\,dx.
\end{align}
By the assumptions that $\Delta v=0$ in $B_\boz(x_0,s)$, and that $v=0$ on
$B(x_0,2s)\cap\partial\boz$, and Lemma \ref{l5.3}, we conclude that
\begin{equation*}
\lf[\int_{B_\boz(x_0,s)\cap\boz_{2\dz s}}|v|^2\,dx\r]^{1/2}\ls
\dz s\lf(\int_{W_{s,2\dz}}|\nabla v|^2\,dx\r)^{1/2}
\ls \dz s(\dz s^n)^{\epsilon}\lf[\int_{B_\boz(x_0,2s)}|\nabla v|^p\,dx\r]^{1/p},
\end{equation*}
where $W_{s,2\dz}$ is as in \eqref{5.3},
$p\in(2,\fz)$ as in Lemma \ref{l4.1}, and $\epsilon:=\frac{1}{2}-\frac{1}{p}$, which,
combined with $|\nabla \tz_{\dz s}|\ls(\dz s)^{-1}$ and the H\"older inequality, further implies that
\begin{align}\label{5.7}
&\lf|\int_{B_\boz(x_0,s)}\nabla(\tz_{\dz s}v)\cdot\nabla(w-v)\,dx\r|\\ \nonumber
&\hs\le\lf|\int_{B_\boz(x_0,s)}v\nabla\tz_{\dz s}\cdot\nabla(w-v)\,dx\r|
+\lf|\int_{B_\boz(x_0,s)}\tz_{\dz s}\nabla v\cdot\nabla(w-v)\,dx\r|\\ \nonumber
&\hs\ls\lf[(\dz s)^{-1}\|v\|_{L^2(B_\boz(x_0,s)\cap\boz_{2\dz s})}
+\|\nabla v\|_{L^2(B_\boz(x_0,s)\cap\boz_{2\dz s};\rn)}\r]
\lf\|\nabla(w-v)\r\|_{L^2(B_\boz(x_0,s);\rn)}\\ \nonumber
&\hs\ls(\dz s^n)^{\epsilon}
\|\nabla v\|_{L^{p}(B_\boz(x_0,2s);\rn)}\|\nabla(w-v)\|_{L^2(B_\boz(x_0,s);\rn)}.
\end{align}
Furthermore, similarly to \eqref{5.7}, we have
\begin{align}\label{5.8}
&\lf|\int_{B_\boz(x_0,s)}\nabla((1-\tz_{\dz s})v)\cdot\nabla(w-v)\,dx\r|\\ \nonumber
&\hs\ls(\dz s^n)^{\epsilon}
\|\nabla v\|_{L^{p}(B_\boz(x_0,2s);\rn)}\|\nabla(w-v)\|_{L^2(B_\boz(x_0,s);\rn)}.
\end{align}
Thus, from \eqref{5.6}, \eqref{5.7}, and \eqref{5.8}, it follows that
$$\|\nabla(w-\wz{v})\|_{L^2(V_s;\rn)}^2\ls(\dz s^n)^{\epsilon}
\|\nabla v\|_{L^{p}(B_\boz(x_0,2s);\rn)}\|\nabla(w-v)\|_{L^2(B_\boz(x_0,s);\rn)},
$$
which further implies that
\begin{equation}\label{5.9}
\|\nabla(w-v)\|_{L^2(B_\boz(x_0,s);\rn)}\ls(\dz s^n)^{\epsilon}
\|\nabla v\|_{L^{p}(B_\boz(x_0,2s);\rn)}.
\end{equation}
Then, by \eqref{5.9}, Lemma \ref{l4.1}, and $\epsilon:=\frac{1}{2}-\frac{1}{p}$, we find that
$$\lf[\fint_{B_\boz(x_0,s)}|\nabla(v-w)|^2\,dx\r]^{1/2}\ls\dz^{\epsilon}
\lf[\fint_{B_\boz(x_0,2s)}|\nabla v|^p\,dx\r]^{1/p}
\ls\dz^{\epsilon}
\lf[\fint_{B_\boz(x_0,4s)}|\nabla v|^2\,dx\r]^{1/2}.$$
This finishes the proof of \eqref{5.5} and hence of Lemma \ref{l5.1}.
\end{proof}

Now, we prove Theorem \ref{t1.3} via using Lemma \ref{l5.1} and Theorem \ref{t1.1}.

\begin{proof}[Proof of Theorem \ref{t1.3}]
We first show (i). Via replacing Lemma \ref{l4.3} by Lemma \ref{l5.1}, and
repeating the proof of Theorem \ref{t1.2}(i),
we can prove (i). We omit the details here.

Next, we show (ii). Let $p\in(1,\fz)$ and $\omega\in A_p(\rn)$.
Assume that $u$ is the weak solution of the Dirichlet problem \eqref{4.12}
with $\mathbf{f}\in L^p_\omega(\boz;\rn)$.

We first assume that $p\in[2,\fz)$. From $\omega\in A_p(\rn)$ and
(i) and (ii) of Lemma \ref{l3.2},
it follows that there exists a sufficiently
large $p_0\in(p,\fz)$ such that
\begin{equation}\label{5.10}
\omega\in A_{\frac p{p'_0}}(\rn)\cap RH_{(\frac{p_0}{p})'}(\rn).
\end{equation}
Let $v$ be as in \eqref{4.14}. Using Lemmas \ref{l5.1} and \ref{l3.3}, and
repeating the proof of \eqref{4.15}, we find that
there exists a positive constant $\dz_0\in(0,1)$,
depending only on $n$, $p$, and $\boz$, such that, if $\boz$ is a
$(\dz,\sz,R)$ quasi-convex domain and $A$ satisfies the $(\dz,R)$-$\mathrm{BMO}$
condition for some $\dz\in(0,\dz_0)$, $\sz\in(0,1)$, and $R\in(0,\fz)$,
or $A\in\mathrm{VMO}(\boz)$, the inequality \eqref{4.15} holds true with
$p$ replaced by $p_0$, namely,
\begin{equation}\label{5.11}
\lf[\fint_{B_\boz(x_0,r)}|\nabla v|^{p_0}\,dx\r]^{\frac1{p_0}}
\ls\lf[\fint_{B_\boz(x_0,2r)}|\nabla v|^{2}\,dx\r]^{\frac1{2}}.
\end{equation}
Then, by \eqref{5.10}, \eqref{5.11}, and Theorem \ref{t1.1}(ii),
we conclude that there exists a positive constant $\dz_0\in(0,1)$,
depending only on $n$, $p$, and $\boz$, such that, if $\boz$ is a
$(\dz,\sz,R)$ quasi-convex domain and $A$ satisfies the $(\dz,R)$-$\mathrm{BMO}$
condition for some $\dz\in(0,\dz_0)$, $\sz\in(0,1)$,
and $R\in(0,\fz)$, or $A\in\mathrm{VMO}(\boz)$, then the weighted Dirichlet problem $(D)_{p,\,\omega}$
is uniquely solvable and \eqref{1.13} holds true in this case.

Now, let $p\in(1,2)$. From this, $\omega\in A_p(\rn)$,
and Lemma \ref{l3.2}(iii), we deduce that
$p'\in(2,\fz)$ and $\omega^{1-p'}\in A_{p'}(\rn)$.
Let $\mathbf{g}\in L^{p'}_{\omega^{1-p'}}(\boz;\rn)$ and $w$ be the weak
solution of the following Dirichlet problem
\begin{equation*}
\begin{cases}
-\mathrm{div}(A^\ast\nabla w)=\mathrm{div}(\mathbf{g})\ \ &\text{in}\ \ \boz,\\
w=0 \ \ &\text{on}\ \ \partial\boz,
\end{cases}
\end{equation*}
where $A^\ast$ denotes the transpose of $A$. By the assumption that $A$ satisfies Assumption \ref{a1}, we find that
$A^\ast$ also satisfies Assumption \ref{a1}. Thus, we know that
there exists a positive constant $\dz_0\in(0,1)$,
depending only on $n$, $p$, and $\boz$, such that, if $\boz$ is a
$(\dz,\sz,R)$ quasi-convex domain and $A$ satisfies the $(\dz,R)$-$\mathrm{BMO}$
condition for some $\dz\in(0,\dz_0)$, $\sz\in(0,1)$,
and $R\in(0,\fz)$, or $A\in\mathrm{VMO}(\boz)$,  then
\begin{equation}\label{5.12}
\|\nabla w\|_{L^{p'}_{\omega^{1-p'}}(\boz;\rn)}\ls\|\mathbf{g}\|_{L^{p'}_{\omega^{1-p'}}(\boz;\rn)}.
\end{equation}
Using \eqref{5.12} and repeating the proof of \eqref{4.17},
we conclude that, when $p\in(1,2)$ and $\omega\in A_p(\rn)$,
the weighted Dirichlet problem $(D)_{p,\,\omega}$
is uniquely solvable and \eqref{1.13} holds true.
This finishes the proof of (ii) and hence of Theorem \ref{t1.3}.
\end{proof}

To prove Corollary \ref{c1.1} by using Theorem \ref{t1.3}, we need the following
Lemma \ref{l5.4}.

\begin{lemma}\label{l5.4}
Let $n\ge2$ and $\boz\subset\rn$ be a bounded semi-convex domain.
Then there exists a $\dz_0\in(0,1)$ such that $\boz$
is a $(\dz,\sz,R)$ quasi-convex domain for any $\dz\in(0,\dz_0)$,
some $\sz\in(0,1)$, and some $R\in(0,\fz)$.
\end{lemma}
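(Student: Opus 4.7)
The plan is to invoke the equivalent characterization of semi-convex domains recalled in Remark \ref{r2.3}(i)---namely that a bounded semi-convex domain $\boz$ is exactly a Lipschitz domain satisfying a UEBC with some uniform ball constant $r_\ast\in(0,\fz)$---and then to verify separately the two sufficient conditions (a) and (c) of Remark \ref{r2.2}(iii), which together deliver quasi-convexity with an explicitly controllable parameter.

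First, since $\boz$ is a bounded Lipschitz domain, it is in particular an NTA domain, so the interior corkscrew condition supplies constants $\sz_\ast\in(0,1)$ and $R_\ast\in(0,\fz)$ such that, for every $x\in\partial\boz$ and $r\in(0,R_\ast]$, there exists $x_0\in\boz$ with $B(x_0,\sz_\ast r)\subset\boz\cap B(x,r)$; this is condition (a) of Definition \ref{d2.2} with $\sz:=\sz_\ast$, uniformly in $x$ and $r$.

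Next, I verify the half-space condition (c) from Remark \ref{r2.2}(iii) using the UEBC. Fix $x\in\partial\boz$ and $r\in(0,R_\ast]$. By UEBC there exists $\mathbf{v}\in S^{n-1}$ with $B(x+r_\ast\mathbf{v},r_\ast)\subset\rn\setminus\boz$. Let $L(x,r)$ be the hyperplane through $x$ orthogonal to $\mathbf{v}$ and set $\boldsymbol{\nu}_{x,r}:=-\mathbf{v}$. For any $z\in\boz\cap B(x,r)$, writing $z=x+w$ with $|w|\le r$, the exclusion $z\notin B(x+r_\ast\mathbf{v},r_\ast)$ reads $|w-r_\ast\mathbf{v}|^2\ge r_\ast^2$, which expands to $w\cdot\mathbf{v}\le|w|^2/(2r_\ast)\le r^2/(2r_\ast)$. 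Rewriting in terms of $\boldsymbol{\nu}_{x,r}$ yields $(z-x)\cdot\boldsymbol{\nu}_{x,r}\ge-[r/(2r_\ast)]\,r$, which is precisely condition (c) at scale $r$ with parameter $\dz_\sharp(r):=r/(2r_\ast)$.

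Finally, given any target $\dz\in(0,\dz_0)$ with $\dz_0:=1$, set $\sz:=\sz_\ast$ and $R:=\min\{R_\ast,\,\dz\sz_\ast r_\ast/4\}$. Then conditions (a) and (c) hold simultaneously for every $r\in(0,R]$ with $\dz_\sharp(r)\le\dz\sz_\ast/8$, so Remark \ref{r2.2}(iii) upgrades this to a $(8\dz_\sharp(R)/\sz_\ast,\sz_\ast,R)$ quasi-convex structure, and $8\dz_\sharp(R)/\sz_\ast\le\dz$; therefore $\boz$ is a $(\dz,\sz_\ast,R)$ quasi-convex domain. The only delicate step is the quadratic estimate $w\cdot\mathbf{v}\le|w|^2/(2r_\ast)$ coming from the excluded exterior ball of fixed radius $r_\ast$: it is exactly what produces a parameter that is small (of order $r/r_\ast$) at small scales and, after the conversion factor $8/\sz_\ast$ from Remark \ref{r2.2}(iii), forces $R$ to shrink linearly in $\dz$.
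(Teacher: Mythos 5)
Your proof is correct and follows essentially the same route as the paper's: you invoke the characterization of semi-convex domains as Lipschitz plus UEBC, use Lipschitz $\Rightarrow$ NTA to get the corkscrew condition (a), derive the half-space condition (c) from the exterior ball, and then invoke Remark~\ref{r2.2}(iii). The one added value of your write-up is that you make explicit the quadratic estimate $w\cdot\mathbf{v}\le |w|^2/(2r_\ast)$ giving $\dz_\sharp(r)=r/(2r_\ast)$, where the paper only says ``by a simple geometric observation,'' and you carry the bookkeeping of the conversion factor $8/\sz$ from Remark~\ref{r2.2}(iii) through to the choice of $R$; the paper absorbs this into the unspecified $\dz_0$.
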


\begin{proof}
Assume that $x\in\partial\boz$ and $\boz$ has the UEBC constant $R_0\in(0,\fz)$.
Then there exists a $\mathbf{v}_x\in S^{n-1}$, depending on $x$, such that
$B(x+R_0\mathbf{v}_x,R_0)\subset\boz^\complement$,
which further implies that, for any $r\in(0,R_0)$,
\begin{equation}\label{5.13}
B(x+r\mathbf{v}_x,r)\subset B(x+R_0\mathbf{v}_x,R_0)\subset\boz^\complement.
\end{equation}
Denote by $L_x$ the $(n-1)$-dimensional plane such that
$L_x$ contains $x$ and has a unit normal direction $\mathbf{v}_x$.
For any given $\dz\in(0,1)$ and any $r\in(0,2R_0\dz)$,
let
$$
H(x,\dz,r):=\{y+t(-\mathbf{v}_x):\ y\in L_x,\ t\in[-\dz r,\fz)\}.
$$
Then, by \eqref{5.13} and a simple geometric observation, we conclude that,
for any given $\dz\in(0,1)$ and any $r\in(0,2R_0\dz)$,
\begin{equation}\label{5.14}
\boz\cap B(x,r)\subset H(x,\dz,r)\cap B(x,r).
\end{equation}
Moreover, from Remark \ref{r2.3}(i), it follows that
$\boz$ is a Lipschitz domain, which, together with the fact that
Lipschitz domains are NTA domains, implies that there exist
a $\sz\in(0,1)$ and an $R_1\in(0,\fz)$ such that,
for any $x\in\partial\boz$ and $r\in(0,R_1)$,
there exists an $x_0\in\boz$, depending on $x$,
such that $B(x_0,\sz r)\subset\boz\cap B(x,r)$.
By this, \eqref{5.14}, and  Remark \ref{r2.2}(iii), we conclude that
there exists a $\dz_0\in(0,1)$ such that
$\boz$ is a $(\dz,\sz,R)$ quasi-convex domain
for any $\dz\in(0,\dz_0)$,  some $\sz\in(0,1)$,
and some $R\in(0,\fz)$.
This finishes the proof of Lemma \ref{l5.4}.
\end{proof}

Next, we show Corollary \ref{c1.1} via using Theorem \ref{t1.3}(ii)
and Lemma \ref{l5.4}.

\begin{proof}[Proof of Corollary \ref{c1.1}]
By Remark \ref{r2.3}(iii), we know that, for any given bounded $C^1$ domain
$\boz$,  $\boz$ is a $(\dz,\sz,R)$
quasi-convex domain for any $\dz\in(0,\dz_0)$, some $\sz\in(0,1)$, and
some $R\in(0,\fz)$, where $\dz_0\in(0,1)$ is a constant depending on $\boz$.
Furthermore, from Lemma \ref{l5.4}, it follows that,
for any bounded semi-convex domain $\boz$, $\boz$ is a $(\dz,\sz,R)$ quasi-convex domain
for any $\dz\in(0,\dz_0)$, some $\sz\in(0,1)$, and some $R\in(0,\fz)$,
where $\dz_0\in(0,1)$ is a constant depending on $\boz$.
Thus, for any given bounded $C^1$ domain or semi-convex domain $\boz$,
$\boz$ is a $(\dz,\sz,R)$ quasi-convex domain
for any $\dz\in(0,\dz_0)$, some $\sz\in(0,1)$, and some $R\in(0,\fz)$,
where $\dz_0\in(0,1)$ is a constant depending on $\boz$. By this
and Theorem \ref{t1.3}(ii), we conclude that
the conclusion of this corollary holds true,
which completes the proof of Corollary \ref{c1.1}.
\end{proof}

\section{Several applications of Theorems \ref{t1.2} and \ref{t1.3}}\label{s6}

In this section, we give several applications of the weighted global
estimates obtained in Theorems \ref{t1.2} and \ref{t1.3}.
More precisely, using Theorems \ref{t1.2}(ii) and \ref{t1.3}(ii),
we obtain the global gradient estimates, respectively, in (weighted) Lorentz spaces,
(Lorentz--)Morrey spaces, (Musielak--)Orlicz spaces (also called generalized Orlicz spaces),
and variable Lebesgue spaces.
We begin with recalling the following notion of the weighted Lorentz space
$L^{q,r}_\omega(\boz)$ on the domain $\boz$.

\begin{definition}\label{d6.1}
Let $n\ge2$ and $\boz$ be a bounded NTA domain in $\rn$.
Assume that $q\in[1,\fz)$, $r\in(0,\fz]$, and $\omega\in A_p(\rn)$ with some $p\in[1,\fz)$.
The \emph{weighted Lorentz space} $L^{q,r}_\omega(\boz)$ is defined by setting
$$L^{q,r}_\omega(\boz):=\lf\{f\ \text{is measurable on}\ \boz:\
\|f\|_{L^{q,r}_\omega(\boz)}<\fz\r\},
$$
where, when $r\in(0,\fz)$,
$$\|f\|_{L^{q,r}_\omega(\boz)}:=\lf\{q\int_0^\fz
\lf[t^q\omega\lf(\lf\{x\in\boz:\ |f(x)|>t\r\}\r)\r]^{r/q}\frac{dt}{t}\r\}^{1/r},
$$
and
$$\|f\|_{L^{q,\fz}_\omega(\boz)}:=\sup_{t\in(0,\fz)}t[\omega\lf(\{x\in\boz:\ |f(x)|>t\}\r)]^{1/q}.
$$

Moreover, the \emph{space} $L^{q,r}_\omega(\boz;\rn)$ is defined via replacing
$L^p_\omega(\boz)$ in \eqref{1.1} by the above $L^{q,r}_\omega(\boz)$ in the
definition of $L^p_\omega(\boz;\rn)$ in \eqref{1.2}.
\end{definition}

It is easy to see that, when $q,\,r\in[1,\fz)$ and $q=r$, $L^{q,r}_\omega(\boz)=L^q_\omega(\boz)$
and $L^{q,r}_\omega(\boz;\rn)=L^q_\omega(\boz;\rn)$.

As applications of both Theorems \ref{t1.2}(ii) and \ref{t1.3}(ii) and the interpolation theorem of operators in the scale
of (weighted) Lorentz spaces, we have the following global gradient estimates
for the Dirichlet problem $(D)_p$ in (weighted) Lorentz spaces.

\begin{theorem}\label{t6.1}
Let $n\ge2$, $\boz\subset\rn$ be a bounded Lipschitz domain, and the matrix $A$ satisfy Assumption \ref{a1}.
Assume that $r\in(0,\fz]$, $\uc_0$ is as in Theorem \ref{t1.2}(i), $p_0:=3+\uc_0$ when $n\ge3$,
or $p_0:=4+\uc_0$ when $n=2$. Then, for any given $p\in(p_0',p_0)$ and any
$\omega\in A_{\frac{p}{p_0'}}(\rn)\cap RH_{(\frac{p_0}{p})'}(\rn)$,
there exists a positive constant $\dz_0\in(0,\fz)$,
depending only on $n$, $p$, the Lipschitz constant of $\boz$, $[\omega]_{A_{\frac{p}{p_0'}}(\rn)}$,
and $[\omega]_{RH_{(\frac{p_0}{p})'}(\rn)}$, such that, if
$A$ satisfies the $(\dz,R)$-$\mathrm{BMO}$ condition for some $\dz\in(0,\dz_0)$
and $R\in(0,\fz)$, or $A\in\mathrm{VMO}(\boz)$,
then, for any weak solution $u\in W^{1,2}_0(\boz)$ of the problem $(D)_2$ with
$\mathbf{f}\in L^{p,r}_\omega(\boz;\rn)$, $\nabla u\in L^{p,r}_\omega(\boz;\rn)$
and
\begin{equation}\label{6.1}
\|\nabla u\|_{L^{p,r}_\omega(\boz;\rn)}\le C\|\mathbf{f}\|_{L^{p,r}_\omega(\boz;\rn)},
\end{equation}
where $C$ is a positive constant depending only on $n$, $p$, $r$,
$[\omega]_{A_{\frac{p}{p_0'}}(\rn)}$, $[\omega]_{RH_{(\frac{p_0}{p})'}(\rn)}$,
and the Lipschitz constant of $\boz$.
\end{theorem}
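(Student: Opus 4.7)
The plan is to view the solution map $T\colon \mathbf{f}\mapsto \nabla u$, where $u\in W^{1,2}_0(\boz)$ is the weak solution of $(D)_2$ with right-hand side $\divz(\mathbf{f})$, as a bounded linear operator between weighted Lebesgue spaces at two exponents straddling $p$, and then to deduce \eqref{6.1} by real interpolation of the weighted endpoint estimates furnished by Theorem \ref{t1.2}(ii).

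First I would invoke the self-improvement properties of Muckenhoupt and reverse H\"older classes recorded in Lemma \ref{l3.2}(i) and (iv), applied to the assumption $\omega\in A_{p/p_0'}(\rn)\cap RH_{(p_0/p)'}(\rn)$, to produce exponents $p_1,p_2$ with
\begin{equation*}
p_0'<p_1<p<p_2<p_0
\end{equation*}
such that
\begin{equation*}
\omega\in A_{p_j/p_0'}(\rn)\cap RH_{(p_0/p_j)'}(\rn),\qquad j\in\{1,2\}.
\end{equation*}
The bookkeeping is as follows: enlarging $p$ to $p_2$ weakens the $A$-condition (since $A_{p/p_0'}(\rn)\subset A_{p_2/p_0'}(\rn)$) and strengthens the reverse-H\"older exponent, which is absorbed by Lemma \ref{l3.2}(iv); shrinking $p$ to $p_1$ weakens the reverse-H\"older condition and strengthens the $A$-exponent, which is absorbed by Lemma \ref{l3.2}(i). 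Because the associated constants $[\omega]_{A_{p_j/p_0'}(\rn)}$ and $[\omega]_{RH_{(p_0/p_j)'}(\rn)}$ can be controlled uniformly in terms of $[\omega]_{A_{p/p_0'}(\rn)}$ and $[\omega]_{RH_{(p_0/p)'}(\rn)}$ for $p_1,p_2$ close enough to $p$, a single smallness threshold $\dz_0>0$ (depending only on the data permitted in the statement) suffices for Theorem \ref{t1.2}(ii) to apply simultaneously at $p_1$ and $p_2$.

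Theorem \ref{t1.2}(ii) at the two endpoints then yields
\begin{equation*}
\|T\mathbf{f}\|_{L^{p_j}_\omega(\boz;\rn)}\ls \|\mathbf{f}\|_{L^{p_j}_\omega(\boz;\rn)},\qquad j\in\{1,2\}.
\end{equation*}
Since $T$ is linear (by linearity of $(D)_2$ together with the uniqueness of $W^{1,2}_0$-solutions recorded in Remark \ref{r1.2}), I would invoke the standard real-interpolation identification
\begin{equation*}
\lf(L^{p_1}_\omega(\boz;\rn),\,L^{p_2}_\omega(\boz;\rn)\r)_{\tz,\,r}=L^{p,r}_\omega(\boz;\rn),\qquad \frac{1}{p}=\frac{1-\tz}{p_1}+\frac{\tz}{p_2},
\end{equation*}
valid for some $\tz\in(0,1)$ and every $r\in(0,\fz]$, to conclude \eqref{6.1} with the dependencies indicated in the statement.

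The principal obstacle is the uniform control of the weight characteristics (and hence of the threshold $\dz_0$) under the perturbation of $p$ to $p_1$ and $p_2$; this is dispatched by the openness of the $A_q(\rn)$ and $RH_s(\rn)$ classes in Lemma \ref{l3.2}. Beyond this bookkeeping, the argument reduces to a direct application of real interpolation of a linear operator, and no appeal to the Rubio de Francia extrapolation theorem is required for this statement.
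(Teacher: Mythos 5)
Your proposal is correct and follows essentially the same route as the paper: both exploit the openness of the $A_q$ and $RH_s$ classes (Lemma \ref{l3.2}(i) and (iv)) to obtain weighted $W^{1,p_j}_\omega$ bounds from Theorem \ref{t1.2}(ii) at two exponents $p_1<p<p_2$ inside $(p_0',p_0)$, then apply real interpolation for the linear solution operator $T\colon\mathbf f\mapsto\nabla u_{\mathbf f}$ to pass to the Lorentz scale. The paper cites \cite[Theorem 1.4.19]{g14} for the interpolation step, but the content is the same as your real-interpolation identification $(L^{p_1}_\omega,L^{p_2}_\omega)_{\tz,r}=L^{p,r}_\omega$, so there is no material difference.
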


\begin{proof}
Let $p_0$ be as in Theorem \ref{t6.1}, $p\in(p_0',p_0)$, $r\in(0,\fz]$,
and $\omega\in A_{\frac{p}{p_0'}}(\rn)\cap RH_{(\frac{p_0}{p})'}(\rn)$.
By (i) and (iv) of Lemma \ref{l3.1}, we know that there exists an
$\epsilon\in(0,\min\{p-p_0',p_0-p\})$ such that
$\omega\in A_{\frac{p+\epsilon}{p_0'}}(\rn)\cap RH_{(\frac{p_0}{p+\epsilon})'}(\rn)$
and $\omega\in A_{\frac{p-\epsilon}{p_0'}}(\rn)
\cap RH_{(\frac{p_0}{p-\epsilon})'}(\rn)$.

For any $\mathbf{f}\in L^2(\boz;\rn)$, let
$T:\ \mathbf{f}\mapsto\nabla u_{\mathbf{f}}$,
where $u_{\mathbf{f}}$ is the weak solution of the following Dirichlet problem
\begin{equation}\label{6.2}
\begin{cases}
-\mathrm{div}(A\nabla u_{\mathbf{f}})=\mathrm{div}(\mathbf{f})\ \ &\text{in}\ \ \boz,\\
u_{\mathbf{f}}=0 \ \ &\text{on}\ \ \partial\boz.
\end{cases}
\end{equation}
From \eqref{1.12}, it follows that $T$ is a well-defined linear operator on
both the spaces $L^{p+\epsilon}_\omega(\boz;\rn)$ and $L^{p-\epsilon}_\omega(\boz;\rn)$.
Let $\tz_0:=\frac{p+\epsilon}{2p}$. Then $\tz_0\in(0,1)$ and
$\frac{1}{p}=\frac{1-\tz_0}{p-\epsilon}+\frac{\tz_0}{p+\epsilon}$,
which, together with \eqref{1.12} and the interpolation theorem of operators
on Lorentz spaces (see, for instance, \cite[Theorem 1.4.19]{g14}), further implies that,
for any $\mathbf{f}\in L^{p,r}_\omega(\boz;\rn)$,
\begin{equation*}
\lf\|\nabla u_{\mathbf{f}}\r\|_{L^{p,r}_\omega(\boz;\rn)}=\lf\|T(\mathbf{f})\r\|_{L^{p,r}_\omega(\boz;\rn)}
\ls\lf\|\mathbf{f}\r\|_{L^{p,r}_\omega(\boz;\rn)},
\end{equation*}
where $u_{\mathbf{f}}$ is as in \eqref{6.2}. This finishes the proof of \eqref{6.1} and hence of Theorem \ref{t6.1}.
\end{proof}

\begin{theorem}\label{t6.2}
Let $n\ge2$, $\boz\subset\rn$ be a bounded $\mathrm{NTA}$ domain, $p\in(1,\fz)$, $r\in(0,\fz]$,
and $\omega\in A_p(\rn)$. Assume that the matrix $A$ satisfies Assumption \ref{a1}
and $\boz$ is a $(\dz,\sz,R)$ quasi-convex domain
with some $\dz,\ \sz\in(0,1)$ and $R\in(0,\fz)$.
Then there exists a positive constant $\dz_0\in(0,1)$,
depending only on $n$, $p$, $\boz$, and $[\omega]_{A_p(\rn)}$, such that, if
$\boz$ is a $(\dz,\sz,R)$ quasi-convex domain and $A$ satisfies the $(\dz,R)$-$\mathrm{BMO}$
condition for some $\dz\in(0,\dz_0)$, $\sz\in(0,1)$,
and $R\in(0,\fz)$, or $A\in\mathrm{VMO}(\boz)$, then, for any weak solution $u\in W^{1,2}_0(\boz)$ of the problem
$(D)_2$ with $\mathbf{f}\in L^{p,r}_\omega(\boz;\rn)$, $\nabla u\in L^{p,r}_\omega(\boz;\rn)$ and
\begin{equation}\label{6.3}
\|\nabla u\|_{L^{p,r}_\omega(\boz;\rn)}\le C\|\mathbf{f}\|_{L^{p,r}_\omega(\boz;\rn)},
\end{equation}
where $C$ is a positive constant depending only on $n$, $p$, $r$,
$[\omega]_{A_{p}(\rn)}$, and $\boz$.
\end{theorem}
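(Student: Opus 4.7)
The plan is to mimic the proof of Theorem \ref{t6.1}, replacing the Lipschitz-domain input Theorem \ref{t1.2}(ii) by the quasi-convex-domain input Theorem \ref{t1.3}(ii), and then interpolating in the weighted Lorentz scale. The new feature compared with Theorem \ref{t6.1} is that the ``admissible'' weight class is simply $A_p(\rn)$ (with no reverse H\"older condition), which actually simplifies the self-improvement step.

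First I would open up a small window around the target exponent $p$. Since $\omega\in A_p(\rn)$, Lemma \ref{l3.2}(i) provides an $\epsilon\in(0,p-1)$, depending only on $n$, $p$, and $[\omega]_{A_p(\rn)}$, such that $\omega\in A_{p-\epsilon}(\rn)$; trivially $\omega\in A_{p+\epsilon}(\rn)$ as well, with $A_p$-constants bounded in terms of $[\omega]_{A_p(\rn)}$. Applying Theorem \ref{t1.3}(ii) twice, with $p$ replaced by $p-\epsilon$ and $p+\epsilon$ respectively, yields two constants $\dz_0^{\pm}\in(0,1)$, each depending only on $n$, $p$, $\epsilon$, $\boz$, and $[\omega]_{A_{p\pm\epsilon}(\rn)}$. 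Set $\dz_0:=\min\{\dz_0^+,\dz_0^-\}$; then, under the hypothesis of the theorem, the weighted $W^{1,q}_{0,\omega}$-estimate \eqref{1.13} holds both for $q=p-\epsilon$ and for $q=p+\epsilon$.

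Next I would set up an interpolation. For any $\mathbf{f}\in L^{p+\epsilon}_\omega(\boz;\rn)\cup L^{p-\epsilon}_\omega(\boz;\rn)$, define
$$T\mathbf{f}:=\nabla u_{\mathbf{f}},$$
where $u_{\mathbf{f}}$ is the unique weak solution in $W^{1,2}_0(\boz)$ of the Dirichlet problem $(D)_2$ with datum $\mathbf{f}$ (this is well defined by Remark \ref{r1.2} together with the containment $L^{p\pm\epsilon}_\omega(\boz;\rn)\subset L^2(\boz;\rn)$, which follows from Lemma \ref{l3.2}(vii) and $|\boz|<\fz$; after a standard density argument the definition extends by continuity). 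The two estimates supplied by Theorem \ref{t1.3}(ii) say precisely that $T$ is bounded from $L^{p-\epsilon}_\omega(\boz;\rn)$ and from $L^{p+\epsilon}_\omega(\boz;\rn)$ to themselves. Choosing $\tz_0\in(0,1)$ with
$$\frac{1}{p}=\frac{1-\tz_0}{p-\epsilon}+\frac{\tz_0}{p+\epsilon},$$
the real interpolation theorem on weighted Lorentz spaces (see, for instance, \cite[Theorem 1.4.19]{g14}, whose proof is insensitive to the underlying doubling measure $\omega\,dx$) gives the bound
$$\|T\mathbf{f}\|_{L^{p,r}_\omega(\boz;\rn)}\ls\|\mathbf{f}\|_{L^{p,r}_\omega(\boz;\rn)}$$
for every $r\in(0,\fz]$, which is \eqref{6.3}.

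I do not anticipate a serious obstacle. The only mild subtlety will be verifying that the constant $\dz_0$ obtained at the end can indeed be made to depend only on the quantities listed in the statement, and not on $\epsilon$; but this is automatic, since $\epsilon$ itself depends only on $n$, $p$, and $[\omega]_{A_p(\rn)}$ through Lemma \ref{l3.2}(i). The extension of $T$ by density from $L^2$-data to the full weighted Lorentz class, together with a routine check that the resulting extended solution still solves $(D)_2$ in the weak sense of \eqref{1.5}, is standard and can be handled as in the corresponding step for Theorem \ref{t1.3}(ii).
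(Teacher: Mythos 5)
Your proposal is correct and follows exactly the route the paper itself intends for Theorem \ref{t6.2}: the paper omits the proof with the remark that it is ``similar to that of Theorem \ref{t6.1},'' and your argument is precisely that adaptation — replacing the input Theorem \ref{t1.2}(ii) by Theorem \ref{t1.3}(ii), noting the simpler openness needed for a bare $A_p$ hypothesis, and then applying real interpolation in the weighted Lorentz scale. One small inaccuracy worth noting: the parenthetical claim that $L^{p\pm\epsilon}_\omega(\boz;\rn)\subset L^2(\boz;\rn)$ via Lemma \ref{l3.2}(vii) need not hold when $p$ is close to $1$, since that lemma only yields $L^q_\omega(\boz)\subset L^{q/q_1}(\boz)$ with $q/q_1$ possibly below $2$; but this does not affect the structure of the argument, as the operator $T$ can be consistently defined on, say, compactly supported bounded data, where the $W^{1,2}_0$ solution is available, and then extended by density using the two endpoint estimates, exactly as you already suggest in your closing remark.
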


The proof of Theorem \ref{t6.2} is similar to that of Theorem \ref{t6.1}. We omit the details here.

Next, we recall the definition of the (Lorentz--)Morrey space on the domain $\boz$ as follows.

\begin{definition}\label{d6.2}
Assume that $n\ge2$ and $\boz$ is a bounded NTA domain in $\rn$.
Let $p\in(1,\fz)$, $r\in(0,\fz]$, and $\tz\in[0,n]$.
The \emph{Lorentz--Morrey space} $L^{p,r;\tz}(\boz)$ is defined by setting
$$L^{p,r;\tz}(\boz):=\lf\{f\ \text{is measurable on}\ \boz:\
\|f\|_{L^{p,r;\tz}(\boz)}<\fz\r\},
$$
where
$$\|f\|_{L^{p,r;\tz}(\boz)}:=\sup_{s\in(0,\diam(\boz)]}\sup_{x\in\boz}
\lf\{s^{\frac{\tz-n}{p}}\|f\|_{L^{p,r}(B(x,s)\cap\boz)}\r\}.
$$

Moreover, the \emph{space} $L^{p,r;\tz}(\boz;\rn)$ is defined via replacing
$L^p_\omega(\boz)$ in \eqref{1.1} by the above $L^{p,r;\tz}(\boz)$
in the definition of $L^p_\omega(\boz;\rn)$ in \eqref{1.2}.
\end{definition}

It is worth pointing out that, when $\tz=n$, the Lorentz--Morrey space $L^{p,r;\tz}(\boz)$
is just the Lorentz space; in this case, we denote the spaces $L^{p,r;\tz}(\boz)$
and $L^{p,r;\tz}(\boz;\rn)$ simply, respectively, by $L^{p,r}(\boz)$
and $L^{p,r}(\boz;\rn)$. Moreover, when $p=r$, the space $L^{p,r;\tz}(\boz)$
is just the \emph{Morrey space}; in this case, we denote the spaces $L^{p,r;\tz}(\boz)$
and $L^{p,r;\tz}(\boz;\rn)$ simply by $\cm^{\tz}_p(\boz)$
and $\cm^{\tz}_p(\boz;\rn)$, respectively.

Applying Theorems \ref{t6.1} and \ref{t6.2}, we further
obtain the global gradient estimates for the Dirichlet problem
$(D)_p$ in Lorentz--Morrey spaces as follows.

\begin{theorem}\label{t6.3}
Let $A$ and $\boz$ be as in Theorem \ref{t6.1}, $\uc_0$ as in Theorem \ref{t1.2}(i),
$p_0:=3+\uc_0$ when $n\ge3$, or $p_0:=4+\uc_0$ when $n=2$, $p\in(p_0',p_0)$, $r\in(0,\fz]$,
and $\tz\in(pn/p_0,n]$. Then there exists a positive constant $\dz_0\in(0,\fz)$,
depending on $n$, $p$, $r$, $\tz$, and $\boz$, such that, if
$A$ satisfies the $(\dz,R)$-$\mathrm{BMO}$
condition for some $\dz\in(0,\dz_0)$ and $R\in(0,\fz)$, or $A\in\mathrm{VMO}(\boz)$,
then there exists a positive constant $C$, depending only on $n$, $p$, $r$, $\tz$, and
the Lipschitz constant of $\boz$, such that,
for any weak solution $u\in W^{1,2}_0(\boz)$ of the problem $(D)_2$
with $\mathbf{f}\in L^{p,r;\tz}(\boz;\rn)$,
$\nabla u\in L^{p,r;\tz}(\boz;\rn)$ and
\begin{equation}\label{6.4}
\|\nabla u\|_{L^{p,r;\tz}(\boz;\rn)}\le C\|\mathbf{f}\|_{L^{p,r;\tz}(\boz;\rn)}.
\end{equation}
\end{theorem}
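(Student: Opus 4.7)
The plan is to localize the Lorentz--Morrey norm on each ball $B:=B(x_0,s)$ with $x_0\in\boz$ and $s\in(0,\diam(\boz)]$, and then reduce the desired bound to the weighted Lorentz estimate from Theorem~\ref{t6.1} applied with the trivial weight $\omega\equiv1$. By the definition of the Lorentz--Morrey norm, the task is to prove
$$
s^{(\tz-n)/p}\,\|\nabla u\|_{L^{p,r}(B\cap\boz;\rn)}\le C\,\|\mathbf{f}\|_{L^{p,r;\tz}(\boz;\rn)}
$$
with $C$ independent of $(x_0,s)$. Split $\mathbf{f}=\mathbf{f}_1+\mathbf{f}_2$ with $\mathbf{f}_1:=\mathbf{f}\mathbf{1}_{2B\cap\boz}$, and let $u_i\in W^{1,2}_0(\boz)$ be the weak solution of $(D)_2$ with datum $\mathbf{f}_i$, so that $u=u_1+u_2$.

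For the local part $u_1$, since $\omega\equiv1$ trivially lies in $A_{p/p_0'}(\rn)\cap RH_{(p_0/p)'}(\rn)$, Theorem~\ref{t6.1} gives
$$
\|\nabla u_1\|_{L^{p,r}(B\cap\boz;\rn)}\le\|\nabla u_1\|_{L^{p,r}(\boz;\rn)}\ls \|\mathbf{f}\|_{L^{p,r}(2B\cap\boz;\rn)}\ls (2s)^{(n-\tz)/p}\,\|\mathbf{f}\|_{L^{p,r;\tz}(\boz;\rn)},
$$
where the last step is the definition of the Lorentz--Morrey norm. Multiplication by $s^{(\tz-n)/p}$ closes this contribution.

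For the non-local part $u_2$, the key observation is that $\mathbf{f}_2\equiv 0$ on $2B\cap\boz$, so $u_2$ solves $\mathrm{div}(A\nabla u_2)=0$ in $2B\cap\boz$ with $u_2=0$ on $2B\cap\paz\boz$. The plan is to invoke the weak reverse H\"older inequality established in the proof of Theorem~\ref{t1.2}(i) for some exponent $p_1\in(p,p_0)$, together with Lemma~\ref{l3.3}, to get
$$
\lf[\fint_{B\cap\boz}|\nabla u_2|^{p_1}\,dx\r]^{1/p_1}\ls\lf[\fint_{2B\cap\boz}|\nabla u_2|^{2}\,dx\r]^{1/2},
$$
and then to upgrade the left-hand side to an $L^{p,r}$--norm via the elementary embedding $L^{p_1}(B\cap\boz;\rn)\hookrightarrow L^{p,r}(B\cap\boz;\rn)$, valid since $p_1>p$ and $B\cap\boz$ is bounded. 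The right-hand side in turn is controlled by the $L^{2}$--energy estimate of Remark~\ref{r1.2} applied to $(D)_2$ with datum $\mathbf{f}_2$, combined with a dyadic annular decomposition $\boz\setminus 2B=\bigcup_{k\ge 1}A_k$ with $A_k:=(2^{k+1}B\setminus 2^kB)\cap\boz$: on each $A_k$, H\"older's inequality and the Morrey condition on $\mathbf{f}$ produce a bound of the form $c_k\,s^{(n-\tz)/p}\|\mathbf{f}\|_{L^{p,r;\tz}(\boz;\rn)}$ with $c_k=C\,2^{-k\delta}$ for some $\delta>0$.

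The main obstacle is the exponent bookkeeping in this last step, and it is exactly here that the restriction $\tz\in(pn/p_0,n]$ enters: choosing $p_1$ sufficiently close to $p_0$, one verifies that the exponent $\delta$ obtained from the interplay between the scaling $s\mapsto 2^ks$ in the annuli and the Morrey scaling $s^{(n-\tz)/p}$ of $\mathbf{f}$ is strictly positive precisely when $\tz>pn/p_0$, so that the geometric series $\sum_{k\ge 1}2^{-k\delta}$ converges to a finite constant. Putting the local and non-local bounds together yields \eqref{6.4}. All other ingredients, namely the weighted Lorentz estimate of Theorem~\ref{t6.1}, the weak reverse H\"older inequality available under the $(\dz,R)$--BMO or VMO assumption on $A$, and the $L^2$--solvability of $(D)_2$, are already in place from earlier sections, so no further analytic input is required.
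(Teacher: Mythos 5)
The local part of your decomposition is fine, and the overall localize-and-split strategy is a legitimate alternative to what the paper does. But the non-local part has a genuine gap, and the place you flag as "the main obstacle" is in fact exactly where the argument breaks down.

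Concretely, you claim that $\bigl(\fint_{2B_\boz}|\nabla u_2|^2\,dx\bigr)^{1/2}$ can be bounded by $C\,s^{(n-\tz)/p}\|\mathbf{f}\|_{L^{p,r;\tz}(\boz;\rn)}$ using only the $L^2$ energy estimate from Remark~\ref{r1.2} and the dyadic decomposition $\boz\setminus 2B=\bigcup_k A_k$. However, the $L^2$ estimate of Remark~\ref{r1.2} is a global one, $\|\nabla u_2\|_{L^2(\boz;\rn)}\ls\|\mathbf{f}_2\|_{L^2(\boz;\rn)}$, and it carries no information about where $\nabla u_2$ is concentrated. The best it yields is
$$\lf(\fint_{2B_\boz}|\nabla u_2|^2\,dx\r)^{1/2}\ls s^{-n/2}\,\|\mathbf{f}_2\|_{L^2(\boz;\rn)}\ls s^{-n/2}\,\|\mathbf{f}\|_{L^{p,r;\tz}(\boz;\rn)},$$
where the second inequality uses the Morrey condition with $\rho=\diam(\boz)$; a dyadic split of $\mathbf{f}_2$ cannot improve the $s^{-n/2}$ factor because the energy bound never sees the separation between $A_k$ and $B$. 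Feeding this into your local reverse H\"older step and the embedding $L^{p_1}\hookrightarrow L^{p,r}$ gives, after multiplying by $s^{(\tz-n)/p}$, a final bound of order $s^{\tz/p-n/2}\|\mathbf{f}\|_{L^{p,r;\tz}(\boz;\rn)}$, which blows up as $s\to 0^+$ because $\tz\le n<np/2$. So the exponent $\delta$ you need is not there, and the hypothesis $\tz>pn/p_0$ plays no role in the computation you actually sketched.

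To make a decomposition argument work one must exploit that each $u_{2,k}$ (the solution for datum $\mathbf{f}\mathbf{1}_{A_k}$) solves the homogeneous equation in $2^kB\cap\boz$, and apply the weak reverse H\"older inequality at the matching scale $2^{k}s$, not only at scale $s$. Carrying this through does reproduce the range $\tz>np/p_1$ with $p_1\in(p,p_0)$, but it requires care with the fact that the reverse H\"older inequality is only available at radii below $r_0/4$; none of this is in your sketch. The paper avoids the entire issue: it applies the weighted Lorentz estimate of Theorem~\ref{t6.1} with the truncated power weight $\omega_z(x):=\min\{|x-z|^{-n+\tz-\epsilon},\rho^{-n+\tz-\epsilon}\}$, uses Lemma~\ref{l6.1} and Lemma~\ref{l3.2}(vi) to check $\omega_z\in A_{p/p_0'}(\rn)\cap RH_{(p_0/p)'}(\rn)$ — it is precisely the condition $\tz>np/p_0$ that guarantees the $RH_{(p_0/p)'}$ membership — and then reads off the Lorentz--Morrey bound from the structure of the weight. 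That route is both shorter and produces the sharp range directly.
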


\begin{theorem}\label{t6.4}
Let $n\ge2$, $\boz\subset\rn$ be a bounded $\mathrm{NTA}$ domain, $p\in(1,\fz)$, $r\in(0,\fz]$, and $\tz\in(0,n]$.
Assume that the matrix $A$ satisfies Assumption \ref{a1} and $\boz$ is a $(\dz,\sz,R)$ quasi-convex domain
with some $\dz,\ \sz\in(0,1)$ and $R\in(0,\fz)$.
Then there exists a positive constant $\dz_0\in(0,1)$,
depending only on $n$, $p$, $r$, $\tz$, and $\boz$,  such that, if
$\boz$ is a $(\dz,\sz,R)$ quasi-convex domain and $A$ satisfies the $(\dz,R)$-$\mathrm{BMO}$
condition for some $\dz\in(0,\dz_0)$, $\sz\in(0,1)$, and $R\in(0,\fz)$,
or $A\in\mathrm{VMO}(\boz)$, then, for any weak solution $u\in W^{1,2}_0(\boz)$ of the problem
$(D)_2$ with $\mathbf{f}\in L^{p,r;\tz}(\boz;\rn)$, $\nabla u\in L^{p,r;\tz}(\boz;\rn)$ and
\begin{equation}\label{6.5}
\|\nabla u\|_{L^{p,r;\tz}(\boz;\rn)}\le C\|\mathbf{f}\|_{L^{p,r;\tz}(\boz;\rn)},
\end{equation}
where $C$ is a positive constant depending only on $n$, $p$, $r$,
$\tz$, and $\boz$.
\end{theorem}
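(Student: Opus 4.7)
The plan is to reduce Theorem~\ref{t6.4} to Theorem~\ref{t6.2} by a localization-and-decomposition argument that closely follows the scheme for passing from Theorems~\ref{t6.1}--\ref{t6.2} to Theorem~\ref{t6.3} on Lipschitz domains. By the definition of the Lorentz--Morrey norm, it suffices to prove, uniformly in $x_0\in\overline\boz$ and $s\in(0,\diam(\boz)]$, the estimate
\[
s^{(\theta-n)/p}\|\nabla u\|_{L^{p,r}(B(x_0,s)\cap\boz;\rn)}\ls\|\mathbf{f}\|_{L^{p,r;\theta}(\boz;\rn)}.
\]
I would fix such a ball $B:=B(x_0,s)$ and a cutoff $\phi\in C^{\fz}_{\mathrm{c}}(\rn)$ with $\phi\equiv1$ on $2B$ and $\supp\phi\subset 4B$, and split $\mathbf{f}=\mathbf{f}_1+\mathbf{f}_2$ with $\mathbf{f}_1:=\phi\mathbf{f}$; by the unique solvability of $(D)_2$ (Remark~\ref{r1.2}) the solution $u$ correspondingly splits as $u=u_1+u_2$, where $u_j\in W^{1,2}_0(\boz)$ solves $(D)_2$ with datum $\mathbf{f}_j$.

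The near part is handled directly by Theorem~\ref{t6.2} with $\omega\equiv1$, yielding $\|\nabla u_1\|_{L^{p,r}(\boz;\rn)}\ls\|\mathbf{f}\|_{L^{p,r}(4B\cap\boz;\rn)}\ls s^{(n-\theta)/p}\|\mathbf{f}\|_{L^{p,r;\theta}}$ straight from the definition of the Morrey-type norm, and hence $s^{(\theta-n)/p}\|\nabla u_1\|_{L^{p,r}(B\cap\boz;\rn)}\ls\|\mathbf{f}\|_{L^{p,r;\theta}}$. For the far part, since $\mathbf{f}_2\equiv0$ on $2B$, the solution $u_2$ satisfies $\mathrm{div}(A\nabla u_2)=0$ in $2B\cap\boz$ and vanishes on $\partial\boz$, so the weak reverse H\"older inequality already produced in the proof of Theorem~\ref{t1.3}(ii)---namely Lemma~\ref{l5.1} combined with the small-BMO perturbation, self-improved via Lemma~\ref{l3.3}---applies to $u_2$ with any preassigned exponent $p_0\in(p,\fz)$, provided $\dz_0$ is chosen small enough depending on $p_0$. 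Coupling this reverse H\"older inequality with the Lorentz embedding $L^{p_0}(E)\hookrightarrow L^{p,r}(E)$ of norm $\ls|E|^{1/p-1/p_0}$ (valid for $p_0>p$ on any set $E$ of finite measure) produces
\[
\|\nabla u_2\|_{L^{p,r}(B\cap\boz;\rn)}\ls s^{n/p-n/2}\|\nabla u_2\|_{L^2(2B\cap\boz;\rn)},
\]
and it remains to dominate $s^{\theta/p-n/2}\|\nabla u_2\|_{L^2(2B\cap\boz;\rn)}$ by $\|\mathbf{f}\|_{L^{p,r;\theta}}$.

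Writing $\nabla u_2=\nabla u-\nabla u_1$, the $\nabla u_1$ piece is routine via the $L^2$-well-posedness of $(D)_2$ and the local embedding of $L^{p,r;\theta}$ into $L^2$ at scale $s$ (giving precisely the factor $s^{n/2-\theta/p}$, at least when $p\ge2$), while the $\nabla u$ piece is the very quantity being bounded. To close this loop I would introduce the monotone envelope
\[
\Phi(t):=\sup\lf\{\rho^{(\theta-n)/p}\|\nabla u\|_{L^{p,r}(B(y,\rho)\cap\boz;\rn)}:\ y\in\overline\boz,\ \rho\in(0,t]\r\}
\]
and show that the two estimates combine to $\Phi(t)\le\varepsilon\,\Phi(2t)+C\|\mathbf{f}\|_{L^{p,r;\theta}}$, with a small $\varepsilon$ extracted by taking the reverse H\"older exponent $p_0$ sufficiently large---permissible on quasi-convex domains, in contrast to the Lipschitz situation of Theorem~\ref{t6.3}, which is exactly why $\theta\in(0,n]$ is unrestricted here whereas $\theta>pn/p_0$ must be imposed there. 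A Campanato--Giaquinta style iteration then yields boundedness of $\Phi$, whence \eqref{6.5}; the case $p\in(1,2)$ follows by the same duality device already used in the proof of Theorem~\ref{t1.3}(ii). Arranging the absorbable smallness of $\varepsilon$ is the main technical obstacle; an alternative route, more in line with the extrapolation philosophy mentioned in Section~\ref{s1}, is to first treat the pure Morrey case $r=p$ by feeding the power weight $\omega_{x_0}(x):=|x-x_0|^{\theta-n}\in A_1(\rn)$ (whose $A_1$ constant is independent of $x_0\in\overline\boz$) into Theorem~\ref{t1.3}(ii) and exploiting the equivalence $\|f\|_{\cm_p^\theta(\boz)}\sim\sup_{x_0\in\overline\boz}\|f\|_{L^p_{\omega_{x_0}}(\boz)}$, and then recovering \eqref{6.5} by real interpolation in the first index between two nearby Morrey scales.
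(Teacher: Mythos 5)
Your main line of attack is a direct localization-and-decomposition argument with a Campanato--Giaquinta style iteration, which is a genuinely different route from the paper. The paper proves Theorem~\ref{t6.4} exactly as it proves Theorem~\ref{t6.3}: fix $z\in\boz$ and $\rho\in(0,\diam(\boz)]$, feed the truncated power weight $\omega_z(x):=\min\{|x-z|^{\tz-n-\epsilon},\rho^{\tz-n-\epsilon}\}$ into the weighted Lorentz estimate of Theorem~\ref{t6.2}, and use the bound $\|\mathbf f\|_{L^{p,r}_{\omega_z}(\boz;\rn)}\ls\rho^{-\epsilon/p}\|\mathbf f\|_{L^{p,r;\tz}(\boz;\rn)}$ from \cite{mp12}; the only difference from Theorem~\ref{t6.3} is that Theorem~\ref{t6.2} admits every $A_p(\rn)$ weight, whereas Theorem~\ref{t6.1} requires in addition $\omega\in RH_{(p_0/p)'}(\rn)$, which is what forces $\tz>pn/p_0$ on Lipschitz domains and why $\tz\in(0,n]$ is unrestricted here. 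Your iteration, as written, has two concrete gaps. First, the claimed smallness $\Phi(t)\le\varepsilon\,\Phi(2t)+C\|\mathbf f\|_{L^{p,r;\tz}}$ ``by taking the reverse H\"older exponent $p_0$ large'' does not hold: in the chain $\|\nabla u_2\|_{L^{p,r}(B\cap\boz;\rn)}\ls|B\cap\boz|^{1/p-1/p_0}\|\nabla u_2\|_{L^{p_0}(B\cap\boz;\rn)}\ls s^{n/p-n/2}\|\nabla u_2\|_{L^2(2B\cap\boz;\rn)}$, the exponent $p_0$ cancels, and the implicit constant actually grows with $p_0$ through the reverse H\"older constant of Lemma~\ref{l5.1}, so with a dilation factor of $2$ there is no small parameter. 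A small factor does appear if you instead compare scales $\lambda s$ and $s$ with $\lambda\ll1$---there one gets $\lambda^{\tz/p-n/p_0}$, which is small once $p_0>np/\tz$ and $\lambda$ is small---but that is a different iteration than the one you wrote, and it still needs an a~priori qualitative finiteness of the envelope $\Phi$ (the very thing you are trying to bound), hence a truncation or approximation step you have not supplied.

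Your alternative route at the end has a fatal flaw: the equivalence $\|f\|_{\cm^\tz_p(\boz)}\sim\sup_{x_0\in\overline\boz}\|f\|_{L^p_{\omega_{x_0}}(\boz)}$ with $\omega_{x_0}(x)=|x-x_0|^{\tz-n}$ is false. One inequality holds, but for $\|f\|_{L^p_{\omega_{x_0}}(\boz)}\ls\|f\|_{\cm^\tz_p(\boz)}$, decomposing $\boz$ into dyadic annuli $A_j:=\{x:\ 2^{-j-1}\diam(\boz)<|x-x_0|\le2^{-j}\diam(\boz)\}$ around $x_0$ gives $\int_\boz|f|^p\omega_{x_0}\,dx\sim\sum_{j\ge0}(2^{-j}\diam(\boz))^{\tz-n}\int_{A_j}|f|^p\,dx$, and each summand is $\ls\|f\|^p_{\cm^\tz_p(\boz)}$ with no decay in $j$, so the series diverges. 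This is precisely why the paper's weight carries the extra decay $|x-z|^{-\epsilon}$: it turns the sum into a convergent geometric series at the harmless price of a compensating factor $\rho^{-\epsilon/p}$, which is then cancelled by $\omega_z\equiv\rho^{\tz-n-\epsilon}$ on $B(z,\rho)$. The subsequent ``real interpolation in the first index between two nearby Morrey scales'' is also hazardous---Morrey spaces are not a real interpolation scale---and the paper avoids the issue entirely by extracting the Lorentz second index directly from the weighted Lorentz estimate of Theorem~\ref{t6.2} rather than by interpolating Morrey-type spaces after the fact.
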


As corollaries of Theorems \ref{t6.3} and \ref{t6.4}, we have the following
global gradient estimates in Morrey spaces.

\begin{corollary}\label{c6.1}
Let $\boz\subset\rn$ be a bounded $\mathrm{NTA}$ domain and $A$ satisfy Assumption \ref{a1}.
\begin{itemize}
\item[\rm(i)] Assume further that $\boz$ is a bounded Lipschitz domain, $\uc_0$ is as in Theorem \ref{t1.2}(i),
$p_0:=3+\uc_0$ when $n\ge3$, or $p_0:=4+\uc_0$ when $n=2$, $p\in(p_0',p_0)$, and $\tz\in(pn/p_0,n]$.
Then there exists a positive constant $\dz_0\in(0,\fz)$,
depending on $n$, $p$, $\tz$, and the Lipschitz constant of $\boz$, such that, if
$A$ satisfies the $(\dz,R)$-$\mathrm{BMO}$ condition for some $\dz\in(0,\dz_0)$ and $R\in(0,\fz)$,
or $A\in\mathrm{VMO}(\boz)$,
then there exists a positive constant $C$, depending only on $n$, $p$, $\tz$, and
the Lipschitz constant of $\boz$, such that,
for any weak solution $u\in W^{1,2}_0(\boz)$ of the problem $(D)_2$
with $\mathbf{f}\in \cm^{\tz}_p(\boz;\rn)$,
$\nabla u\in \cm^{\tz}_p(\boz;\rn)$ and
\begin{equation*}
\|\nabla u\|_{\cm^{\tz}_p(\boz;\rn)}\le C\|\mathbf{f}\|_{\cm^{\tz}_p(\boz;\rn)}.
\end{equation*}
\item[\rm(ii)] Let $p\in(1,\fz)$ and $\tz\in(0,n]$.
Then there exists a positive constant $\dz_0\in(0,1)$,
depending only on $n$, $p$, $\tz$, and $\boz$,  such that, if
$\boz$ is a $(\dz,\sz,R)$ quasi-convex domain and $A$ satisfies the $(\dz,R)$-$\mathrm{BMO}$
condition for some $\dz\in(0,\dz_0)$, $\sz\in(0,1)$,
and $R\in(0,\fz)$, or $A\in\mathrm{VMO}(\boz)$,  then, for any weak solution $u\in W^{1,2}_0(\boz)$ of the problem
$(D)_2$ with $\mathbf{f}\in \cm^{\tz}_p(\boz;\rn)$, $\nabla u\in \cm^{\tz}_p(\boz;\rn)$ and
\begin{equation*}
\|\nabla u\|_{\cm^{\tz}_p(\boz;\rn)}\le C\|\mathbf{f}\|_{\cm^{\tz}_p(\boz;\rn)},
\end{equation*}
where $C$ is a positive constant depending only on $n$, $p$, $\tz$, and $\boz$.
\end{itemize}
\end{corollary}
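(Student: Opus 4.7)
The plan is to derive Corollary \ref{c6.1} as an immediate specialization of Theorems \ref{t6.3} and \ref{t6.4}. The key structural observation is that, by Definition \ref{d6.2}, the Morrey space $\cm^\tz_p(\boz)$ is identical to the (unweighted) Lorentz--Morrey space $L^{p,r;\tz}(\boz)$ in the diagonal case $r=p$, because the Lorentz norm $\|\cdot\|_{L^{p,p}(B(x,s)\cap\boz)}$ coincides (up to the constant $p^{1/p}$) with the Lebesgue norm $\|\cdot\|_{L^p(B(x,s)\cap\boz)}$. The corresponding identification extends componentwise to the vector-valued spaces, so that $\cm^\tz_p(\boz;\rn)=L^{p,p;\tz}(\boz;\rn)$ with equivalent norms.

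For part (i), I would fix $\uc_0$ as in Theorem \ref{t1.2}(i), together with $p_0$, $p\in(p_0',p_0)$, and $\tz\in(pn/p_0,n]$ satisfying the hypotheses stated in Corollary \ref{c6.1}(i). Applying Theorem \ref{t6.3} with the choice $r:=p$ produces a constant $\dz_0\in(0,\fz)$, depending only on $n$, $p$, $\tz$, and the Lipschitz constant of $\boz$, such that, whenever $A$ satisfies the $(\dz,R)$-$\mathrm{BMO}$ condition for some $\dz\in(0,\dz_0)$ and $R\in(0,\fz)$ or $A\in\mathrm{VMO}(\boz)$, the estimate \eqref{6.4} in the Lorentz--Morrey norm $L^{p,p;\tz}$ holds. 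Reading this estimate through the identification $L^{p,p;\tz}(\boz;\rn)=\cm^\tz_p(\boz;\rn)$ yields the desired Morrey bound.

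For part (ii), I would argue in the same way, invoking Theorem \ref{t6.4} with $r:=p$ on the bounded NTA domain $\boz$ assumed to be $(\dz,\sz,R)$ quasi-convex. This provides $\dz_0\in(0,1)$ depending on $n$, $p$, $\tz$, and $\boz$, along with the inequality \eqref{6.5} with $r=p$; again the identification $L^{p,p;\tz}=\cm^\tz_p$ converts this into the statement of Corollary \ref{c6.1}(ii). Both the constant $C$ and the smallness threshold $\dz_0$ inherit their dependencies from those in Theorems \ref{t6.3} and \ref{t6.4}, matching exactly the dependencies asserted in the corollary.

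There is essentially no obstacle beyond correctly tracking the parameter dependencies, since the corollary is a direct specialization of two already-stated theorems. The only point that deserves care is to verify that the equivalence $\|\cdot\|_{L^{p,p}(\cdot)}\sim\|\cdot\|_{L^p(\cdot)}$, localized to each ball $B(x,s)\cap\boz$ and then supremized with the weight $s^{(\tz-n)/p}$, preserves the Morrey structure and introduces only a multiplicative constant depending on $p$; this is routine but should be noted to justify absorbing it into $C$.
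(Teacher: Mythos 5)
Your proposal is correct and is exactly the paper's argument: the paper states immediately after Definition \ref{d6.1} that $L^{q,q}_\omega(\boz)=L^q_\omega(\boz)$, and after Definition \ref{d6.2} that $L^{p,p;\tz}(\boz)$ is just the Morrey space $\cm^\tz_p(\boz)$, so Corollary \ref{c6.1} is precisely Theorems \ref{t6.3} and \ref{t6.4} with $r:=p$. One small refinement: because the paper's Lorentz norm carries the prefactor $q$ inside the integral, the layer-cake formula gives $\|\cdot\|_{L^{p,p}}=\|\cdot\|_{L^p}$ exactly, so no multiplicative constant actually needs to be absorbed.
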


\begin{remark}\label{r6.1}
Let $\boz\subset\rn$ be a bounded NTA domain and $A:=a+b$ satisfy
Assumption \ref{a1}. For the Dirichlet problem \eqref{1.4},
the estimates \eqref{6.3} and \eqref{6.5} were established in
\cite[Corollary 2.2 and Theorem 2.3]{amp18} under the assumptions that $a$ satisfies the $(\dz,R)$-BMO
condition for some small $\dz\in(0,\fz)$ and some $R\in(0,\fz)$, $b\equiv0$,
and $\boz$ is a bounded Lipschitz domain with small Lipschitz constants.
Thus, the estimates \eqref{6.3} and \eqref{6.5} improve \cite[Corollary 2.2 and Theorem 2.3]{amp18}
via weakening the assumptions on the matrix $A$ and the domain $\boz$.

Moreover, some estimates similar to \eqref{6.3} and \eqref{6.5} for the Dirichlet problem
of some nonlinear elliptic or parabolic equations on Reifenberg flat domains
were obtained in \cite{ap15,bd17a,bd17,mp12,mp11}.
\end{remark}

To show Theorem \ref{t6.3} via using Theorem \ref{t6.1},
we need the following lemma, which is well known (see, for instance,
\cite[Section 7.1.2]{g14} and \cite[Lemma 3.4]{mp12}).

\begin{lemma}\label{l6.1}
\begin{itemize}
\item[{\rm(i)}] Let $s\in[1,\fz)$, $\omega\in A_s(\rn)$, $z\in\rn$, and $k\in(0,\fz)$
be a constant. Assume that $\tau^z(\omega)(\cdot):=\omega(\cdot-z)$ and
$\omega_k:=\min\{\omega,\,k\}$. Then $\tau^z(\omega)\in A_s(\rn)$ and
$[\tau^z(\omega)]_{A_s(\rn)}=[\omega]_{A_s(\rn)}$, and
$\omega_k\in A_s(\rn)$ and $[\omega_k]_{A_s(\rn)}\le c_{(s)}[\omega]_{A_s(\rn)}$,
where $c_{(s)}:=1$ when $s\in[1,2]$, and $c_{(s)}:=2^{s-1}$ when $s\in(2,\fz)$.
\item[{\rm(ii)}] For any $x\in\rn$, let $\omega_\gz(x):=|x|^\gz$,
where $\gz\in\rr$ is a constant. Then, for any given $s\in(1,\fz)$, $\omega_\gz\in A_s(\rn)$
if and only if $\gz\in(-n,n[s-1])$. Moreover, $[\omega_\gz]_{A_s(\rn)}\le C_{(n,\,s,\,\gz)}$,
where $C_{(n,\,s,\,\gz)}$ is a positive constant depending only on $n$, $s$, and $\gz$.
\end{itemize}
\end{lemma}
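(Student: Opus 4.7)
The plan is to establish each assertion of the lemma by direct computations from the defining quantities in Definition \ref{d1.1}. For the translation identity in (i), I would note that $B(x, r) \mapsto B(x-z, r)$ is a bijection of the family of open balls of $\rn$ onto itself and that, for any non-negative measurable $g$, the substitution $y \mapsto y - z$ gives $\frac{1}{|B(x,r)|}\int_{B(x,r)} g(y-z)\,dy = \frac{1}{|B(x-z, r)|}\int_{B(x-z, r)} g(y)\,dy$, with an analogous change of variable for essential suprema. Applying this with $g = \omega$, $g = \omega^{-1/(s-1)}$ (respectively $g = \omega^{-1}$ when $s = 1$) shows that the supremum defining $[\tau^z(\omega)]_{A_s(\rn)}$ coincides term-by-term with that defining $[\omega]_{A_s(\rn)}$, yielding the claimed equality.

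For the truncation bound in (i), I would treat $s = 1$ separately. Given a ball $B$, split into the two sub-cases $\einf_{y \in B}\omega(y) \le k$ and $\einf_{y \in B}\omega(y) > k$. In the first, $\einf_{y \in B}\omega_k(y) = \einf_{y \in B}\omega(y)$ and $\frac{1}{|B|}\int_B \omega_k\,dy \le \frac{1}{|B|}\int_B \omega\,dy$, so the $A_1$ quotient on $B$ is at most $[\omega]_{A_1(\rn)}$; in the second, $\omega_k \equiv k$ almost everywhere on $B$ and the quotient equals $1 \le [\omega]_{A_1(\rn)}$. For $s > 1$, the cleanest route I see is a one-parameter monotonicity argument: introduce the family $\omega^{(t)} := \min\{\omega, t\}$ for $t \in [k, \fz)$, set $J_1(t) := \frac{1}{|B|}\int_B \omega^{(t)}\,dy$ and $J_2(t) := \frac{1}{|B|}\int_B [\omega^{(t)}]^{-1/(s-1)}\,dy$, and differentiate to obtain
\begin{equation*}
\frac{d}{dt}\lf[J_1(t)\, J_2(t)^{s-1}\r] = \mu(t)\, J_2(t)^{s-2}\lf[J_2(t) - J_1(t)\, t^{-s/(s-1)}\r],
\end{equation*}
where $\mu(t) := |B|^{-1}|\{y \in B:\ \omega(y) > t\}|$. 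The pointwise bounds $\omega^{(t)}(y) \le t$ and $[\omega^{(t)}(y)]^{-1/(s-1)} \ge t^{-1/(s-1)}$ yield $J_1(t) \le t$ and $J_2(t) \ge t^{-1/(s-1)}$, whence $J_1(t)\, t^{-s/(s-1)} \le t^{-1/(s-1)} \le J_2(t)$, so the bracket is non-negative and $t \mapsto J_1(t)\, J_2(t)^{s-1}$ is non-decreasing on $[k, \fz)$. Evaluating at $t = k$ and letting $t \to \fz$ (via monotone convergence) and then supremizing over $B$ yields $[\omega_k]_{A_s(\rn)} \le [\omega]_{A_s(\rn)}$, in fact sharper than the claimed bound. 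The weaker constant $c_{(s)} = 2^{s-1}$ for $s > 2$ quoted in the lemma can alternatively be recovered by the elementary route using $\omega_k^{-1/(s-1)} \le \omega^{-1/(s-1)} + k^{-1/(s-1)}$ together with the convexity inequality $(a+b)^{s-1} \le 2^{s-2}(a^{s-1} + b^{s-1})$ and $[\omega]_{A_s(\rn)} \ge 1$.

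For (ii), necessity of $\gz \in (-n, n(s-1))$ follows from testing on $B(\mathbf{0}, 1)$: the integral $\int_{B(\mathbf{0}, 1)} |x|^\gz\,dx$ diverges when $\gz \le -n$, and $\int_{B(\mathbf{0}, 1)} |x|^{-\gz/(s-1)}\,dx$ diverges when $\gz \ge n(s-1)$, in either case precluding $\omega_\gz \in A_s(\rn)$. For quantitative sufficiency, I would fix an arbitrary $B := B(x_0, r)$ and split into two cases. In case (a), if $|x_0| \le 2r$, then $B \subset B(\mathbf{0}, 3r)$ and polar coordinates give the explicit evaluations $\int_{B(\mathbf{0}, 3r)} |x|^\gz\,dx = c_n (3r)^{n+\gz}/(n+\gz)$ and $\int_{B(\mathbf{0}, 3r)} |x|^{-\gz/(s-1)}\,dx = c_n (3r)^{n - \gz/(s-1)}/(n - \gz/(s-1))$; their product divided by $|B|^s \sim r^{ns}$ is a dimensionless constant depending only on $n$, $s$, and $\gz$. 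In case (b), if $|x_0| > 2r$, then every $x \in B$ satisfies $|x_0|/2 \le |x| \le 3|x_0|/2$, so $\omega_\gz(x) \sim |x_0|^\gz$ and $\omega_\gz(x)^{-1/(s-1)} \sim |x_0|^{-\gz/(s-1)}$ with constants depending only on $|\gz|$ and $s$, and multiplying gives a uniform bound. Taking the supremum over $B$ produces $[\omega_\gz]_{A_s(\rn)} \le C_{(n,\,s,\,\gz)}$.

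The main technical obstacle is the monotonicity computation for the truncation step in (i): one must identify precisely the sign of the derivative of $J_1(t)\, J_2(t)^{s-1}$ and exploit the two pointwise bounds $J_1(t) \le t$ and $J_2(t) \ge t^{-1/(s-1)}$ to verify that the bracket is non-negative. The remaining steps reduce to standard changes of variable and elementary polar-coordinate computations.
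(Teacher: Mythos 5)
Your proposal is correct, but note that the paper does not actually prove Lemma \ref{l6.1}: it is quoted as well known, with references to \cite[Section 7.1.2]{g14} and \cite[Lemma 3.4]{mp12}, so any comparison is with the standard cited arguments rather than with an in-paper proof. Your treatment of the translation identity and of part (ii) (necessity by testing on $B(\mathbf{0},1)$, sufficiency by the dichotomy $|x_0|\le 2r$ versus $|x_0|>2r$) is exactly the classical computation. For the truncation claim your route is genuinely different from, and stronger than, the standard one: the monotonicity of $t\mapsto J_1(t)J_2(t)^{s-1}$ on $[k,\fz)$, obtained from the a.e.\ derivative identity together with the pointwise bounds $J_1(t)\le t$ and $J_2(t)\ge t^{-1/(s-1)}$, is correct (I checked the derivative computation: $J_1'(t)=\mu(t)$ and $J_2'(t)=-\frac{1}{s-1}t^{-s/(s-1)}\mu(t)$ a.e.), and it yields $[\omega_k]_{A_s(\rn)}\le[\omega]_{A_s(\rn)}$ for every $s\in[1,\fz)$, which implies the stated bound with $c_{(s)}$ and improves it for $s\in(2,\fz)$; the elementary route you sketch via $\omega_k^{-1/(s-1)}\le\omega^{-1/(s-1)}+k^{-1/(s-1)}$ and $[\omega]_{A_s(\rn)}\ge1$ is the one that produces the quoted constant $2^{s-1}$. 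Two small points deserve a sentence in a polished write-up: (1) to pass from the a.e.\ sign of the derivative to monotonicity you should observe that $J_1$ and $J_2$ are locally Lipschitz on $[k,\fz)$ (the $t$-derivative of the integrands is bounded on compact subintervals), and that $J_2(k)<\fz$ because $\omega\in A_s(\rn)$, so the $t\to\fz$ limits exist by monotone convergence; (2) in case (a) of part (ii) the quantity to bound is $\fint_B\omega_\gz\,dx\,\bigl(\fint_B\omega_\gz^{-1/(s-1)}\,dx\bigr)^{s-1}$, i.e.\ the second integral enters raised to the power $s-1$ before dividing by the appropriate power of $r^{n}$; the scaling count is unchanged, but the phrase ``their product divided by $|B|^s$'' should be adjusted accordingly.
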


Now, we show Theorem \ref{t6.3} by using Theorem \ref{t6.1} and Lemma \ref{l6.1}.

\begin{proof}[Proof of Theorem \ref{t6.3}]
We prove this theorem via borrowing some ideas from \cite{mp12,mp11}.
Let $p\in(p_0',p_0)$, $r\in(0,\fz]$, and $\tz\in(pn/p_0,n]$, where $p_0$ is as in Theorem \ref{t6.3}.
Assume that $u$ is the weak solution of the Dirichlet problem \eqref{1.4}
with $\mathbf{f}\in L^{q,r;\tz}(\boz;\rn)$.
For any $x,\,z\in\boz$, $\rho\in(0,\diam(\boz)]$, and $\epsilon\in(0,\tz-\frac{pn}{p_0})$,
let
$$\omega_z(x):=\min\lf\{|x-z|^{-n+\tz-\epsilon},\,\rho^{-n+\tz-\epsilon}\r\}.
$$
Then, by Lemma \ref{l6.1}, we conclude that, for any given $z\in\boz$,
$\omega_z\in A_s(\rn)$ for any given $s\in(1,\fz)$,
and there exists a positive constant $C_{(n,\,s,\,\tz)}$, depending only on
$n$, $s$, and $\tz$, such that $[\omega_z]_{A_s(\rn)}\le C_{(n,\,s,\,\tz)}$.
Moreover, from the assumptions $\tz>np/p_0$ and $\epsilon\in(0,\tz-\frac{pn}{p_0})$,
it follows that $\tz-n-\epsilon>-n/(\frac{p_0}{p})'$. By this, and Lemmas \ref{l3.2}(vi)
and \ref{l6.1}, we conclude that, for any given $z\in\boz$,
$\omega_z\in RH_{(\frac{p_0}{p})'}(\rn)$ and
$[\omega_z]_{RH_{(\frac{p_0}{p})'}(\rn)}\ls1$, which, combined with
Theorem \ref{t6.1} and the assumption that, for any $x\in B(z,\rho)$,
$\omega_z(x)=\rho^{-n+\tz-\epsilon}$, further implies that, for any $z\in\boz$
and $\rho\in(0,\diam(\boz)]$,
\begin{align}\label{6.6}
\|\nabla u\|_{L^{p,r}(B(z,\rho)\cap\boz;\rn)}=\rho^{\frac{n-\tz+\epsilon}{p}}
\|\nabla u\|_{L^{p,r}_{\omega_z}(B(z,\rho)\cap\boz;\rn)}
\ls\rho^{\frac{n-\tz+\epsilon}{p}}\|\mathbf{f}\|_{L^{p,r}_{\omega_z}(\boz;\rn)}.
\end{align}
Moreover, similarly to the proofs of \cite[(5.12) and (5.14)]{mp12},
we know that, for any $z\in\boz$ and $\rho\in(0,\diam(\boz)]$,
$$\|\mathbf{f}\|_{L^{p,r}_{\omega_z}(\boz;\rn)}\ls\|\mathbf{f}\|_{L^{p,r;\tz}(\boz;\rn)}
\rho^{-\frac{\epsilon}{p}},
$$
which, together with \eqref{6.6}, implies that, for any $z\in\boz$ and $\rho\in(0,\diam(\boz)]$,
$$\|\nabla u\|_{L^{p,r}(B(z,\rho)\cap\boz;\rn)}\ls\rho^{\frac{n-\tz}{p}}\|\mathbf{f}\|_{L^{p,r;\tz}(\boz;\rn)}.
$$
From this and the definition of $L^{p,r;\tz}(\boz;\rn)$, we deduce that \eqref{6.4} holds true,
which completes the proof of Theorem \ref{t6.3}.
\end{proof}

\begin{proof}[Proof of Theorem \ref{t6.4}]
The proof of this theorem is similar to that of Theorem \ref{t6.3}. We omit the details here.
\end{proof}

In what follows, a function $f:\ [0,\fz)\to[0,\fz]$ is said to be \emph{almost
increasing} (resp., \emph{almost decreasing}) if there exists a positive constant $L\in[1,\fz)$
such that, for any $s,\,t\in[0,\fz)$ satisfying $s\le t$, $f(s)\le Lf(t)$
[resp., $f(s)\ge Lf(t)$]; in particular, if $L:=1$, then
$f$ is said to be increasing (resp., decreasing). Now, we recall the definitions of weak
$\Phi$-functions and Musielak--Orlicz spaces (also called generalized Orlicz spaces)
as follows (see, for instance, \cite{ch18,ylk17}).
Recall that the \emph{symbol $t\to0^+$} means $t\in(0,\fz)$ and $t\to0$.

\begin{definition}\label{d6.3}
Let $\fai:\,[0,\fz)\to[0,\fz]$ be an increasing function satisfying that
$$\fai(0)
=\lim_{t\to0^+}\fai(t)=0\quad \text{and}\quad \lim_{t\to\fz}\fai(t)=\fz.
$$
\begin{itemize}
\item[{\rm(i)}] Then $\fai$ is called a \emph{weak $\Phi$-function}, denoted by
$\fai\in\Phi_w$, if $t\to\frac{\fai(t)}{t}$ is almost increasing on $(0,\fz)$.
\item[{\rm(ii)}] The \emph{left-continuous generalized inverse} of $\fai$,
denoted by $\fai^{-1}$, is defined by setting, for any $s\in[0,\fz]$,
$$\fai^{-1}(s):=\inf\lf\{t\in[0,\fz):\ \fai(t)\ge s\r\}.$$
\item[{\rm(iii)}] The \emph{conjugate $\Phi$-function} of $\fai$, denoted by $\fai^\ast$,
is defined by setting, for any $t\in[0,\fz)$,
$$\fai^\ast(t):=\sup_{s\in[0,\fz)}\{st-\fai(s)\}.
$$
\item[{\rm(iv)}] Let $E\subset\rn$ be a measurable set. A function
$\fai:\ E\times[0,\fz)\to[0,\fz]$ is called a
\emph{Musielak--Orlicz function} (or a \emph{generalized $\Phi$-function}) on $E$
if it satisfies
\begin{enumerate}
\item[$\mathrm{(iv)_1}$] for any $t\in[0,\fz)$, $\fai(\cdot,t)$ is measurable;
\item[$\mathrm{(iv)_2}$] for almost every $x\in E$,
$\fai(x,\cdot)\in\Phi_w$.
\end{enumerate}
Then the set $\Phi_w(E)$ is defined to be the collection of all
Musielak--Orlicz functions on $E$.
\end{itemize}
\end{definition}

\begin{definition}\label{d6.4}
Let $E\subset\rn$ be a measurable set and $\fai\in\Phi_w(E)$. For any given
$f\in L^1_\loc(E)$, the \emph{Musielak--Orlicz modular} of $f$ is defined by setting
$$\rho_\fai(f):=\int_{E}\fai(x,|f(x)|)\,dx.
$$
Then the \emph{Musielak--Orlicz space} (also called \emph{generalized Orlicz space})
$L^\fai(E)$ is defined by setting
\begin{align*}
&L^\fai(E):=\Big\{u\ \text{is measurable on}\ E:\ \\
&\quad\quad\quad\quad\quad\text{there exists a}\ \lz\in(0,\fz)\
\text{such that}\ \rho_\fai(\lz f)<\fz\Big\}
\end{align*}
equipped with the \emph{Luxembourg} (also called the \emph{Luxembourg--Nakano}) \emph{norm}
\begin{equation*}
\|u\|_{L^\fai(E)}:=\inf\lf\{\lz\in(0,\fz):\
\rho_\fai\lf(\frac{u}{\lz}\r)\le1\r\}.
\end{equation*}
\end{definition}

To obtain the global gradient estimates for the Dirichlet problem
in the scale of Musielak--Orlicz spaces, we need several additional assumptions
for the Musielak--Orlicz function $\fai$.
Let $E\subset\rn$ be a measurable set, $\fai\in\Phi_w(E)$, and $p\in(0,\fz)$.

\medskip

\noindent {\bf Assumption (A0).} There exist positive constants
$\beta\in(0,1)$ and $\gamma\in(0,\fz)$ such that,
for any $x\in E$, $\fai(x,\beta\gamma)\le1\le\fai(x,\gamma)$.

\medskip

\noindent {\bf Assumption (A1).} There exists a $\bz\in(0,1)$
such that, for any $x,\,y\in E$ satisfying $|x-y|\le1$, and any $t\in[1,|x-y|^{-n}]$,
$\bz\fai^{-1}(x,t)\le\fai^{-1}(y,t)$.

\medskip

\noindent {\bf Assumption (A2).} There exist $\bz,\,\sz\in(0,\fz)$
and $h\in L^1(E)\cap L^\fz(E)$ such that, for any $t\in[0,\sz]$ and $x,\ y\in E$,
$$\fai(x,\bz t)\le\fai(y,t)+h(x)+h(y).
$$

\medskip

\noindent {\bf Assumption $\mathrm{\textbf{(aInc)}}_p$.} The function
$s\to \frac{\fai(x,s)}{s^p}$ is almost increasing uniformly in $x\in E$.

\medskip

\noindent {\bf Assumption $\mathrm{\textbf{(aDec)}}_p$.} The function
$s\to \frac{\fai(x,s)}{s^p}$ is almost decreasing uniformly in $x\in E$.

\medskip

Using the weighted global gradient estimates obtained in Theorems \ref{t1.2}(ii)
and \ref{t1.3}(ii), and the limited range extrapolation theorem
established in \cite[Theorem 4.18 and Corollary 4.21]{ch18} in the scale of
Musielak--Orlicz spaces, we obtain the following global gradient estimates in
Musielak--Orlicz spaces for the Dirichlet problem $(D)_p$ on bounded Lipschitz domains
and $(\dz,\sz,R)$ quasi-convex domains.

\begin{theorem}\label{t6.5}
Let $A$ and $\boz$ be as in Theorem \ref{t6.1}, $\uc_0$ as in Theorem \ref{t1.2}(i),
$p_0:=3+\uc_0$ when $n\ge3$, or $p_0:=4+\uc_0$ when $n=2$, and $p_1,\,p_2\in(p_0',p_0)$
with $p_1\le p_2$.
Assume that $\fai\in\Phi_w(\boz)$ satisfies Assumptions $(A0)$ -- $(A2)$,
$\mathrm{(aInc)}_{p_1}$, and $\mathrm{(aDec)}_{p_2}$.
Then there exists a positive constant $\dz_0\in(0,\fz)$,
depending only on $n$, $\fai$, and the Lipschitz constant of $\boz$,  such that, if
$A$ satisfies the $(\dz,R)$-$\mathrm{BMO}$ condition for some $\dz\in(0,\dz_0)$ and $R\in(0,\fz)$,
or $A\in\mathrm{VMO}(\boz)$,
then, for any weak solution $u\in W^{1,2}_0(\boz)$ of the Dirichlet problem $(D)_2$
with $\mathbf{f}\in L^\fai(\boz;\rn)$,
$\nabla u\in L^\fai(\boz;\rn)$ and
\begin{equation*}
\|\nabla u\|_{L^\fai(\boz;\rn)}\le
C\|\mathbf{f}\|_{L^\fai(\boz;\rn)},
\end{equation*}
where $C$ is a positive constant depending only on $n$, $\fai$,
$\diam(\boz)$, and the Lipschitz constant of $\boz$.
\end{theorem}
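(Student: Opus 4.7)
The plan is to deduce the Musielak--Orlicz estimate from the weighted $L^p$ bound of Theorem \ref{t1.2}(ii) by invoking the limited range Rubio de Francia extrapolation theorem developed in \cite[Theorem 4.18 and Corollary 4.21]{ch18}, treating the solution map as an abstract sublinear operator and letting the weighted estimate do all of the PDE work.

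First, I would introduce the solution operator $T$ on $L^2(\boz;\rn)$ defined by $T(\mathbf{f}):=\nabla u_{\mathbf{f}}$, where $u_{\mathbf{f}}\in W^{1,2}_0(\boz)$ is the unique weak solution of \eqref{1.4} guaranteed by Remark \ref{r1.2}. Fix a base exponent $q:=2$, which lies strictly inside $(p_0',p_0)$ since $p_0>2$. Theorem \ref{t1.2}(ii) then provides, for every weight $\omega\in A_{q/p_0'}(\rn)\cap RH_{(p_0/q)'}(\rn)$, the uniform bound
\begin{align*}
\|T(\mathbf{f})\|_{L^q_\omega(\boz;\rn)}\le C\|\mathbf{f}\|_{L^q_\omega(\boz;\rn)},
\end{align*}
with $C$ controlled solely in terms of $n$, the Lipschitz constant of $\boz$, $[\omega]_{A_{q/p_0'}(\rn)}$, and $[\omega]_{RH_{(p_0/q)'}(\rn)}$, provided $A$ satisfies the $(\dz,R)$-$\mathrm{BMO}$ condition with $\dz<\dz_0$ or $A\in\mathrm{VMO}(\boz)$.

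Second, I would invoke the limited range extrapolation theorem of \cite{ch18}: any (sub)linear operator enjoying such a uniform weighted inequality at a single exponent $q\in(p_0',p_0)$ automatically extends to a bounded operator on $L^\fai(\boz)$ for every $\fai\in\Phi_w(\boz)$ satisfying Assumptions (A0)--(A2) together with $(\mathrm{aInc})_{p_1}$ and $(\mathrm{aDec})_{p_2}$ for some $p_0'<p_1\le p_2<p_0$. Applying the theorem to $T$ with our $\fai$ yields $\|\nabla u\|_{L^\fai(\boz;\rn)}\ls \|\mathbf{f}\|_{L^\fai(\boz;\rn)}$ directly. A routine density argument, relying on the fact that $L^2(\boz;\rn)\cap L^\fai(\boz;\rn)$ is dense in $L^\fai(\boz;\rn)$ under (A0)--(A2), then passes the estimate from smooth data to arbitrary $\mathbf{f}\in L^\fai(\boz;\rn)$.

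The main technical point will be the reconciliation of the threshold $\dz_0$. In Theorem \ref{t1.2}(ii) the admissible $\dz_0$ depends on the weight characteristics, whereas the extrapolation machinery requires a \emph{single} $\dz_0$ that works simultaneously for an entire Rubio de Francia iteration family of weights. The remedy is to fix $\fai$ before choosing $\dz_0$: Assumptions $(\mathrm{aInc})_{p_1}$ and $(\mathrm{aDec})_{p_2}$ with $p_0'<p_1\le p_2<p_0$, together with (A0)--(A2), confine the relevant iterated weights to a sub-class of $A_{q/p_0'}(\rn)\cap RH_{(p_0/q)'}(\rn)$ whose characteristics are bounded solely in terms of the structural constants of $\fai$. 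Consequently a single $\dz_0=\dz_0(n,\fai,\diam(\boz),\text{Lip}(\boz))$ suffices for the entire iteration, which is exactly the statement of Theorem \ref{t6.5}.
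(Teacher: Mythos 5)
Your proposal is correct and follows essentially the same route as the paper: apply the weighted $W^{1,p}$ estimate from Theorem \ref{t1.2}(ii) at a fixed exponent in $(p_0',p_0)$ and then invoke the limited-range Rubio de Francia extrapolation theorem for generalized Orlicz spaces from \cite{ch18} (which the paper records as Lemma \ref{l6.2}) with $f:=|\nabla u|$, $h:=|\mathbf{f}|$. Your choice of base exponent $q=2$ rather than a generic $p\in(p_0',p_0)$ is immaterial. The paper's proof is terser than yours: it simply plugs into Lemma \ref{l6.2} without mentioning either the density step or the $\dz_0$-reconciliation. Your last paragraph, identifying the tension between a weight-dependent $\dz_0$ in Theorem \ref{t1.2}(ii) and the need for a single $\dz_0$ valid across the whole Rubio de Francia iteration family, is a genuine subtlety that the paper's proof glosses over, and your resolution (for fixed $\fai$ the iterated weights have $A_{q/p_0'}$ and $RH_{(p_0/q)'}$ characteristics controlled by the structural constants of $\fai$ alone, so one can take the infimum of the corresponding thresholds) is the correct way to make the argument rigorous. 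The density argument you append is not strictly required here, since the theorem already assumes $u\in W^{1,2}_0(\boz)$ is a weak solution, so $\mathbf{f}\in L^2\cap L^\fai$ is built into the hypotheses and Lemma \ref{l6.2} can be applied directly to the fixed pair $(|\nabla u|,|\mathbf{f}|)$ with the convention that the weighted inequality is vacuous whenever the right-hand side is infinite.
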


\begin{theorem}\label{t6.6}
Let $n\ge2$, $\boz\subset\rn$ be a bounded $\mathrm{NTA}$ domain,
and $p_1,\ p_2\in(1,\fz)$ with $p_1\le p_2$.
Assume that the matrix $A$ satisfies Assumption \ref{a1} and $\fai\in\Phi_w(\boz)$
satisfies Assumptions $(A0)$--$(A2)$, $\mathrm{(aInc)}_{p_1}$, and $\mathrm{(aDec)}_{p_2}$.
Then there exists a positive constant $\dz_0\in(0,1)$,
depending only on $n$, $\fai$, and $\boz$,  such that, if
$\boz$ is a $(\dz,\sz,R)$ quasi-convex domain and $A$ satisfies the $(\dz,R)$-$\mathrm{BMO}$
condition for some $\dz\in(0,\dz_0)$, $\sz\in(0,1)$,
and $R\in(0,\fz)$, or $A\in\mathrm{VMO}(\boz)$, then, for any weak solution $u\in W^{1,2}_0(\boz)$ of the problem
$(D)_2$ with $\mathbf{f}\in L^\fai(\boz;\rn)$, $\nabla u\in L^\fai(\boz;\rn)$ and
\begin{equation}\label{6.7}
\|\nabla u\|_{L^\fai(\boz;\rn)}\le
C\|\mathbf{f}\|_{L^\fai(\boz;\rn)},
\end{equation}
where $C$ is a positive constant depending only on $n$, $\fai$, and $\boz$.
\end{theorem}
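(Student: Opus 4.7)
The plan is to derive \eqref{6.7} from the weighted $L^p$ estimates supplied by Theorem \ref{t1.3}(ii), by invoking the limited range extrapolation theorem in Musielak--Orlicz spaces established in \cite[Theorem 4.18 and Corollary 4.21]{ch18}. The overall philosophy is the same as in the proof of Theorem \ref{t6.5}: one first encodes the Dirichlet problem as a linear mapping from data to gradient, applies the weighted theorem on the Lebesgue side, and then extrapolates.

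First, I would fix an intermediate exponent $p\in(p_1,p_2)$ and, for any $\mathbf{f}\in L^2(\boz;\rn)$, define the linear solution operator $T(\mathbf{f}):=\nabla u_{\mathbf{f}}$, where $u_{\mathbf{f}}\in W^{1,2}_0(\boz)$ is the weak solution of $(D)_2$ with datum $\mathbf{f}$ guaranteed by Remark \ref{r1.2}. Theorem \ref{t1.3}(ii) then asserts that, for every $\omega\in A_p(\rn)$, there is a $\dz_0:=\dz_0(n,p,\boz,[\omega]_{A_p(\rn)})\in(0,1)$ such that, whenever $\boz$ is $(\dz,\sz,R)$ quasi-convex and $A$ satisfies the $(\dz,R)$-$\mathrm{BMO}$ condition for some $\dz\in(0,\dz_0)$ and $\sz,\,R$ as in the statement (or $A\in\mathrm{VMO}(\boz)$), $T$ extends to a bounded linear operator on $L^p_\omega(\boz;\rn)$ with operator norm controlled by $n$, $p$, $\boz$, and $[\omega]_{A_p(\rn)}$ only. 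I would then feed this family of weighted Lebesgue estimates into the extrapolation machinery of \cite{ch18}: Assumptions $(A0)$--$(A2)$ on $\fai$ are the standard regularity hypotheses that make $L^\fai(\boz)$ a well-behaved function space (in particular, they secure the density of simple functions and the boundedness of the Hardy--Littlewood maximal operator in the relevant range), while $\mathrm{(aInc)}_{p_1}$ and $\mathrm{(aDec)}_{p_2}$ position $L^\fai(\boz)$ between $L^{p_1}(\boz)$ and $L^{p_2}(\boz)$ in a manner compatible with Muckenhoupt $A_p$ theory for every $p\in(p_1,p_2)$. Under these hypotheses, \cite[Theorem 4.18]{ch18} directly converts the weighted Lebesgue estimate for $T$ into the desired inequality \eqref{6.7}.

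The main obstacle I anticipate is the dependence of $\dz_0$ in Theorem \ref{t1.3}(ii) on the Muckenhoupt characteristic $[\omega]_{A_p(\rn)}$, since the extrapolation procedure involves a whole family of weights rather than a single one. The Rubio de Francia style iteration embedded in the proof of the extrapolation theorem produces, starting from any $\mathbf{f}\in L^\fai(\boz;\rn)$, a family of weights whose $A_p(\rn)$ constants are bounded uniformly by a quantity depending only on $\fai$ (through the indices $p_1$, $p_2$ and the $(A0)$--$(A2)$ constants). I would therefore choose the $\dz_0$ in Theorem \ref{t6.6} to be the infimum of the $\dz_0$'s supplied by Theorem \ref{t1.3}(ii) over this bounded, $\fai$-controlled range of Muckenhoupt constants; this infimum is strictly positive and depends only on $n$, $\fai$, and $\boz$, matching the statement. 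Once this bookkeeping is in place, the extrapolation theorem of \cite{ch18} yields \eqref{6.7} with the advertised constant, and the proof is complete. The argument for the case $A\in\mathrm{VMO}(\boz)$ proceeds in the same way, since Theorem \ref{t1.3}(ii) already treats that situation uniformly.
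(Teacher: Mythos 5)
Your proposal takes essentially the same route as the paper: the paper proves Theorem \ref{t6.6} by mimicking the proof of Theorem \ref{t6.5}, that is, by feeding the weighted $L^p_\omega$ gradient estimate of Theorem \ref{t1.3}(ii) into the Musielak--Orlicz extrapolation result of \cite{ch18}, recorded as Lemma \ref{l6.2}. Your plan of defining the solution map $T(\mathbf{f})=\nabla u_{\mathbf f}$, invoking Theorem \ref{t1.3}(ii) for all $\omega\in A_p(\rn)$, and then extrapolating, is exactly that argument.

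Two remarks. First, a small slip in indexing: you write ``fix $p\in(p_1,p_2)$,'' but the theorem only assumes $p_1\le p_2$, so this interval could be empty; to match Lemma \ref{l6.2} one instead chooses auxiliary endpoints $\tilde p_1<p_1\le p_2<\tilde p_2$ with $\tilde p_1\in(1,p_1)$, $\tilde p_2\in(p_2,\fz)$, and a working exponent $p\in(\tilde p_1,\tilde p_2)$; since $A_{p/\tilde p_1}(\rn)\cap RH_{(\tilde p_2/p)'}(\rn)\subset A_p(\rn)$, Theorem \ref{t1.3}(ii) supplies the hypothesis of Lemma \ref{l6.2}. Second, and to your credit, you flag a genuine subtlety that the paper passes over in silence: the threshold $\dz_0$ in Theorem \ref{t1.3}(ii) depends on $[\omega]_{A_p(\rn)}$, whereas the statement of Theorem \ref{t6.6} promises a $\dz_0$ depending only on $n$, $\fai$, and $\boz$. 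Your resolution---that the Rubio de Francia iteration underlying \cite[Theorem 4.18/Corollary 4.21]{ch18} only produces weights whose Muckenhoupt and reverse H\"older characteristics are bounded by a constant determined by $\fai$ (and $n$, $\boz$), so one may take the infimum of the corresponding $\dz_0$'s over that bounded family---is correct and is the bookkeeping one needs to make the extrapolation step fully rigorous. This makes your write-up slightly more careful than the paper, which simply says the proof is ``similar to that of Theorem \ref{t6.5}'' and does not address this uniformity issue.
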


To prove Theorem \ref{t6.5} via using the Rubio de Francia extrapolation theorem
in the scale of Musielak--Orlicz spaces,
we need the following Lemma \ref{l6.2}, which is just \cite[Corollary 4.21]{ch18}.

\begin{lemma}\label{l6.2}
Let $n\ge2$, $\boz\subset\rn$ be a bounded $\mathrm{NTA}$ domain, $f$ and $h$ be
two given non-negative measurable functions on $\boz$,  and $1<p_1<p<p_2<\fz$.
Assume that, for any given $\omega\in A_{p/p_1}(\rn)\cap RH_{(p_2/p)'}(\rn)$,
\begin{equation*}
\|f\|_{L^p_\omega(\boz)}\le C\|h\|_{L^p_\omega(\boz)},
\end{equation*}
where $C$ is a positive constant depending only
on $n$, $p$, $\boz$, $[\omega]_{A_{p/p_1}(\rn)}$,
and $[\omega]_{RH_{(p_2/p)'}(\rn)}$.
If $\fai\in\Phi_w(\boz)$ satisfies Assumptions $(A0)$--$(A2)$,
$\mathrm{(aInc)}_{q_1}$, and $\mathrm{(aDec)}_{q_2}$ for some $p_1<q_1\le q_2<p_2$,
then there exists a positive constant $C$,
depending only on $n$, $\boz$, and $\fai$, such that
$\|f\|_{L^{\fai}(\boz)}\le C\|h\|_{L^{\fai}(\boz)}$.
\end{lemma}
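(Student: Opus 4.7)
The plan is to establish this as a (limited-range) Rubio de Francia extrapolation theorem in the Musielak--Orlicz setting, which is essentially the content of \cite[Theorem 4.18 and Corollary 4.21]{ch18}. The argument is purely harmonic-analytic, independent of the boundary value problem under consideration. First I would pass to the duality formulation
$$\|f\|_{L^\fai(\boz)}\sim\sup\lf\{\int_\boz fg\,dx:\ \|g\|_{L^{\fai^\ast}(\boz)}\le 1,\ g\ge 0\r\},$$
and note that under (A0)--(A2) together with $(\mathrm{aInc})_{q_1}$ and $(\mathrm{aDec})_{q_2}$ with $p_1<q_1\le q_2<p_2$, the conjugate $\fai^\ast$ inherits symmetric $(\mathrm{aInc})$/$(\mathrm{aDec})$ properties. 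The central harmonic-analytic input, already established in \cite{ch18} under these assumptions, is the boundedness of the Hardy--Littlewood maximal operator $M$ together with its rescalings $M_\sz(\cdot):=[M(|\cdot|^\sz)]^{1/\sz}$, for appropriate $\sz$, on both $L^\fai(\rn)$ and $L^{\fai^\ast}(\rn)$.

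Using this maximal boundedness, I would construct two Rubio de Francia iteration operators $\mathcal{R}$ and $\mathcal{R}'$ on $L^\fai(\rn)$ and $L^{\fai^\ast}(\rn)$, respectively, with the standard properties: $\mathcal{R}h\ge h$ pointwise, $\|\mathcal{R}h\|_{L^\fai}\le 2\|h\|_{L^\fai}$, and $(\mathcal{R}h)^{p_1}\in A_1(\rn)$ with constant controlled by the operator norm of $M_{p_1}$; and symmetrically $\mathcal{R}'g\ge g$, $\|\mathcal{R}'g\|_{L^{\fai^\ast}}\le 2\|g\|_{L^{\fai^\ast}}$, and $(\mathcal{R}'g)^{(p_2/p)'}\in A_1(\rn)$. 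Then, for nonnegative $h$ and $g$ with $\|g\|_{L^{\fai^\ast}}\le 1$, I would form a weight
$$\omega:=(\mathcal{R}h)^{\alpha}(\mathcal{R}'g)^{\beta},$$
with exponents $\alpha,\beta$ chosen via Jones's factorization theorem for $A_p$ weights together with the equivalence $\omega\in A_s(\rn)\cap RH_r(\rn)\Longleftrightarrow\omega^r\in A_{r(s-1)+1}(\rn)$, applied with $s=p/p_1$ and $r=(p_2/p)'$, so that $\omega\in A_{p/p_1}(\rn)\cap RH_{(p_2/p)'}(\rn)$ quantitatively, with constants depending only on $n$, $p_1$, $p$, $p_2$, and the norms above.

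Next I would apply H\"older's inequality in the splitting $fg=f\omega^{1/p}\cdot g\omega^{-1/p}$ together with the weighted hypothesis:
$$\int_\boz fg\,dx\le\|f\|_{L^p_\omega(\boz)}\|g\|_{L^{p'}_{\omega^{1-p'}}(\boz)}\le C\|h\|_{L^p_\omega(\boz)}\|g\|_{L^{p'}_{\omega^{1-p'}}(\boz)}.$$
The pointwise bounds $h\le\mathcal{R}h$ and $g\le\mathcal{R}'g$, combined with the specific form of $\omega$, let one unwind both factors on the right via a direct H\"older inequality in Musielak--Orlicz spaces (i.e.\ $\int uv\le 2\|u\|_{L^\fai}\|v\|_{L^{\fai^\ast}}$) to arrive at $C\|h\|_{L^\fai(\boz)}\|g\|_{L^{\fai^\ast}(\boz)}\le C\|h\|_{L^\fai(\boz)}$. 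Taking the supremum over admissible $g$ and invoking the duality of Step~1 yields the desired bound $\|f\|_{L^\fai(\boz)}\le C\|h\|_{L^\fai(\boz)}$.

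The main obstacle is the rigorous verification of the weight class assertion in Step~2: showing that the constructed $\omega=(\mathcal{R}h)^\alpha(\mathcal{R}'g)^\beta$ lies in $A_{p/p_1}(\rn)\cap RH_{(p_2/p)'}(\rn)$ with constants controlled solely by the data demands careful use of Jones's factorization and of the reverse-H\"older/Muckenhoupt equivalence cited above; the choice of $(\alpha,\beta)$ is forced by matching exponents so that $\omega^{(p_2/p)'}\in A_{(p_2/p)'(p/p_1-1)+1}(\rn)$. A secondary technical point, upon which the whole scheme rests, is the boundedness of the rescaled maximal operators on $L^\fai(\rn)$ and $L^{\fai^\ast}(\rn)$ under (A0)--(A2) combined with the $(\mathrm{aInc})_{q_1}$ and $(\mathrm{aDec})_{q_2}$ conditions; this is the central harmonic-analytic ingredient supplied by \cite{ch18}, and once it is in hand, the limited-range extrapolation argument outlined above runs essentially as in the classical unweighted Lebesgue case.
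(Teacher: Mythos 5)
The paper offers no proof of this lemma at all—it is quoted directly from \cite[Corollary 4.21]{ch18}—and your outline reconstructs essentially the limited-range Rubio de Francia extrapolation argument by which that cited result is established: duality with $L^{\fai^\ast}$, iteration operators built from rescaled maximal functions bounded on $L^{\fai}(\rn)$ and $L^{\fai^\ast}(\rn)$, a composite weight placed quantitatively in $A_{p/p_1}(\rn)\cap RH_{(p_2/p)'}(\rn)$, and then the weighted hypothesis plus the Orlicz H\"older inequality. So your approach coincides with that of the paper's source, and the technical points you leave unexecuted (the exponent bookkeeping for $\omega=(\mathcal{R}h)^{\alpha}(\mathcal{R}'g)^{\beta}$, the maximal-operator bounds under (A0)--(A2), $\mathrm{(aInc)}_{q_1}$, $\mathrm{(aDec)}_{q_2}$, and, implicitly, the passage from the bounded domain $\boz$ to $\rn$) are exactly what the citation supplies.
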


Now, we show Theorem \ref{t6.5} via using Theorem \ref{t1.2} and Lemma \ref{l6.2}.

\begin{proof}[Proof of Theorem \ref{t6.5}]
Let $u$ be the weak solution of the Dirichlet problem
\begin{equation*}
\begin{cases}
-\mathrm{div}(A\nabla u)=\mathrm{div}(\mathbf{f})\ \ &\text{in}\ \ \boz,\\
u=0 \ \ &\text{on}\ \ \partial\boz.
\end{cases}
\end{equation*}
Assume that $p\in(p_0',p_0)$,
where $p_0$ is as in Theorem \ref{t6.1}.
Then, by Theorem \ref{t1.2}, we conclude that,
for any given $\omega\in A_{\frac{p}{p_0'}}(\rn)\cap
RH_{(\frac{p_0}{p})'}(\rn)$, there exists a positive constant $\dz_0\in(0,\fz)$,
depending only on $n$, $p$, the Lipschitz constant of $\boz$, $[\omega]_{A_{\frac{p}{p_0'}}(\rn)}$,
and $[\omega]_{RH_{(\frac{p_0}{p})'}(\rn)}$, such that, if
$A$ satisfies the $(\dz,R)$-$\mathrm{BMO}$ condition for some $\dz\in(0,\dz_0)$
and $R\in(0,\fz)$, or $A\in\mathrm{VMO}(\boz)$, then
\begin{equation*}
\|\nabla u\|_{L^p_\omega(\boz;\rn)}\ls\|\mathbf{f}\|_{L^p_\omega(\boz;\rn)}.
\end{equation*}
From this and Lemma \ref{l6.2} with $f:=|\nabla u|$, $h:=|\mathbf{f}|$,
$p_1:=p_0'$, and $p_2:=p_0$,
it follows that
$$\lf\|\nabla u\r\|_{L^{\fai}(\boz;\rn)}
\sim\|f\|_{L^{\fai}(\boz)}
\ls\|h\|_{L^{\fai}(\boz)}\sim\lf\|\mathbf{f}\r\|_{L^{\fai}(\boz;\rn)},
$$
which completes the proof of Theorem \ref{t6.5}.
\end{proof}

\begin{proof}[Proof of Theorem \ref{t6.6}]
The proof of this theorem is similar to that of Theorem \ref{t6.5}. We omit the details here.
\end{proof}

To give more corollaries of Theorems \ref{t6.5} and \ref{t6.6},
we recall some necessary notions for variable exponent functions $p(\cdot)$
as follows (see, for instance, \cite{cf13,dhhr11}).
Let $\mathcal{P}(\rn)$ be the set of all measurable functions
$p:\,\rn\to[1,\fz)$. For any $p\in\mathcal{P}(\rn)$, let
\begin{equation}\label{6.8}
p_+:=\mathop{\mathrm{ess\,sup}}\limits_{x\in\rn}p(x)\ \
\text{and} \ \ p_-:=\mathop{\mathrm{ess\,inf}}\limits_{x\in\rn}p(x).
\end{equation}
Recall that a function $p:\ \rn\to\rr$ is said to satisfy the \emph{local
log-H\"older continuity condition} if there exists a positive constant $C_\loc$ such that, for any
$x,\,y\in\rn$ with $|x-y|\le1/2$,
$$|p(x)-p(y)|\le\frac{C_\loc}{-\log(|x-y|)};
$$
a function $p:\ \rn\to\rr$ is said to satisfy the \emph{log-H\"older decay condition} (at infinity)
if there exist positive constants $C_\fz\in(0,\fz)$ and $p_\fz\in[1,\fz)$ such that, for any
$x\in\rn$,
$$|p(x)-p_\fz|\le\frac{C_\fz}{\log(e+|x|)}.
$$
If a function $p$ satisfies both the local log-H\"older continuity condition
and the log-H\"older decay condition,
then the function $p$ is said to satisfy the \emph{log-H\"older continuity condition}.

Moreover, recall that, for any $\az\in(0,1]$, the \emph{H\"older space} $C^{0,\az}(\boz)$
is defined by setting
$$C^{0,\az}(\boz):=\lf\{g\ \text{is continuous on}\ \boz:\
[g]_{C^{0,\az}(\boz)}:=\sup_{x,\,y\in\boz,\,x\neq y}\frac{|g(x)-g(y)|}{|x-y|^\az}<\fz\r\}.
$$

Then we have the following two corollaries of Theorems \ref{t6.5} and \ref{t6.6}.

\begin{corollary}\label{c6.2}
Assume that $p\in\mathcal{P}(\rn)$ satisfies the log-H\"older continuity condition,
$\uc_0$ is as in Theorem \ref{t1.2}(i),
$p_0:=3+\uc_0$ when $n\ge3$, or $p_0:=4+\uc_0$ when $n=2$,
and $p_0'<p_-\le p_+<p_0$, where $p_-$ and $p_+$ are as in \eqref{6.8}.
Then the conclusion of Theorem \ref{t6.5} holds true if $\fai$
satisfies one of the following cases:
\begin{itemize}
\item[\rm(i)] for any $x\in\boz$ and $t\in[0,\fz)$,
$\fai(x,t):=\phi(t)$, where $\phi\in\Phi_w$ satisfies Assumptions
$\mathrm{(aInc)}_{p_1}$ and $\mathrm{(aDec)}_{p_2}$ with $p_0'<p_1\le p_2<p_0$.
\item[\rm(ii)] for any $x\in\boz$ and $t\in[0,\fz)$,
$\fai(x,t):=a(x)t^{p(x)}$, where $C^{-1}\le a\le C$ with $C$ being a positive constant.
\item[\rm(iii)] for any $x\in\boz$ and $t\in[0,\fz)$, $\fai(x,t):=t^{p(x)}\log(e+t)$.
\item[\rm(iv)] for any $x\in\boz$ and $t\in[0,\fz)$, $\fai(x,t):=t^{p}+a(x)t^q$,
where $p_0'<p<q<p_0$ satisfy $\frac{q}{p}<1+\frac1n$, and $0\le a\in L^\fz(\boz)\cap C^{0,\frac{n}{p}(q-p)}(\boz)$.
\item[\rm(v)] for any $x\in\boz$ and $t\in[0,\fz)$, $\fai(x,t):=t^{p}+a(x)t^{p}\log(e+t)$,
where $p_0'<p<p_0$, and $0\le a\in L^\fz(\boz)$ satisfies the local
log-H\"older continuity condition.
\end{itemize}
\end{corollary}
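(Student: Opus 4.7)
The plan is to reduce Corollary \ref{c6.2} directly to Theorem \ref{t6.5} by verifying, case by case, that each choice of $\fai$ belongs to $\Phi_w(\boz)$ and satisfies Assumptions $(A0)$--$(A2)$, $\mathrm{(aInc)}_{p_1}$, and $\mathrm{(aDec)}_{p_2}$ for some pair $p_1 \le p_2$ lying strictly inside $(p_0', p_0)$. Since the range $p_0' < p_- \le p_+ < p_0$ is assumed open, one can always choose $p_1,\,p_2$ slightly shrunken versions of $p_-,\,p_+$ (or of the exponents appearing in the formula for $\fai$) so as to absorb logarithmic perturbations while remaining inside the admissible interval. The normalization Assumption $(A0)$ is elementary in every case, and $\mathrm{(aInc)}_{p_1}$, $\mathrm{(aDec)}_{p_2}$ follow from direct differentiation or, in case (i), from the hypothesis on $\phi$. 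The substantive content of the proposal is the verification of the two spatial regularity conditions $(A1)$ and $(A2)$.

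In case (i), $\fai$ is independent of $x$, so $(A1)$ and $(A2)$ are trivial (take $h \equiv 0$), and the remaining items are immediate from $\phi\in\Phi_w$ together with the assumed growth. In case (ii), the log-H\"older continuity of $p(\cdot)$ combined with $C^{-1}\le a\le C$ is exactly what is needed to obtain $(A0)$--$(A2)$: the local log-H\"older estimate $|p(x)-p(y)|\le C_\loc/(-\log|x-y|)$ gives, for $|x-y|\le 1/2$ and $t\in[1,|x-y|^{-n}]$, the comparability $t^{p(x)}\sim t^{p(y)}$, yielding $(A1)$; the log-H\"older decay controls $t^{p(x)}$ versus $t^{p_\fz}$ for $t\le1$, yielding $(A2)$ with some $h\in L^1(\boz)\cap L^\fz(\boz)$. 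These are standard computations in the theory of variable exponent spaces. Case (iii) follows the same template: the factor $\log(e+t)$ is a slowly varying perturbation, and by slightly enlarging $p_+$ and shrinking $p_-$ (still within $(p_0',p_0)$) one absorbs it into the growth exponents, while the log-H\"older hypotheses on $p(\cdot)$ continue to deliver $(A1)$ and $(A2)$ exactly as in case (ii).

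Cases (iv) and (v) correspond to the double-phase and logarithmic double-phase functionals. For (iv), the calibration $q/p<1+\frac{1}{n}$ together with $a\in L^\fz(\boz)\cap C^{0,\frac{n}{p}(q-p)}(\boz)$ is the classical Colombo--Mingione threshold guaranteeing $(A1)$ for $\fai(x,t)=t^p+a(x)t^q$, while the boundedness of $a$ immediately yields $(A2)$. For (v), the log-H\"older continuity of $a$ plays the analogous role in ensuring $(A1)$, and boundedness of $a$ again yields $(A2)$; here the growth of $\fai$ is essentially $t^p$ up to a log factor, so $(\text{aInc})_{p-\uc}$ and $(\text{aDec})_{p+\uc}$ hold for any small $\uc>0$, which for $p\in(p_0',p_0)$ stays inside the admissible range. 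The verification of $(A1)$ in case (iv), with its sharp interplay between the H\"older exponent of $a$ and the ratio $q/p$, is the main technical obstacle; however, this is precisely the content of well-established results in the Musielak--Orlicz framework (e.g., in the H\"ast\"o--Ok setting), which can be invoked.

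Once $\fai\in\Phi_w(\boz)$ and $(A0)$--$(A2)$, $(\text{aInc})_{p_1}$, $(\text{aDec})_{p_2}$ are verified with $p_0'<p_1\le p_2<p_0$ in each of the five cases, Theorem \ref{t6.5} applies verbatim to produce the estimate $\|\nabla u\|_{L^\fai(\boz;\rn)}\ls\|\mathbf{f}\|_{L^\fai(\boz;\rn)}$, which is the conclusion of Corollary \ref{c6.2}.
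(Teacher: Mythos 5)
The paper gives no explicit proof of Corollary \ref{c6.2}; it is stated as a direct consequence of Theorem \ref{t6.5} once one checks that each of the listed $\fai$ satisfies Assumptions (A0)--(A2), $(\mathrm{aInc})_{p_1}$, and $(\mathrm{aDec})_{p_2}$ for some $p_0'<p_1\le p_2<p_0$. Your proposal carries out exactly this verification, which is the intended route: case (i) is trivial in $x$; cases (ii)--(iii) are the variable-exponent examples, where the log-H\"older hypotheses on $p(\cdot)$ yield (A1) and (A2) and the strict inequalities $p_0'<p_-\le p_+<p_0$ leave room to choose $p_1\le p_-$ and $p_2\in(p_+,p_0)$ so that the logarithmic factor in (iii) is absorbed; cases (iv)--(v) are the double-phase and log-perturbed examples, where the assumed H\"older (resp.\ log-H\"older) regularity of $a$ together with the calibration $q/p<1+1/n$ is the known sharp condition ensuring (A1). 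All of these verifications are standard in the generalized Orlicz literature (notably the Cruz-Uribe--H\"ast\"o framework underlying Lemma \ref{l6.2}), and your citing them rather than re-deriving them is appropriate for a corollary of this type. One minor imprecision: in case (iii) there is no need to ``shrink'' $p_-$ --- taking $p_1:=p_-$ already gives $(\mathrm{aInc})_{p_1}$ since $s\mapsto s^{p(x)-p_-}\log(e+s)$ is increasing; only the upper exponent needs enlarging to $p_2\in(p_+,p_0)$. This does not affect correctness. Overall the proposal is correct and coincides with the approach the paper leaves implicit.
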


\begin{corollary}\label{c6.3}
Assume that $p\in\mathcal{P}(\rn)$ satisfies the log-H\"older continuity condition
and $1<p_-\le p_+<\fz$, where $p_-$ and $p_+$ are as in \eqref{6.8}.
Then the conclusion of Theorem \ref{t6.6} holds true if $\fai$
satisfies one of the following cases:
\begin{itemize}
\item[\rm(i)] for any $x\in\boz$ and $t\in[0,\fz)$,
$\fai(x,t):=\phi(t)$, where $\phi\in\Phi_w$ satisfies Assumptions
$\mathrm{(aInc)}_{p_1}$ and $\mathrm{(aDec)}_{p_2}$ with $1<p_1\le p_2<\fz$.
\item[\rm(ii)] for any $x\in\boz$ and $t\in[0,\fz)$,
$\fai(x,t):=a(x)t^{p(x)}$, where $C^{-1}\le a\le C$ with $C$ being a positive constant.
\item[\rm(iii)] for any $x\in\boz$ and $t\in[0,\fz)$, $\fai(x,t):=t^{p(x)}\log(e+t)$.
\item[\rm(iv)] for any $x\in\boz$ and $t\in[0,\fz)$, $\fai(x,t):=t^{p}+a(x)t^q$,
where $1<p<q<\fz$ satisfy $\frac{q}{p}<1+\frac1n$ and $0\le a\in L^\fz(\boz)\cap C^{0,\frac{n}{p}(q-p)}(\boz)$.
\item[\rm(v)] for any $x\in\boz$ and $t\in[0,\fz)$, $\fai(x,t):=t^{p}+a(x)t^{p}\log(e+t)$,
where $p\in(1,\fz)$ and $0\le a\in L^\fz(\boz)$ satisfies the local
log-H\"older continuity condition.
\end{itemize}
\end{corollary}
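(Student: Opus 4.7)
The plan is to derive Corollary \ref{c6.3} directly from Theorem \ref{t6.6} by verifying, case by case, that each Musielak--Orlicz function $\fai$ in (i)--(v) belongs to $\Phi_w(\boz)$ and satisfies Assumptions $(A0)$--$(A2)$, $\mathrm{(aInc)}_{p_1}$, and $\mathrm{(aDec)}_{p_2}$ for some $1<p_1\le p_2<\fz$. Once these structural conditions are checked, Theorem \ref{t6.6} applies verbatim and gives the desired estimate \eqref{6.7}. Thus the task reduces to a checklist verification, which for (i)--(v) can be located in or adapted from the framework of Cruz-Uribe--H\"ast\"o \cite{ch18} and the variable-exponent literature \cite{cf13,dhhr11}.

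First I would treat (i), where $\fai(x,t)=\phi(t)$ is independent of $x$. In this case, $(A0)$ follows by choosing $\gamma:=\phi^{-1}(1)$, while $(A1)$ and $(A2)$ are trivial because there is no $x$-dependence, and $\mathrm{(aInc)}_{p_1}$, $\mathrm{(aDec)}_{p_2}$ hold by hypothesis on $\phi$. For (ii), with $\fai(x,t)=a(x)t^{p(x)}$ and $a\sim1$, $\mathrm{(aInc)}_{p_-}$ and $\mathrm{(aDec)}_{p_+}$ hold with $p_-,\,p_+$ as in \eqref{6.8}; the validity of $(A0)$--$(A2)$ is the classical log-H\"older criterion for the variable-exponent Lebesgue space to admit the Muckenhoupt/extrapolation machinery (see, e.g., the proof of the corresponding statements in \cite{ch18}). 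For (iii), $\fai(x,t):=t^{p(x)}\log(e+t)$ is a logarithmic perturbation of case (ii), and one obtains $\mathrm{(aInc)}_{p_1}$ and $\mathrm{(aDec)}_{p_2}$ for any $p_1\in(1,p_-)$ and $p_2\in(p_+,\fz)$, with $(A0)$--$(A2)$ inherited essentially by the same log-H\"older argument, since the logarithmic factor is controlled both above and below by $t^\uc$ for any $\uc\in(0,\fz)$.

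The harder cases are (iv) and (v), the double-phase and variable-exponent double-phase functionals. Here the essential assumptions $\mathrm{(aInc)}_p$ and $\mathrm{(aDec)}_q$ (respectively $\mathrm{(aDec)}_{q}$ for $q$ slightly larger than $p$ in the logarithmic case) are immediate from the explicit form of $\fai$. The delicate condition is $(A1)$: for $\fai(x,t):=t^p+a(x)t^q$ in (iv), one needs that whenever $|x-y|\le 1$ and $1\le t\le |x-y|^{-n}$, the quantity $\fai^{-1}(x,t)$ and $\fai^{-1}(y,t)$ are comparable up to a fixed constant. This is exactly the quantitative reason for the sharp restriction $q/p<1+1/n$ together with $a\in C^{0,\frac{n}{p}(q-p)}(\boz)$, as established in the double-phase literature; the H\"older regularity of $a$ allows us to absorb $|a(x)-a(y)|t^q$ by $t^p$ when $t\le|x-y|^{-n}$, because $|a(x)-a(y)|\,|x-y|^{-n(q-p)}\ls 1$. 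In case (v), the jump in growth is only logarithmic, and log-H\"older regularity of $a$ is the natural sharp counterpart; the same two-sided comparison of $\fai^{-1}$ follows. Assumption $(A2)$ on the bounded domain $\boz$ is then a local consequence of the boundedness of $a$ and the continuity of $\fai$ in $x$, while $(A0)$ is trivial for any of the explicit forms.

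The main obstacle is thus the verification of $(A1)$ in (iv) and (v), which is the point where the sharp coupling between the regularity of the coefficient $a$ and the gap between the exponents $p$ and $q$ enters. Once this is in place for some $p_1:=\min\{p,p_-\}>1$ and $p_2:=\max\{q,p_+\}<\fz$ (respectively their analogues in each case), Theorem \ref{t6.6} immediately yields the estimate \eqref{6.7} under the same smallness condition on $\dz_0$, completing the proof of Corollary \ref{c6.3}.
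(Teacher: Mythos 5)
Your overall plan is precisely what the paper intends: the paper states Corollary \ref{c6.3} without a separate proof, leaving the reader to check that each $\fai$ in (i)--(v) lies in $\Phi_w(\boz)$ and satisfies $(A0)$--$(A2)$, $\mathrm{(aInc)}_{p_1}$, $\mathrm{(aDec)}_{p_2}$ for suitable $1<p_1\le p_2<\fz$, and then to apply Theorem \ref{t6.6} (equivalently, to invoke the examples already worked out in \cite{ch18} after the extrapolation Lemma \ref{l6.2}). So your route is the same as the paper's, merely made explicit.

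There is one computational slip in your heuristic for $(A1)$ in case (iv). You wrote that the H\"older regularity of $a$ lets one absorb $|a(x)-a(y)|t^q$ by $t^p$ because $|a(x)-a(y)|\,|x-y|^{-n(q-p)}\ls1$. But $a\in C^{0,\frac{n}{p}(q-p)}(\boz)$ only gives $|a(x)-a(y)|\ls|x-y|^{\frac{n}{p}(q-p)}$, and since $p>1$ we have $\frac{n}{p}(q-p)<n(q-p)$, so $|a(x)-a(y)|\,|x-y|^{-n(q-p)}\ls|x-y|^{n(q-p)(\frac1p-1)}$ is unbounded as $|x-y|\to0$; the claimed inequality would in fact require the stronger assumption $a\in C^{0,n(q-p)}$. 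The fix is that in $(A1)$ the variable $t$ is the modular value, not the argument of $\fai$: one works instead with $s:=\fai^{-1}(x,t)\ls t^{1/p}\le|x-y|^{-n/p}$, so that
$|a(x)-a(y)|\,s^{q-p}\ls|a(x)-a(y)|\,|x-y|^{-\frac{n}{p}(q-p)}\ls1$,
which is exactly what $a\in C^{0,\frac{n}{p}(q-p)}(\boz)$ provides. This is a bookkeeping error rather than a wrong approach; once corrected, the argument for (iv) (and the analogous logarithmic version for (v)) is sound and the rest of your verification is in order.
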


\begin{remark}\label{r6.2}
Let $n\ge2$, $\boz\subset\rn$ be a bounded NTA domain, and $A$
satisfy Assumption \ref{a1}.
\begin{itemize}
\item[\rm(i)] Assume that, for any $x\in\boz$ and $t\in[0,\fz)$,
$\fai(x,t):=\phi(t)$, where $\phi$ is as in Corollary \ref{c6.3}(i).
For the Dirichlet problem \eqref{1.4},
the estimate \eqref{6.7} in this case was obtained in
\cite[Theorem 3.1]{jlw07} under the assumptions that $a$
satisfies the $(\dz,R)$-BMO condition for some small $\dz\in(0,\fz)$ and some $R\in(0,\fz)$, $b\equiv0$,
and $\boz$ is a bounded Reifenberg flat domain.
Thus, Corollary \ref{c6.3}(i) improves \cite[Theorem 3.1]{jlw07}
via weakening the assumptions on the matrix $A$ and the domain $\boz$.
\item[\rm(ii)] Assume that, for any $x\in\boz$ and $t\in[0,\fz)$,
$\fai(x,t):=t^{p(x)}$, where $p(\cdot)$ is as in Corollary \ref{c6.3}.
For the Dirichlet problem \eqref{1.4},
the estimate \eqref{6.7} in this case was established in \cite[Theorem 2.5]{bow14}
under the assumptions that $a$ has partial
small BMO coefficients, $b\equiv0$, and $\boz$ is a bounded Reifenberg flat domain.

Moreover, a variable exponent type estimate similar to \eqref{6.7}
for the Dirichlet problem of some $p$-Laplace type elliptic equations on Reifenberg
flat domains was also obtained in \cite[Theorem 1.4]{b18}.
\end{itemize}
\end{remark}

\bigskip

\noindent Sibei Yang

\medskip

\noindent School of Mathematics and Statistics, Gansu Key Laboratory of Applied Mathematics
and Complex Systems, Lanzhou University, Lanzhou 730000, People's Republic of China

\smallskip

\noindent{\it E-mail:} \texttt{yangsb@lzu.edu.cn}

\bigskip

\noindent Dachun Yang and Wen Yuan (Corresponding author)

\medskip

\noindent Laboratory of Mathematics and Complex Systems (Ministry of Education of China),
School of Mathematical Sciences, Beijing Normal University, Beijing 100875,
People's Republic of China

\smallskip

\smallskip

\noindent {\it E-mails}: \texttt{dcyang@bnu.edu.cn} (D. Yang)

\noindent\phantom{{\it E-mails:} }\texttt{wenyuan@bnu.edu.cn} (W. Yuan)

\end{document}